\newtheorem{thm}{Theorem}[section]
\newtheorem{lemma}{Lemma}[section]
\theoremstyle{definition}
\newtheorem{define}{Definition}[section]
\theoremstyle{remark}
\newtheorem{rem}{Remark}[section]
\numberwithin{equation}{section}
\begin{document}
\bigskip

\centerline{\Large\bf  Global regularity of 2D generalized incompressible}

\smallskip
\centerline{\Large\bf  magnetohydrodynamic  equations}

\bigskip

\centerline{Chao Deng$^{1}$, Zhuan Ye$^{1}$, Baoquan Yuan$^{2}$, Jiefeng Zhao$^{2}$}

\bigskip

\centerline{$^{1}$ Department of Mathematics and Statistics, Jiangsu Normal University, }
\medskip

\centerline{101 Shanghai Road, Xuzhou 221116, Jiangsu, PR China}

\medskip
\medskip
\centerline{$^{2}$ School of Mathematics and Information Science, Henan Polytechnic University,}
\medskip

\centerline{Jiaozuo 454003, PR China}

\medskip
\medskip
\centerline{E-mails: \texttt{dengxznu@gmail.com; yezhuan815@126.com
}}
\centerline{  \texttt{ bqyuan@hpu.edu.cn; zhaojiefeng003@hpu.edu.cn
}}

\bigskip
\bigskip
{\bf Abstract:}~~%
In this paper, we are concerned with the two-dimensional (2D) incompressible magnetohydrodynamic (MHD) equations with velocity dissipation given by $(-\Delta)^{\alpha}$ and magnetic diffusion given by reducing about logarithmic diffusion from standard Laplacian diffusion. More precisely, we establish the global regularity of solutions to the system as long as the power $\alpha$ is a positive constant. In addition, we prove several global \emph{a priori} bounds for the case $\alpha=0$. In particular, our results significantly improve previous
works and take us one step closer to a complete resolution of the global regularity issue on the 2D resistive MHD equations, namely, the case when the MHD equations only have standard Laplacian magnetic diffusion.

{\vskip 1mm
 {\bf AMS Subject Classification 2010:}\quad 35Q35; 35B65; 76W05; 76D03.

 {\bf Keywords:}
Generalized MHD equations; Global regularity; Logarithmic dissipation.}

\vskip .4in
\section{Introduction and main results}
The 2D generalized incompressible magnetohydrodynamic (GMHD) equations take the following form
\begin{equation}\label{ADGMHD}
\left\{\aligned
&\partial_{t}u+(u\cdot\nabla)u+\nu_{1}\mathcal{L}_{1}u+\nabla p=(b\cdot\nabla) b,\qquad x \in \mathbb{R}^{2},\, t>0,\\
&\partial_{t}b+(u\cdot\nabla)b+\nu_{2}\mathcal{L}_{2}b=(b\cdot\nabla)u,
\\
&\nabla\cdot u=0,\ \ \ \nabla\cdot b=0,\\
&u(x,0)=u_{0}(x),\,\,b(x,0)=b_{0}(x),
\endaligned \right.
\end{equation}
where $u=(u_{1}(x,t),\,u_{2}(x,t))$ denotes the velocity, $p=p(x,t)$ the scalar pressure and
$b=(b_{1}(x,t),\,b_{2}(x,t))$ the magnetic field of the fluid.
$u_{0}(x)$ and $b_{0}(x)$ are the given initial data satisfying $\nabla\cdot u_{0}=\nabla\cdot b_{0}=0$. Here $\nu_{1}$ and $\nu_{2}$ are nonnegative constants. The operators $\mathcal{L}_{i}$ ($i=1,2$) are Fourier multipliers with symbol $m_{i}(\xi)$, namely,
$$\widehat{\mathcal{L}_{i}f}(\xi)=m_{i}(\xi)\widehat{f}(\xi).$$
When $\nu_{1}>0,\,\nu_{2}>0$ and $m_{1}(\xi)=m_{2}(\xi)=|\xi|^{2}$, \eqref{ADGMHD} reduces to the classical MHD equations, which model the complex interaction between the fluid dynamic phenomena, such as the magnetic reconnection in astrophysics and geomagnetic dynamo in geophysics, plasmas, liquid metals, and salt water, etc (see, e.g., \cite{Davidson01,PF}).
The fundamental concept behind MHD is that magnetic fields can induce currents in a moving conductive fluid, which in turn creates forces on the fluid and also changes the magnetic field itself.

\vskip .1in
Due to the physical background and mathematical relevance, the MHD equations  attracted a lot of attention. The most fundamental problem concerning the MHD equations is whether physically relevant regular solutions remain smooth for all time or they develop finite time singularities with general initial data.
Actually, there have been substantial developments on the global regularity of the MHD equations with various form of the dissipation. Obviously, the classical 2D MHD equations have a unique global smooth solution (e.g. \cite{ST}). In the completely inviscid case, namely, $\nu_{1}=\nu_{2}=0$, the question of whether the local smooth solution develops singularity in finite time remains open. Therefore, it is interesting to consider the intermediate cases. Actually, one of the most interesting intermediate case reads $m_{1}(\xi)=|\xi|^{2\alpha},\,m_{2}(\xi)=|\xi|^{2\beta}$, which becomes the so-called 2D fractional MHD equations, namely,
\begin{equation}\label{2dGMHD}
\left\{\aligned
&\partial_{t}u+(u\cdot\nabla)u+(-\Delta)^{\alpha}u+\nabla p=(b\cdot\nabla) b,\qquad x \in \mathbb{R}^{2},\, t>0,\\
&\partial_{t}b+(u\cdot\nabla)b+(-\Delta)^{\beta}b=(b\cdot\nabla)u,
\\
&\nabla\cdot u=0,\ \ \ \nabla\cdot b=0,\\
&u(x,0)=u_{0}(x),\ \ b(x,0)=b_{0}(x),
\endaligned \right.
\end{equation}
where the fractional operator $(-\Delta)^{\gamma}$ is defined by the Fourier transform, namely
$$
\widehat{(-\Delta)^{\gamma} f}(\xi)=|\xi|^{2\gamma}\hat{f}(\xi).
$$
For simplicity, we denote  $\Lambda\triangleq\sqrt{-\Delta}$.
We remark the convention that by $\alpha=0$ we mean that there is no dissipation in $(\ref{2dGMHD})_{1}$, and similarly $\beta=0$ represents that there is no diffusion in $(\ref{2dGMHD})_{2}$. Recently, a number of works have been dedicated to the study of the global regularity of \eqref{2dGMHD} with general initial data (see e.g. \cite{CWYSiam14,FNZ14MM,JZ31114,JZ31115,TYZ113,TYZ,Wu2003,Wu2011,YB2014JMAA,Y3efg5,YX2014NA} and the references cited therein). To the best of our knowledge, the issue of the global regularity for \eqref{2dGMHD} with $\alpha=0,\beta=1$ is still a challenging open problem (see \cite{CW2011,LZ}). Interestingly, if more dissipation is added, then the corresponding system do admit a unique global regular solution. On the one hand, when $\alpha>0,\beta=1$, the global regularity issue was solved by Fan et al. \cite{FNZ14MM}, which was further improved by Yuan and Zhao \cite{YZhao16} logarithmically. Precisely, they obtained the global regularity of solutions requiring the dissipative operators weaker than any power of the fractional Laplacian. On the other hand, when $\alpha=0,\beta>1$, the global regularity of smooth solutions for the corresponding system was established in \cite{CWYSiam14,JZ31115} with different approaches, which was also improved by \cite{Agelas,Yejee18} logarithmically. Partially inspired by the result on logarithmically supercritical phenomena for Navier-Stokes equations \cite{TTao}, Wu \cite{Wu2011} proved that if $m_{1}(\xi)\geq \frac{|\xi|^{2\alpha}}{g_{1}(\xi)}$, $m_{2}(\xi)\geq \frac{|\xi|^{2\beta}}{g_{2}(\xi)}$ with $g_{1}\geq1, g_{2}\geq1$  are nondecreasing functions satisfying
$$\int_{1}^{\infty}\frac{1}{\tau[g_{1}(\tau)+g_{2}(\tau)]^{2}}\,d\tau=+\infty,$$
where $\alpha\geq1,\beta>0$ and $\alpha+\beta\geq2$, then \eqref{ADGMHD} has a unique global classical solution (see the further improved result on a periodic domain \cite{Yamazaki18}). Subsequently, Tran et al. \cite{TYZ} considered the case $\alpha\geq2,\,\beta=0$ and proved that if $m_{1}(\xi)\geq \frac{|\xi|^{2\alpha}}{g_{1}(\xi)}$ and $g_{1}\geq1$ is a radial function satisfying $g_{1}(\tau)\leq C\ln (e+\tau)$, then the solution is global regular.
This result was later improved by Yamazaki \cite{Yasd14a} only requiring
$$\int_{e}^{\infty}\frac{1}{\tau \ln\tau g_{1}(\tau)}\,d\tau=+\infty.$$

\vskip .1in
Summarizing the results mentioned above, the direct energy method tells us that to get an $H^{1}$-bound for the solution, we just only have Laplacian magnetic diffusion $\alpha=0,\beta\geq1$ or we ask for $\alpha+\beta\geq2,\,\beta<1$ (see \cite{Wu2003,Wu2011,TYZ} for details). If $\beta<1$, the necessary power $\alpha$ jumps from $0$ to $2-\beta$ immediately! This indicates that it is extremely difficult to establish the global regularity of \eqref{2dGMHD} with $\alpha+\beta<2$ and $\beta<1$. Actually, up to date, the global existence and uniqueness of regular solution remain completely open for this case $\alpha+\beta<2$ and $\beta<1$ with large initial data. Just as Tao \cite{TTao}, since it is very difficult to obtain the global regularity for this case, it is possible to achieve this goal when the magnetic diffusion is some logarithmically weaker than a full Laplacian. In fact, we are able to show that if the magnetic diffusion is some logarithmically weaker than a full Laplacian, then the global $H^{1}$-bound of the solution can be achieved as long as the fractional dissipation power of the velocity field is positive. More precisely, this paper is devoted to the global regularity problem of the following 2D incompressible GMHD equations
\begin{equation}\label{GMHD}
\left\{\aligned
&\partial_{t}u+(u\cdot\nabla)u+(-\Delta)^{\alpha}u+\nabla p=(b\cdot\nabla) b,\qquad x \in \mathbb{R}^{2},\, t>0,\\
&\partial_{t}b+(u\cdot\nabla)b+\mathcal{L}b=(b\cdot\nabla)u,
\\
&\nabla\cdot u=0,\ \ \ \nabla\cdot b=0,\\
&u(x,0)=u_{0}(x),\,\,b(x,0)=b_{0}(x),
\endaligned \right.
\end{equation}
where the operator $\mathcal{L}$ is a Fourier multiplier with symbol $\frac{|\xi|^{2}}{g(\xi)}$, namely,
$$\widehat{\mathcal{L}b}(\xi)=\frac{|\xi|^{2}}{g(\xi)}\widehat{b}(\xi),$$
with $g(\xi)=g(|\xi|)$ a radial non-decreasing smooth function satisfying the following two conditions

(a)\,\, $g$ obeys that
$$g(\xi)\geq C_{0}>0 \quad \mbox{for all}\ \ \xi\geq0;$$

(b)\,\, $g$ is of the Mikhlin-H$\rm\ddot{o}$mander type, namely, a constant $\widetilde{C}>0$ such that
$$|\partial_{\xi}^{l}g(\xi)|\leq \widetilde{C}|\xi|^{-l}|g(\xi)|,\quad l\in \{1,\,2 \},\  \forall\,\xi\neq0.$$

Here we point out that Zhao \cite{Zhao22} recently proved the global regularity result for \eqref{GMHD} as long as $\alpha>\frac{1}{4}$ and $g$ fulfills
$$g(\xi)\approx \ln\ln(\sigma+|\xi|)\ln\ln\ln(\sigma+|\xi|)\cdot\cdot\cdot
\underbrace{\ln\ln\cdot\cdot\cdot\ln}_{k\,\rm{times}}(\sigma+|\xi|)$$
with $\sigma \geq \underbrace{\exp \exp \cdots \exp}_{k\,\rm{times}}\triangleq \sigma(k)$. Here and in what follows, $f\approx g$ means $C_{1}g \leq f\leq C_{2} g$ for some absolute constant $C_{2}\geq C_{1}>0$.
As the restrictions in \cite{Zhao22} are somewhat unnatural, in particular the condition $\alpha>\frac{1}{4}$, the main goal of this paper is to weaken the conditions required in \cite{Zhao22} from two aspects: $\alpha$ and $g$, but the same global regularity result is still guaranteed. More precisely, our main result reads as follows.
\begin{thm}\label{Th1} Let $(u_{0}, b_{0})
\in H^{s}(\mathbb{R}^{2})\times H^{s}(\mathbb{R}^{2})$ with $s\geq2$ and satisfy $\nabla\cdot u_{0}=\nabla\cdot b_{0}=0$. Assume that $g$ satisfies (a)-(b) and
\begin{equation}\label{logcd1}
g(\xi)\leq \widetilde{C}\left(\ln(\sigma+|\xi|)\ln\ln(\sigma+|\xi|)\cdot\cdot\cdot
\underbrace{\ln\ln\cdot\cdot\cdot\ln}_{k\,\rm{times}}(\sigma+|\xi|)\right)^{\frac{1}{2}},
\end{equation}
where the constant $\widetilde{C}>0$ and $\sigma \geq \sigma(k)$.
If $\alpha>0$, then (\ref{GMHD}) admits a unique global solution such that for any given $T>0$,
$$u,\,\,b\in L^{\infty}\big([0, T]; H^{s}(\mathbb{R}^{2})\big),\qquad \Lambda^{\alpha}u,\,\, \mathcal{L}^{\frac{1}{2}}b\in L^{2}\big([0, T]; H^{s}(\mathbb{R}^{2})\big).$$
\end{thm}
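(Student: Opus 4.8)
The plan is to pair a standard local theory with global-in-time a priori estimates, the crux being a uniform control of the $H^1$ norm. First I would construct, by a routine mollification and energy argument, a unique local solution $(u,b)\in C([0,T_0];H^s)$ and record a Beale--Kato--Majda type continuation criterion: the solution extends in $H^s$ as long as $\int_0^T\big(\|\nabla u\|_{L^\infty}+\|\nabla b\|_{L^\infty}\big)\,dt<\infty$. The $L^2$ energy estimate is immediate: pairing $(\ref{GMHD})_1$ with $u$ and $(\ref{GMHD})_2$ with $b$, the coupling terms cancel because $\int (b\cdot\nabla b)\cdot u+\int(b\cdot\nabla u)\cdot b=0$, giving a bound on $\|u\|_{L^2}^2+\|b\|_{L^2}^2$ and the dissipation budget $\int_0^T\big(\|\Lambda^\alpha u\|_{L^2}^2+\|\mathcal{L}^{1/2}b\|_{L^2}^2\big)\,dt<\infty$, which will later supply the integrable-in-time coefficients.

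For the $H^1$ level I would work with the vorticity $\omega=\nabla\times u$ and the current $j=\nabla\times b$. Taking the curl and using $\nabla\cdot u=\nabla\cdot b=0$ yields $\partial_t\omega+u\cdot\nabla\omega+(-\Delta)^\alpha\omega=b\cdot\nabla j$ and $\partial_t j+u\cdot\nabla j+\mathcal{L}j=b\cdot\nabla\omega+N$, where $N$ is a quadratic form in $\nabla u$ and $\nabla b$. Pairing with $\omega$ and $j$, the stretching terms cancel since $\int(b\cdot\nabla j)\,\omega+\int(b\cdot\nabla\omega)\,j=\int b\cdot\nabla(\omega j)=0$, leaving
\begin{equation*}
\frac{1}{2}\frac{d}{dt}\big(\|\omega\|_{L^2}^2+\|j\|_{L^2}^2\big)+\|\Lambda^\alpha\omega\|_{L^2}^2+\|\mathcal{L}^{1/2}j\|_{L^2}^2=\int N\,j\,dx .
\end{equation*}
Everything then hinges on $\int N\,j\,dx\lesssim\int|\nabla u|\,|\nabla b|\,|j|\,dx$. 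This is the main obstacle. A direct H\"older--Gagliardo--Nirenberg bound gives $\|\omega\|_{L^2}\|j\|_{L^2}\|\nabla j\|_{L^2}$, but $\|\nabla j\|_{L^2}$ is \emph{not} dominated by the available dissipation $\|\mathcal{L}^{1/2}j\|_{L^2}$: the symbol $|\xi|^2/g(\xi)$ leaves a frequency-dependent logarithmic gap $g(\xi)$, and the velocity dissipation is too weak ($\alpha$ may be arbitrarily small) to make up for it.

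To bridge this gap I would decompose in Littlewood--Paley blocks, using condition (b) to treat $g(\Lambda)$ as a well-behaved multiplier. Writing $\|\mathcal{L}^{1/2}j\|_{L^2}^2\approx\sum_q 2^{2q}g(2^q)^{-1}\|\Delta_q j\|_{L^2}^2$ and $\|\nabla j\|_{L^2}^2\approx\sum_q g(2^q)\big[2^{2q}g(2^q)^{-1}\|\Delta_q j\|_{L^2}^2\big]$, a split at a cutoff frequency $2^Q$ bounds $\|\nabla j\|_{L^2}^2$ by $g(2^Q)\,\|\mathcal{L}^{1/2}j\|_{L^2}^2$ plus a high-frequency tail controlled by a higher Sobolev norm of $(u,b)$. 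Choosing $2^Q$ a power of $\|(u,b)\|_{H^s}$ converts the cutoff loss into an iterated logarithm, and the hypothesis $g(\xi)\le\widetilde C(\ln\cdots\ln)^{1/2}$ feeds in exactly a $\tfrac12$-power. After absorbing a copy of $\|\mathcal{L}^{1/2}j\|_{L^2}^2$ and inserting the dissipation budget as an $L^1(0,T)$ coefficient $\Phi$, the scheme is designed to produce an Osgood-type inequality for $E(t)=\|(u,b)(t)\|_{H^s}^2$,
\begin{equation*}
\frac{d}{dt}E(t)\le C\,\Phi(t)\,E(t)\,\Big(\ln(e+E)\,\ln\ln(e+E)\cdots\underbrace{\ln\cdots\ln}_{k}(e+E)\Big)^{1/2},\qquad \Phi\in L^1(0,T).
\end{equation*}
The precise $\tfrac12$-power iterated-logarithm weight is what makes $\int^\infty dz/\big[z(\ln z\cdots)^{1/2}\big]=+\infty$, so Osgood's lemma forbids finite-time blow-up; the positivity of $\alpha$ is used only to guarantee that $\Phi$ (built from the velocity dissipation) is genuinely integrable, with no lower threshold on $\alpha$.

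The top-order estimate itself follows the usual route: apply $\Lambda^s$ to the system, control the transport terms by Kato--Ponce commutator estimates, use the same coupling cancellation, and close with the logarithmic Sobolev inequality relating $\|\nabla u\|_{L^\infty}+\|\nabla b\|_{L^\infty}$ to $\ln(e+E)$; this is what supplies the right-hand side above and, once $E$ is shown to be globally bounded, simultaneously verifies the continuation criterion and upgrades the local solution to a global one. Uniqueness is obtained by a standard $L^2$ estimate on the difference of two solutions, whose regularity is more than sufficient to absorb the resulting commutators. The single most delicate point is the borderline matching in the previous paragraph: extracting exactly the factor $g(2^Q)$ from $\int N\,j\,dx$ and aligning it with the iterated-logarithm growth of $g$ so that the Osgood criterion is met precisely at the threshold.
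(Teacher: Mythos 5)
Your $L^2$ estimate, the vorticity--current cancellation, and the diagnosis of the main obstacle (the logarithmic gap between $\|\nabla j\|_{L^2}$ and $\|\mathcal{L}^{1/2}j\|_{L^2}$) all match the paper, but the proposed closure has a genuine gap at its central step. The Osgood inequality you aim for, $E'\le C\Phi(t)\,E\,(\ln(e+E)\cdots)^{1/2}$ with $\Phi\in L^1$, is not what your frequency splitting produces. If you cut at $2^Q\sim(e+E)^{\lambda}$, the low-frequency absorption by Young's inequality leaves a coefficient of the form $\|\omega\|_{L^2}^{2}\|j\|_{L^2}^{2}$ (quadratic in the unknown $H^1$ quantity, not an integrable function of time), and any attempt to instead bound the low frequencies by $2^{Q}g(2^{Q})\|\mathcal{L}^{1/2}b\|_{L^2}^{2}$ turns the cutoff loss into a \emph{polynomial} $(e+E)^{\lambda}$, not an iterated logarithm. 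The structurally correct choice — the paper's, in Lemma \ref{tL32} — is to balance the two dissipation pieces, which forces $2^{M}\approx\|\mathcal{L}^{1/2}j\|_{L^2}/(1+\|\mathcal{L}^{1/2}b\|_{L^2})$; the iterated logarithm then acts on the \emph{dissipation} $B(t)$ sitting on the left-hand side (see \eqref{vbklhgw10}), a structure that classical Osgood/Gronwall cannot handle and which is exactly why the paper invokes the refined logarithmic Gronwall inequality of Lemma \ref{Lem01khj} (tolerating $\ln(A+B)$ and a coefficient $n(t)\le CA(t)$). Relatedly, your reading of the exponent $\tfrac12$ in \eqref{logcd1} is off: even the full product $\ln z\,\ln\ln z\cdots$ satisfies $\int^{\infty}dz/(z\ln z\cdots)=+\infty$, so if your scheme really closed it would prove the theorem with exponent $1$ — a red flag. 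The $\tfrac12$ is consumed by the squaring of $g$ in the Young step, not by the Osgood integral.

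The second gap is the passage from $H^1$ to $H^s$. Your plan closes with a log-Sobolev bound $\|\nabla u\|_{L^\infty}+\|\nabla b\|_{L^\infty}\lesssim\ln^{1/2}(e+E)$, but Brezis--Gallouet-type inequalities carry an intermediate-norm prefactor: for $b$ it is $\|b\|_{H^2}\sim\|\nabla j\|_{L^2}$, precisely the gapped quantity, and re-splitting it inside the $H^s$ estimate produces, after absorbing $\|\mathcal{L}^{1/2}j\|$ by Young, a term of size $E^{2}$ times logarithms, destroying the Osgood structure; for $u$, the available dissipation $\Lambda^{\alpha}\omega\in L^2_tL^2$ with $\alpha$ arbitrarily small gives no route to $\nabla u\in L^1_tL^\infty$ at all. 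This is exactly the role of the paper's middle layer, which your proposal has no substitute for: the kernel estimates for $K(t)=\mathcal{F}^{-1}\big(e^{-t|\xi|^{2}/g(\xi)}\big)$ (Lemmas \ref{tL31}--\ref{khrdlyvhbg}), the Duhamel representation of $\Lambda j$, and the nonlinear Bernstein inequality \eqref{dfoklp88}, which together yield $\|\omega\|_{L^q}$ and $\int_0^t\|\nabla b\|_{L^\infty}\,d\tau\le C$ (Lemma \ref{L35}); the $H^s$ step then avoids $\|\nabla u\|_{L^\infty}$ entirely by using $\|\nabla u\|_{L^q}\lesssim\|\omega\|_{L^q}$ and interpolating against $\|\Lambda^{s+\alpha}u\|_{L^2}$ (which needs $q>1/\alpha$). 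Consequently your claim that $\alpha>0$ enters only through integrability of $\Phi$ is also incorrect: the paper's $H^1$ bound needs no positive $\alpha$ at all, while $\alpha>0$ is essential in Lemma \ref{L35} and in the final commutator estimates (the choices $\delta<\alpha$, $\delta<2\alpha/q$, and $\alpha q>1$).
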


\begin{rem}\rm
Compared with \cite{Zhao22}, we are not only relaxing the assumption of $g$ from double logarithm to logarithm, but also relaxing $\alpha>\frac{1}{4}$ to $\alpha>0$. Moreover, in the special case when $g$ is a positive constant, Theorem \ref{Th1} re-establishes the global
regularity for \eqref{2dGMHD} with $\alpha>0,\beta=1$.
\end{rem}
\begin{rem}\rm
On the one hand, it is interesting to consider the problem whether Theorem \ref{Th1} is still valid when $(-\Delta)^{\alpha}$ is replaced by some logarithmic operators, such as $[\ln (e+\Lambda)]^{\rho}$ for some $\rho>0$.
On the other hand, at the moment it is also not clear whether Theorem \ref{Th1} holds true when the power one half of \eqref{logcd1} is replaced by $\tau$ with $\tau>\frac{1}{2}$. The only place where we require $\tau\leq\frac{1}{2}$ lies in proving Lemma \ref{tL32} below. We will investigate these problems further in our future work.
\end{rem}

\begin{rem}\rm
It follows from the proof of Lemma \ref{tL32} below that the unique global existence classical solution in Sobolev spaces has at least a $(k+1)$-multiple exponential upper bound uniformly in times, which particularly implies that it is very difficult to obtain the global regular solution even for reducing logarithm-type diffusion for magnetic field. Furthermore, this also indicates that the well-known global regularity problem on the 2D resistive MHD equations is a critical open problem.
\end{rem}

\vskip .1in
Finally, we do not have a complete solution for the case $\alpha=0$, but we are able to establish several global \emph{a priori} bounds for this case, which may be helpful in the eventual resolution of the global regularity problem for the 2D resistive MHD equations.

\begin{thm}\label{Th2} Consider (\ref{GMHD}) with $\alpha=0$, namely,
\begin{equation}\label{ZEGMHD}
\left\{\aligned
&\partial_{t}u+(u\cdot\nabla)u+\nabla p=(b\cdot\nabla) b,\qquad x \in \mathbb{R}^{2},\, t>0,\\
&\partial_{t}b+(u\cdot\nabla)b+\mathcal{L}b=(b\cdot\nabla)u,
\\
&\nabla\cdot u=0,\ \ \ \nabla\cdot b=0,\\
&u(x,0)=u_{0}(x),\,\,b(x,0)=b_{0}(x).
\endaligned \right.
\end{equation}
Let $(u_{0}, b_{0})
\in H^{s}(\mathbb{R}^{2})\times H^{s}(\mathbb{R}^{2})$ with $s\geq2$ and satisfy $\nabla\cdot u_{0}=\nabla\cdot b_{0}=0$. If $g$ satisfies the assumptions stated in Theorem \ref{Th1}, then the solution $(u,b)$ of \eqref{ZEGMHD} admits the following global bounds for any $r\in[0,1)$
$$\|\omega(t)\|_{L^{2}}+\|\Lambda^{r}j(t)\|_{L^{2}}+\|\mathcal{L}b(t)\|_{L^{2}} \leq C(t,u_{0},b_{0}).$$
In addition, if $\Lambda^{r}\{\mathcal{L}b_{0}-
(b_{0}\cdot\nabla)u_{0}\}\in L^{2}(\mathbb{R}^{2})$, then it also holds
$$\|\Lambda^{r}\{\mathcal{L}b-
(b\cdot\nabla)u\}(t)\|_{L^{2}}\leq C(t,u_{0},b_{0}).$$
\end{thm}

\begin{rem}\rm
It follows from the global \emph{a priori} bound of Lemma \ref{tL32} that \eqref{ZEGMHD} indeed admits a global $H^1$-weak solution. Although we have derived several global \emph{a priori} bounds in Theorem \ref{Th2}, it is not clear if such weak solutions are unique or can be improved to global classical solutions.
\end{rem}

\vskip .2in
The rest of this paper is divided into two sections and an appendix. In Section \ref{sectt2}, we collect some properties of the operator $\mathcal{L}$ and the definition of Besov spaces as well as several facts, which will be used in this paper. Section \ref{sectt3} is devoted to the proof of Theorem \ref{Th1}. Finally, the proof of Theorem \ref{Th2} is presented in Section \ref{addedxq}.

\vskip .1in
\textbf{Notation}: Throughout this paper, the letter $C$ denotes various positive and finite constants whose exact values are unimportant and may vary from line to line.
To emphasize the dependence of a constant on some certain quantities $\rho_{1},\rho_{2},\cdot\cdot\cdot$, we write $C(\rho_{1},\rho_{2},\cdot\cdot\cdot)$. Moreover, throughout this paper, $C(t,\cdot\cdot\cdot)$ is nondecreasing in terms of the time variable, and is bounded as $t$ approaches to zero.
Let $\mathbb{X}$ be a Banach space. For $p\in[1,\infty]$, the notation $L^p(0,T;\mathbb{X})$ or $L_{T}^p(\mathbb{X})$ stands for the set of measurable functions on $(0,T)$ with values in $\mathbb{X}$, such that $t\mapsto \|h(t)\|_{\mathbb{X}}$ belongs to $L^p(0,T)$.

\vskip .3in
\section{Preliminaries}\setcounter{equation}{0}\label{sectt2}
\subsection{Several properties of the operator $\mathcal{L}$} This subsection is mainly devoted to establishing some properties for the operator $\mathcal{L}$, which play some important roles in proving our main theorem. We mention that the estimates of this subsection is partially inspired by our previous paper \cite{Yejee18}.
To begin with, we consider the linear inhomogeneous equation
\begin{equation}\label{linear}
\left\{\aligned
&\partial_{t}W+\mathcal{L}W=f,
\\
&W(x,0)=W_{0}(x).
\endaligned \right.
\end{equation}
Thanks to the Fourier transform method, the solution of the linear inhomogeneous equation (\ref{linear}) can be explicitly given by
\begin{equation}
W(t)=K(t)\ast W_{0}+\int_{0}^{t}{K(t-\tau)\ast f(\tau)\,d\tau},\nonumber
\end{equation}
where the kernel $K$ satisfies
\begin{eqnarray}
K(x,t)=\mathcal{F}^{-1}
\Big(e^{\frac{-t|\xi|^{2}}{g(\xi)}}\Big)(x).\nonumber
\end{eqnarray}
Here and in what follows, $\mathcal{F}^{-1}{g}$ denotes the inverse Fourier transform of $g$. We also denote $\mathcal{F}{g}$ as the Fourier transform of $g$.

\vskip .1in
Now we will establish the following crucial estimate.
\begin{lemma}\label{tL31}
For any $k\geq0$ and $s>k-1$, there exists a constant $C$ depending only on $s$ and $k$ such that for any $t>0$
\begin{eqnarray}\label{tok202}
\int_{\mathbb{R}^{2}}{\frac{|\xi|^{2s}}{g^{k}(\xi)}e^{-\frac{2t|\xi|^{2}}{g(\xi)}}
\,d\xi}\leq Ct^{-(s+1)} g(A_{t})^{s-k+1},
\end{eqnarray}
where $A_{t}>0$ is the unique solution of the following equation
\begin{eqnarray}
\label{fgetfwf}
\frac{g(x)}{x^{2}}=t.
\end{eqnarray}
\end{lemma}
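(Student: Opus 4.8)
The plan is to pass to polar coordinates and split the resulting radial integral at the critical frequency $A_t$ defined by (\ref{fgetfwf}). Since $g$ is radial, writing $g(\xi)=g(|\xi|)=g(r)$ and integrating out the angle reduces (\ref{tok202}) to the one-dimensional estimate
$$\int_0^\infty \frac{r^{2s+1}}{g(r)^k}\,e^{-\frac{2tr^2}{g(r)}}\,dr \le C\,t^{-(s+1)}\,g(A_t)^{s-k+1}.$$
First I would record the elementary but decisive monotonicity fact behind (\ref{fgetfwf}): because $g\ge C_0>0$ by condition (a) and $g(r)=o(r^2)$ (guaranteed by the growth restriction, and a fortiori by (\ref{logcd1})), the ratio $g(r)/r^2$ decreases strictly from $+\infty$ to $0$, so $A_t$ exists, is unique, and satisfies $A_t^2=g(A_t)/t$. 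Moreover $g(r)/r^2\ge t$ for $r\le A_t$ and $g(r)/r^2\le t$ for $r\ge A_t$; these two inequalities are exactly what drive the two regimes.

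For the low-frequency piece $r\in(0,A_t)$ I would bound the exponential by $1$ and convert powers of $r$ into powers of $g$: from $r^2\le g(r)/t$ one gets $r^{2k}\le t^{-k}g(r)^k$, hence $r^{2s}/g(r)^k\le t^{-k}r^{2(s-k)}$, and
$$\int_0^{A_t}\frac{r^{2s+1}}{g(r)^k}\,dr\le t^{-k}\int_0^{A_t} r^{2(s-k)+1}\,dr =\frac{t^{-k}}{2(s-k+1)}\,A_t^{2(s-k+1)}.$$
Here the hypothesis $s>k-1$ is precisely the condition guaranteeing integrability at the origin, and inserting $A_t^{2}=g(A_t)/t$ turns the right-hand side into $C\,t^{-(s+1)}g(A_t)^{s-k+1}$, the desired form.

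For the high-frequency piece $r\in(A_t,\infty)$ I would first use the monotonicity of $g$ to extract the factor $g(r)^{-k}\le g(A_t)^{-k}$, and then rescale $r=A_t\rho$. Using $A_t^{2s+2}=t^{-(s+1)}g(A_t)^{s+1}$ and $tA_t^2=g(A_t)$, the contribution becomes $C\,t^{-(s+1)}g(A_t)^{s-k+1}$ times the residual integral $\int_1^\infty \rho^{2s+1}e^{-2\rho^2/h(\rho)}\,d\rho$, where $h(\rho):=g(A_t\rho)/g(A_t)$. The crux, and the step I expect to be the main obstacle, is to bound this residual integral by a constant \emph{uniformly in $t$} (equivalently, in $A_t$), since $h$ itself depends on $t$ through $A_t$. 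This is where condition (b) enters: the bound $|\partial_\xi g|\le \widetilde C\,|\xi|^{-1}g$ integrates to $g(A_t\rho)\le g(A_t)\rho^{\widetilde C}$ for $\rho\ge1$, i.e. $h(\rho)\le\rho^{\widetilde C}$, so that the exponent obeys $2\rho^2/h(\rho)\ge 2\rho^{2-\widetilde C}$. Because the magnetic symbol is genuinely sub-quadratic (under (\ref{logcd1}) the effective Mikhlin exponent is arbitrarily small at large frequencies, so one may take $\widetilde C<2$ in the tail), the exponent tends to $+\infty$ and $\int_1^\infty\rho^{2s+1}e^{-2\rho^{2-\widetilde C}}\,d\rho$ is a finite constant depending only on $s$ and $\widetilde C$, independent of $t$. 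Adding the low- and high-frequency bounds then yields (\ref{tok202}); the delicate point to treat carefully in the write-up is precisely this uniform-in-$t$ decay of the high-frequency tail, for which the sub-quadratic growth of $g$ is essential.
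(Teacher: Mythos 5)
Your proposal is correct and follows essentially the same route as the paper's proof: the paper likewise splits the integral at a frequency $R$ (left general and then optimized, which forces $g(R)/R^{2}=t$, i.e.\ $R=A_{t}$), uses $s>k-1$ for integrability of the low-frequency piece near the origin, and controls the high-frequency tail through the sub-polynomial relative growth of $g$. The one step you single out as delicate --- the uniform bound $g(A_{t}\rho)\le C\,g(A_{t})\rho^{\beta}$ with some $\beta<2$ --- is exactly the paper's inequality \eqref{chkhgtt1} (asserted there from the ``logarithmic'' form of $g$ in \eqref{logcd1}), so your tail estimate rests on precisely the same property of $g$ that the paper invokes, and indeed you need it only for one fixed $\beta<2$ rather than for every $\epsilon>0$.
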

\begin{proof}
To begin with, we split the integral into two parts
\begin{eqnarray}
\int_{\mathbb{R}^{2}}{\frac{|\xi|^{2s}}{g^{k}(\xi)}e^{-\frac{2t|\xi|^{2}}{g(\xi)}}
\,d\xi}
=\int_{|\xi|\leq R}{\frac{|\xi|^{2s}}{g^{k}(\xi)}e^{-\frac{2t|\xi|^{2}}{g(\xi)}}\,d\xi}
+\int_{|\xi|\geq R}{\frac{|\xi|^{2s}}{g^{k}(\xi)}e^{-\frac{2t|\xi|^{2}}{g(\xi)}}\,d\xi},\nonumber
\end{eqnarray}
where $R>0$ will be fixed later. Thanks to $s>k-1$, we have
\begin{align}\label{tok203}
\int_{|\xi|\leq R}{\frac{|\xi|^{2s}}{g^{k}(\xi)}e^{-\frac{2t|\xi|^{2}}{g(\xi)}}\,d\xi} =&(2t)^{-k}\int_{|\xi|\leq R}{|\xi|^{2(s-k)} \left(\frac{2t|\xi|^{2}}{g(\xi)}\right)^{k}
e^{-\frac{2t|\xi|^{2}}{g(\xi)}}\,d\xi}\nonumber\\ \leq&Ct^{-k}\int_{|\xi|\leq R}{|\xi|^{2s-2k}  \,d\xi}\nonumber\\ \leq&Ct^{-k}\int_{0}^{R}{r^{2s-2k+1}  \,dr}\nonumber\\ \leq&Ct^{-k}R^{2(s-k+1)},
\end{align}
where we have used the simple fact
\begin{eqnarray}\label{tok204}
\max_{\lambda\geq0}(\lambda^{k}e^{-\lambda})\leq C(k).
\end{eqnarray}
Due to the fact that $g$ is a logarithmic function like \eqref{logcd1}, it is not hard to check that for any $\epsilon>0$, there exists $C(\epsilon)$ such that (see the end for its proof)
\begin{eqnarray}\label{chkhgtt1}
\frac{r^{\epsilon}}{g(r)}\geq C(\epsilon)\frac{R^{\epsilon}}{g(R)},\ \ \forall\,r\geq R.
\end{eqnarray}
Thanks to (\ref{tok204}) and \eqref{chkhgtt1}, one obtains
\begin{align}\label{tok205}
\int_{|\xi|\geq R}{\frac{|\xi|^{2s}}{g^{k}(\xi)}e^{-\frac{2t|\xi|^{2}}{g(\xi)}}\,d\xi} =&(2t)^{-k}\int_{|\xi|\geq R}{|\xi|^{2(s-k)} \left(\frac{2t|\xi|^{2}}{g(\xi)}\right)^{k}
e^{-\frac{t|\xi|^{2}}{g(\xi)}}e^{-\frac{t|\xi|^{2}}{g(\xi)}}\,d\xi}\nonumber\\ \leq&
Ct^{-k}
\int_{|\xi|\geq R}{|\xi|^{2s-2k}e^{-\frac{t|\xi|^{2}}{g(\xi)}}  \,d\xi}
\nonumber\\ =&
Ct^{-k}
\int_{R}^{+\infty}{r^{2s-2k+1}e^{-\frac{tr^{2}}{g(r)}}  \,dr}
\nonumber\\ =&
Ct^{-k}
\int_{R}^{+\infty}{r^{2s-2k+1}e^{-tr^{2-\epsilon}\frac{r^{\epsilon}}{g(r)}} \,dr}
\nonumber\\ \leq&
Ct^{-k}
\int_{R}^{+\infty}{r^{2s-2k+1}e^{-C_{\epsilon}tr^{2-\epsilon}\frac{R^{\epsilon}}{g(R)}} \,dr}\nonumber\\ \leq&Ct^{-\frac{2(s+1)-k\epsilon}{2-\epsilon}}
R^{-\frac{2(s-k+1)\epsilon}{2-\epsilon}}
g(R)^{\frac{2(s-k+1)}{2-\epsilon}}.
\end{align}
Combining (\ref{tok203}) and (\ref{tok205}) yields
\begin{align}
\int_{\mathbb{R}^{2}}{\frac{|\xi|^{2s}}{g^{k}(\xi)}e^{-\frac{2t|\xi|^{2}}{g(\xi)}}\,d\xi}
 \leq& Ct^{-k}R^{2(s-k+1)}+Ct^{-\frac{2(s+1)-k\epsilon}{2-\epsilon}}
R^{-\frac{2(s-k+1)\epsilon}{2-\epsilon}}
g(R)^{\frac{2(s-k+1)}{2-\epsilon}}.\nonumber
\end{align}
Now we fix $R$ such that
$$t^{-k}R^{2(s-k+1)}=t^{-\frac{2(s+1)-k\epsilon}{2-\epsilon}}
R^{-\frac{2(s-k+1)\epsilon}{2-\epsilon}}
g(R)^{\frac{2(s-k+1)}{2-\epsilon}}$$
or equivalently
$$\frac{g(R)}{R^{2}}=t.$$
Then we deduce
\begin{eqnarray}
\int_{\mathbb{R}^{2}}{\frac{|\xi|^{2s}}{g^{k}(\xi)}e^{-\frac{2t|\xi|^{2}}{g(\xi)}}\,d\xi}
\leq 2Ct^{-(s+1)} g(R)^{s-k+1}=2Ct^{-(s+1)} g(A_{t})^{s-k+1}
,\nonumber
\end{eqnarray}
which is \eqref{tok202}.
Finally, we say some words about \eqref{chkhgtt1}.
Actually, we rewrite \eqref{chkhgtt1} as
\begin{eqnarray}
\left(\frac{r}{R}\right)^{\epsilon}\geq C(\epsilon)\frac{g(r)}{g(R)},\ \ \forall\,r\geq R.\nonumber
\end{eqnarray}
Recalling the condition (a) and the fact $g$ is a logarithmic function like \eqref{logcd1}, we may derive
\begin{equation}
 \left\{\aligned
&g(r)\leq g\left(R^{2}\right)\leq C g\left(R\right)\leq \frac{C}{C_{0}} g\left(\frac{r}{R} \right) g\left(R\right), \ \quad\qquad \mbox{if}\ \  r\leq R^{2}\\
&g(r)\leq g\left(\frac{r^{2}}{R^{2}}\right)\leq C g\left(\frac{r}{R}\right)\leq \frac{C}{C_{0}} g\left(\frac{r}{R} \right) g\left(R\right), \qquad \mbox{if}\ \ r\geq R^{2},\nonumber
\endaligned \right.
\end{equation}
which yields
$$\frac{g(r)}{g(R)}\leq \frac{C}{C_{0}} g\left(\frac{r}{R} \right).$$
Thanks to $\frac{r}{R}\geq1$ and a logarithmic type function $g$, we conclude
$$ g\left(\frac{r}{R} \right)\leq C_{1}(\epsilon)\left(\frac{r}{R}\right)^{\epsilon}.$$
One thus has
$$\frac{g(r)}{g(R)}\leq \frac{C}{C_{0}}C_{1}(\epsilon)\left(\frac{r}{R}\right)^{\epsilon},$$
which of course ensures \eqref{chkhgtt1} by taking $C(\epsilon)=\frac{C_{0}}{C C_{1}(\epsilon)}$.
As a result, this completes the proof of Lemma \ref{tL31}.
\end{proof}

\vskip .1in
The following lemma is a consequence of Lemma \ref{tL31}.
\begin{lemma}\label{L32}
Let $s>-1$, then there exists a constant $C$ depending only on $s$ such that for any $t>0$
\begin{eqnarray}\label{tok206}
\|K(t)\|_{\dot{H}^{s}}\leq Ct^{-\frac{s+1}{2}}g(A_{t})^{\frac{s+1}{2}},
\end{eqnarray}
where $A_{t}$ is given by (\ref{fgetfwf}).
\end{lemma}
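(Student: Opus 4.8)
The plan is to reduce the estimate for $\|K(t)\|_{\dot{H}^s}$ directly to the frequency-space bound already established in Lemma \ref{tL31}. The starting point is Plancherel's theorem together with the explicit formula for the kernel. Since $\widehat{K}(\xi,t) = e^{-t|\xi|^{2}/g(\xi)}$, the definition of the homogeneous Sobolev norm gives
\begin{eqnarray}
\|K(t)\|_{\dot{H}^{s}}^{2}=\int_{\mathbb{R}^{2}}{|\xi|^{2s}\,|\widehat{K}(\xi,t)|^{2}\,d\xi}
=\int_{\mathbb{R}^{2}}{|\xi|^{2s}\,e^{-\frac{2t|\xi|^{2}}{g(\xi)}}\,d\xi}.\nonumber
\end{eqnarray}
The key observation I would exploit is that this last integral is precisely the left-hand side of \eqref{tok202} in the special case $k=0$, since $g^{0}(\xi)=1$.

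Next I would verify that the hypotheses of Lemma \ref{tL31} are met. With $k=0$, the restriction $s>k-1$ required there becomes exactly $s>-1$, which is the standing assumption of the present lemma. Thus Lemma \ref{tL31} applies with no additional work, and substituting $k=0$ into the conclusion \eqref{tok202} yields
\begin{eqnarray}
\int_{\mathbb{R}^{2}}{|\xi|^{2s}\,e^{-\frac{2t|\xi|^{2}}{g(\xi)}}\,d\xi}
\leq Ct^{-(s+1)}\,g(A_{t})^{s+1},\nonumber
\end{eqnarray}
where $A_{t}$ is the unique solution of \eqref{fgetfwf}, as in the previous lemma.

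Finally I would combine the two displays and take square roots, obtaining
\begin{eqnarray}
\|K(t)\|_{\dot{H}^{s}}\leq Ct^{-\frac{s+1}{2}}\,g(A_{t})^{\frac{s+1}{2}},\nonumber
\end{eqnarray}
which is exactly \eqref{tok206}. In this sense there is no genuine obstacle: the entire content of Lemma \ref{L32} is that it is the $k=0$ instance of Lemma \ref{tL31}, repackaged through Plancherel's identity into a Sobolev-norm statement. The only points requiring the slightest care are matching the exponents correctly (the factor of $\tfrac12$ arising from the square root, and the identification $s-k+1=s+1$ when $k=0$) and confirming that the index condition transfers as $s>-1$. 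Everything else follows immediately from the preceding lemma.
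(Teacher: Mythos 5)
Your proposal is correct and is essentially identical to the paper's own proof: both compute $\|K(t)\|_{\dot{H}^{s}}^{2}$ via Plancherel as $\int_{\mathbb{R}^{2}}|\xi|^{2s}e^{-\frac{2t|\xi|^{2}}{g(\xi)}}\,d\xi$ and then invoke Lemma \ref{tL31} with $k=0$ (so that $s>k-1$ becomes $s>-1$ and $g(A_{t})^{s-k+1}=g(A_{t})^{s+1}$), finishing by taking square roots. No gaps; your bookkeeping of the exponents and the index condition matches the paper exactly.
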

\begin{proof}
According to (\ref{tok202}), one obtains
\begin{align}
\|K(t)\|_{\dot{H}^{s}}^{2} =&
\int_{\mathbb{R}^{2}}{|\xi|^{2s}|\widehat{K}(\xi,t)|^{2}} \,d\xi\nonumber\\
 =&
\int_{\mathbb{R}^{2}}{|\xi|^{2s}e^{-\frac{2t|\xi|^{2}}{g(\xi)}}} \,d\xi
\nonumber\\ \leq& Ct^{-(s+1)}g(A_{t})
^{s+1}.\nonumber
\end{align}
Therefore, we conclude the proof of Lemma \ref{L32}.
\end{proof}

\vskip .1in
Next we would like to show the following lemma.
\begin{lemma}\label{tL33}
 There exists a constant $C$ such that for any $t>0$
\begin{eqnarray}\label{tok208}
\|\Lambda^{2-\delta}K(t)\|_{L^{1}}\leq Ct^{-(1-\frac{\delta}{2})}g^{1-\frac{\delta}{2}}(A_{t}),
\end{eqnarray}
where $\delta\in[0,1)$ and $A_{t}$ is given by (\ref{fgetfwf}).
\end{lemma}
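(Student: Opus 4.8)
The plan is to bound the $L^{1}$ norm by two weighted $L^{2}$ norms on the Fourier side, both of which reduce to the master estimate of Lemma \ref{tL31}. Writing $f=\Lambda^{2-\delta}K(t)$, I would first record the elementary interpolation inequality in $\mathbb{R}^{2}$,
$$\|f\|_{L^{1}}\leq C\|f\|_{L^{2}}^{\frac12}\big\||x|^{2}f\big\|_{L^{2}}^{\frac12},$$
obtained by splitting $\int_{\mathbb{R}^{2}}|f|\,dx$ over $\{|x|\leq\rho\}$ and $\{|x|>\rho\}$, applying Cauchy--Schwarz on each piece (using on the outer region the weight $|x|^{-2}$, whose square $|x|^{-4}$ is integrable at infinity in dimension two, giving a factor $C\rho^{-1}$, and on the inner region the factor $C\rho$ from the area), and then optimizing in $\rho$. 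This reduces the lemma to estimating $\|f\|_{L^{2}}$ and $\||x|^{2}f\|_{L^{2}}$.

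The first factor is immediate from Plancherel: since $\widehat f(\xi)=|\xi|^{2-\delta}e^{-t|\xi|^{2}/g(\xi)}$, we have $\|f\|_{L^{2}}^{2}=\int_{\mathbb{R}^{2}}|\xi|^{2(2-\delta)}e^{-2t|\xi|^{2}/g(\xi)}\,d\xi$, which is precisely \eqref{tok202} with $s=2-\delta$ and $k=0$ (and $s>-1$ holds since $\delta<1$). Hence $\|f\|_{L^{2}}\leq Ct^{-\frac{3-\delta}{2}}g(A_{t})^{\frac{3-\delta}{2}}$.

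For the second factor I would use the identity $\widehat{|x|^{2}f}=-\Delta_{\xi}\widehat f$, so that by Plancherel $\||x|^{2}f\|_{L^{2}}^{2}=C\int_{\mathbb{R}^{2}}|\Delta_{\xi}\widehat f|^{2}\,d\xi$, and expand $\Delta_{\xi}\big(|\xi|^{2-\delta}e^{-t|\xi|^{2}/g(\xi)}\big)$ by the Leibniz and chain rules. Setting $h(\xi)=t|\xi|^{2}/g(\xi)$, condition (b) yields $|\nabla g|\lesssim g/|\xi|$ and $|\nabla^{2}g|\lesssim g/|\xi|^{2}$, whence $|\nabla h|\lesssim t|\xi|/g$ and $|\nabla^{2}h|\lesssim t/g$, and the $(\nabla h)^{2}$ contribution is controlled by $t^{2}|\xi|^{2}/g^{2}$. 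Writing $\phi=|\xi|^{2-\delta}$, $\psi=e^{-h}$ and collecting the terms of $\Delta_{\xi}(\phi\psi)=(\Delta\phi)\psi+2\nabla\phi\cdot\nabla\psi+\phi\,\Delta\psi$, the integrand $|\Delta_{\xi}\widehat f|^{2}$ becomes a finite sum of expressions of the form $t^{2k}|\xi|^{2s}g(\xi)^{-2k}e^{-2h}$, the dominant one being $t^{4}|\xi|^{2(4-\delta)}g^{-4}e^{-2h}$ arising from $\phi\,\Delta\psi$. Applying Lemma \ref{tL31} term by term, with $(s,k)$ equal to $(-\delta,0)$, $(2-\delta,2)$, and $(4-\delta,4)$ respectively—each of which satisfies $s>k-1$ exactly because $\delta<1$—I expect every term to contribute at the same order, giving $\||x|^{2}f\|_{L^{2}}\leq Ct^{-\frac{1-\delta}{2}}g(A_{t})^{\frac{1-\delta}{2}}$. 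Inserting the two bounds into the interpolation inequality then produces $t^{-\frac{3-\delta}{4}-\frac{1-\delta}{4}}g(A_{t})^{\frac{3-\delta}{4}+\frac{1-\delta}{4}}=t^{-(1-\frac{\delta}{2})}g(A_{t})^{1-\frac{\delta}{2}}$, which is exactly \eqref{tok208}.

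The main obstacle will be the bookkeeping in differentiating the symbol twice: one must check that every term produced by $\Delta_{\xi}(\phi\psi)$—including the cross term $\nabla\phi\cdot\nabla\psi$ and the $(\nabla h)^{2}$ piece of $\Delta\psi$—is, after squaring, dominated by an admissible integrand $t^{2k}|\xi|^{2s}g^{-2k}e^{-2h}$ with $s>k-1$, so that Lemma \ref{tL31} is applicable. This is precisely where condition (b) is indispensable, since it prevents the derivatives of $g$ from generating extra growth, and it is also where the restriction $\delta\in[0,1)$ enters, being exactly what keeps the most singular term $(s,k)=(4-\delta,4)$ inside the admissible range $s>k-1$.
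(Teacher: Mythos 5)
Your proof is correct and takes essentially the same approach as the paper: the same $L^{1}$-interpolation between $\|f\|_{L^{2}}$ and a second-derivative quantity (the paper cites this inequality from \cite{Yejee18} and applies it through the radiality of $K$), the same first-factor bound $Ct^{-\frac{3-\delta}{2}}g(A_{t})^{\frac{3-\delta}{2}}$, and the same term-by-term application of Lemma \ref{tL31} with $(s,k)=(-\delta,0),(2-\delta,2),(4-\delta,4)$ after differentiating the symbol twice using condition (b). The only cosmetic difference is that you work with the weight $|x|^{2}$ and the identity $\widehat{|x|^{2}f}=-\Delta_{\xi}\widehat{f}$, which sidesteps the paper's remark that $K(\cdot,t)$ is radial.
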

\begin{proof}
We first notice that for any radial function $g$ in $\mathbb{R}^{2}$, there holds
$$\widehat{g}(x)=(\mathcal{F}^{-1}g)(x).$$
Obviously, $\widehat{K}(\xi,t)$ is a radial function and so does $|\xi|^{2-\delta}\widehat{K}(\xi,t)$.
By means of this observation, it yields that
\begin{align}\label{tok209}
\|\Lambda^{2-\delta}K(t)\|_{L^{1}}&= \|\mathcal{F}^{-1}(|\xi|^{2-\delta}\widehat{K}(\xi,t))\|_{L^{1}}
\nonumber\\&= \|\mathcal{F}(|\xi|^{2-\delta}\widehat{K}(\xi,t))\|_{L^{1}}
\nonumber\\&\leq C \||\xi|^{2-\delta}\widehat{K}(\xi,t)\|_{L^{2}}^{\frac{1}{2}}
\|\nabla_{\xi}^{2}(|\xi|^{2-\delta}\widehat{K}(\xi,t))\|_{L^{2}}^{\frac{1}{2}},
\end{align}
where in the last line we have applied the interpolation inequality
\begin{eqnarray} \|\widehat{h}\|_{L^{1}}\leq C \|h\|_{L^{2}}^{\frac{1}{2}}\|\nabla^{2}h\|_{L^{2}}^{\frac{1}{2}}.\nonumber
\end{eqnarray}
The above interpolation inequality is an easy consequence of the high-low frequency decomposition. More precisely, we have for any $\rho>1$ that
\begin{align}
\|\widehat{f}\|_{L^{1}}&= \int_{\mathbb{R}^{2}} |\widehat{f}(\xi)|\,d\xi\nonumber\\
&= \int_{|\xi|\leq \aleph} |\widehat{h}(\xi)|\,d\xi+\int_{|\xi|\geq \aleph} |\widehat{f}(\xi)|\,d\xi
\nonumber\\
&= \int_{|\xi|\leq \aleph} |\widehat{f}(\xi)|\,d\xi+\int_{|\xi|\geq \aleph}|\xi|^{-\rho}|\xi|^{\rho} |\widehat{f}(\xi)|\,d\xi
\nonumber\\
&\leq C N\Big(\int_{|\xi|\leq \aleph} |\widehat{f}(\xi)|^{2}\,d\xi\Big)^{\frac{1}{2}}+C\aleph^{1-\rho}\Big(\int_{|\xi|\geq \aleph}|\xi|^{2\rho} |\widehat{f}(\xi)|^{2}\,d\xi\Big)^{\frac{1}{2}}
\nonumber\\
&\leq C \aleph\|f\|_{L^{2}}+C\aleph^{1-\rho}\|\Lambda^{\rho}f\|_{L^{2}}
\nonumber\\
&\leq C
\|f\|_{L^{2}}^{1-\frac{1}{\rho}}\|\Lambda^{\rho}f\|_{L^{2}}^{\frac{1}{\rho}},\nonumber
 \end{align}
where in the last line we have fixed $\aleph$ as
$$\aleph=\left(\frac{\|\Lambda^{\rho}f\|_{L^{2}}}{\|f\|_{L^{2}}}\right)^{\frac{1}{\rho}}.$$
Using (\ref{tok206}), we have
\begin{eqnarray}\label{tok210}
\||\xi|^{2-\delta}\widehat{K}(\xi,t)\|_{L^{2}}
\leq Ct^{-\frac{3-\delta}{2}}g^{\frac{3-\delta}{2}}(A_{t}).
\end{eqnarray}
It follows from some direct computations that
$$|\nabla_{\xi}^{2}(|\xi|^{2-\delta}\widehat{K}(\xi,t))|\leq C(|\xi|^{-\delta}|\widehat{K}(\xi,t)|+|\xi|^{1-\delta}\,|\nabla_{\xi}\widehat{K}(\xi,t)|
+|\xi|^{2-\delta}|\nabla_{\xi}^{2}\widehat{K}(\xi,t)|).$$
Recalling $\widehat{K}(\xi,t)=e^{-\frac{t|\xi|^{2}}{g(\xi)}}$ and the property $|\partial_{\xi}^{k}g(\xi)|\leq \widetilde{C}|\xi|^{-k}|g(\xi)|,\, k\in \{1,\,2\}$, straightforward computations lead to the following estimates
\begin{align}
|\nabla_{\xi}\widehat{K}(\xi,t)|
 &\leq  C\left(\frac{t|\xi|}{g(\xi)}+\frac{t|\xi|^{2}}{g^{2}(\xi)}|\partial_{\xi}g(\xi)|\right)|\widehat{K}(\xi,t)|
\nonumber\\&\leq   C\frac{t|\xi||\widehat{K}(\xi,t)|}{g(\xi)},\nonumber
\end{align}
\begin{align}
|\nabla_{\xi}^{2}\widehat{K}(\xi,t)|
\leq & C\Big(\frac{t^{2}|\xi|^{2}}{g^{2}(\xi)}+\frac{t^{2}|\xi|^{4}|\partial_{\xi}g(\xi)|^{2}}{
g^{4}(\xi)}+\frac{t}{g(\xi)} +\frac{t|\xi|^{2}|\partial_{\xi}g(\xi)|^{2}}{g^{3}(\xi)}\nonumber\\&+
\frac{t|\xi||\partial_{\xi}g(\xi)|}{g^{2}(\xi)}
+\frac{t|\xi|^{2}|\partial_{\xi}^{2}g(\xi)|}{g^{2}(\xi)}\Big)
 |\widehat{K}(\xi,t)|\nonumber\\
\leq &C\Big(\frac{t^{2}|\xi|^{2}}{g^{2}(\xi)}+\frac{t}{g(\xi)}\Big)|\widehat{K}(\xi,t)|.
\nonumber
\end{align}
Therefore, we obtain
$$|\nabla_{\xi}^{2}(|\xi|^{2-\delta}\widehat{K}(\xi,t))|\leq C\left(|\xi|^{-\delta}+\frac{t|\xi|^{2-\delta}}{g(\xi)}
+\frac{t^{2}|\xi|^{4-\delta}}{g^{2}(\xi)}\right)
|\widehat{K}(\xi,t)|.$$
In view of (\ref{tok202}) again, one concludes
\begin{align}\label{tok211}
\|\nabla_{\xi}^{2}(|\xi|^{2-\delta}\widehat{K}(\xi,t))\|_{L^{2}}^{2}&\leq
C\int_{\mathbb{R}^{2}}{|\xi|^{-2\delta}e^{-\frac{2t|\xi|^{2}}{g(\xi)}}\,d\xi}+C
t^{2}\int_{\mathbb{R}^{2}}{\frac{|\xi|^{4-2\delta}}{g^{2}(\xi)}e^{-\frac{2t|\xi|^{2}}
{g(\xi)}}\,d\xi}
\nonumber\\& \quad+C
t^{4}\int_{\mathbb{R}^{2}}{\frac{|\xi|^{8-2\delta}}{g^{4}(\xi)}
e^{-\frac{2t|\xi|^{2}}{g(\xi)}}\,d\xi}
\nonumber\\
&\leq
Ct^{-(1-\delta)}g^{1-\delta}(A_{t})+
Ct^{2}t^{-(3-\delta)}g^{1-\delta}(A_{t})
+C
t^{4}t^{-(5-\delta)}g^{1-\delta}(A_{t})\nonumber\\
&\leq
Ct^{-(1-\delta)}g^{1-\delta}(A_{t}),
\end{align}
where $\delta\in[0,1)$.
Inserting (\ref{tok210}) and (\ref{tok211}) into (\ref{tok209}), we finally gather
$$\|\Lambda^{2-\delta}K(t)\|_{L^{1}}\leq Ct^{-(1-\frac{\delta}{2})}g^{1-\frac{\delta}{2}}(A_{t})
.$$
We thus complete the proof of Lemma \ref{tL33}.
\end{proof}

\vskip .1in

Now we are ready to show the following key lemma.
\begin{lemma}\label{khrdlyvhbg}
Under the assumptions of Theorem \ref{Th1}, there holds for $\delta\in(0,1)$
\begin{align}\label{bnmper8}
\int_{0}^{T}{
\| K(t)\|_{L^{2}}^{2-\delta}\,dt}+\int_{0}^{T}{
\|\Lambda^{1-\delta} K(t)\|_{L^{2}}\,dt}+\int_{0}^{T}{
\|\Lambda^{2-\delta} K(t)\|_{L^{1}}\,dt}\leq C(T).
\end{align}
\end{lemma}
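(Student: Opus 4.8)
The plan is to reduce both integrands to a single scalar quantity and then integrate that quantity sharply. For the first term, since $\|\Lambda^{1-\delta}K(t)\|_{L^{2}}=\|K(t)\|_{\dot{H}^{1-\delta}}$ and $1-\delta>-1$, I would invoke Lemma \ref{L32} with $s=1-\delta$ to obtain $\|\Lambda^{1-\delta}K(t)\|_{L^{2}}\le Ct^{-(1-\frac{\delta}{2})}g(A_{t})^{1-\frac{\delta}{2}}$. For the second term, Lemma \ref{tL33} already delivers exactly the same bound $\|\Lambda^{2-\delta}K(t)\|_{L^{1}}\le Ct^{-(1-\frac{\delta}{2})}g^{1-\frac{\delta}{2}}(A_{t})$. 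Thus the whole estimate \eqref{bnmper8} reduces to proving the single bound $\int_{0}^{T}t^{-(1-\frac{\delta}{2})}g^{1-\frac{\delta}{2}}(A_{t})\,dt\le C(T)$.

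To handle this integral I would change variables through the defining relation \eqref{fgetfwf}, writing $t=g(x)/x^{2}$ with $x=A_{t}$. As $t$ runs over $(0,T]$, the quantity $x=A_{t}$ decreases from $+\infty$ (because $g$ grows subpolynomially, $g(x)/x^{2}\to0$ forces $x\to\infty$ as $t\to0$) down to the finite value $A_{T}$. The key observation is the cancellation
\[
t^{-(1-\frac{\delta}{2})}g^{1-\frac{\delta}{2}}(A_{t})=\Big(\tfrac{g(x)}{x^{2}}\Big)^{-(1-\frac{\delta}{2})}g(x)^{1-\frac{\delta}{2}}=x^{2-\delta},
\]
so the $g$-factors disappear entirely. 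Differentiating $t=g(x)/x^{2}$ and using the Mikhlin-type bound $|g'(x)|\le\widetilde{C}g(x)/x$ from hypothesis (b), I would control $|dt|=\big|\frac{g'(x)}{x^{2}}-\frac{2g(x)}{x^{3}}\big|\,dx\le\frac{2g(x)}{x^{3}}\,dx$ for all large $x$ (the derivative term carries the favorable sign and is lower order). Putting these together converts the integral into $\int_{0}^{T}t^{-(1-\frac{\delta}{2})}g^{1-\frac{\delta}{2}}(A_{t})\,dt\le 2\int_{A_{T}}^{\infty}x^{2-\delta}\frac{g(x)}{x^{3}}\,dx=2\int_{A_{T}}^{\infty}\frac{g(x)}{x^{1+\delta}}\,dx$. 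Since $\delta>0$ and, by \eqref{logcd1}, $g$ is dominated by a fixed power of an iterated logarithm and therefore grows slower than any positive power of $x$, this last integral is finite, which yields the claimed constant $C(T)$.

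The step I expect to be the main obstacle is the rigorous justification of the change of variables: one must confirm that $t\mapsto A_{t}$ is a genuine $C^{1}$ decreasing bijection of $(0,T]$ onto $[A_{T},\infty)$ and that $\frac{d}{dx}\big(g(x)/x^{2}\big)<0$ on the relevant range, both of which hinge on the smoothness of $g$ and on condition (b) keeping $xg'(x)$ lower order than $g(x)$. An alternative that sidesteps the substitution is to estimate $A_{t}$ asymptotically from \eqref{fgetfwf}: taking logarithms in $g(A_{t})=tA_{t}^{2}$ gives $\ln A_{t}\sim\frac{1}{2}\ln\frac{1}{t}$ as $t\to0$, so by \eqref{logcd1} the factor $g(A_{t})$ is bounded by an iterated logarithm of $1/t$. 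Since $1-\frac{\delta}{2}<1$ and iterated logarithms are subpolynomial, one then has $t^{-(1-\frac{\delta}{2})}g^{1-\frac{\delta}{2}}(A_{t})\le Ct^{-(1-\frac{\delta}{2})-\epsilon}$ with $1-\frac{\delta}{2}+\epsilon<1$, which is integrable near $t=0$, while the integrand is continuous and bounded near $t=T$. Either route establishes \eqref{bnmper8}.
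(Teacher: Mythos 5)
Your proposal is correct and follows essentially the same route as the paper: combine Lemma \ref{L32} (with $s=1-\delta$) and Lemma \ref{tL33} to reduce \eqref{bnmper8} to $\int_{0}^{T}t^{-(1-\frac{\delta}{2})}g^{1-\frac{\delta}{2}}(A_{t})\,dt$, then substitute $t=g(R)/R^{2}$ so that the integrand collapses to $R^{2-\delta}$ and the integral becomes (up to the $g'$ term, controlled by the sign of $g'$ and condition (b)) $\int_{A_{T}}^{\infty}g(R)R^{-1-\delta}\,dR<\infty$ since $g$ grows subpolynomially. Your extra care about the monotonicity of $t\mapsto A_{t}$, and the alternative asymptotic bound $g(A_{t})\lesssim$ iterated logarithm of $1/t$, are sound refinements the paper glosses over, but the core argument is the same.
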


\begin{proof}
By \eqref{tok206} and \eqref{tok208}, it follows that
$$
\| K(t)\|_{L^{2}}^{2-\delta}+\|\Lambda^{1-\delta}  K(t)\|_{L^{2}}+
\|\Lambda^{2-\delta}  K(t)\|_{L^{1}}\leq Ct^{-(1-\frac{\delta}{2})}g^{1-\frac{\delta}{2}}(R),$$
where $R$ satisfies
$$t=\frac{g(R)}{R^{2}}.$$
Obviously, one gets
$$dt=\left(-2R^{-3}g(R)+R^{-2}g'(R)\right)dR,$$
which yields
\begin{align}
\int_{0}^{T}{t^{-(1-\frac{\delta}{2})}g^{1-\frac{\delta}{2}}(R)\,dt}&= \int_{+\infty}^{A_{T}}{R^{2-\delta}\left(-2R^{-3}g(R)+R^{-2}g'(R)\right)\, dR}\nonumber\\&= \int_{A_{T}}^{+\infty}{R^{2-\delta}\left(2R^{-3}g(R)-R^{-2}g'(R)\right)\, dR}
\nonumber\\&= 2\int_{A_{T}}^{+\infty}{R^{-1-\delta}g(R) \, dR}- \int_{A_{T}}^{+\infty}{R^{-\delta}g'(R) \, dR} \nonumber\\&\leq  C(T),\nonumber
\end{align}
where in the last line we have used (b) and the fact that $g$ is a logarithmic function satisfying \eqref{logcd1}. Consequently, we achieve
$$
\int_{0}^{T}{
\| K(t)\|_{L^{2}}^{2-\delta}\,dt}+\int_{0}^{T}{
\|\Lambda^{1-\delta} K(t)\|_{L^{2}}\,dt}+\int_{0}^{T}{
\|\Lambda^{2-\delta} K(t)\|_{L^{1}}\,dt}\leq C(T).
$$
This concludes the proof of Lemma \ref{khrdlyvhbg}.
\end{proof}

\vskip .1in

\subsection{Besov spaces and some useful facts}
This subsection provides the definition of Besov spaces and
several useful facts. Let us recall briefly the definition of the
Littlewood-Paley decomposition (see \cite{BCD} for more details). More precisely, we take some smooth radial non increasing function $\chi$ with values in $[0, 1]$ such that $\chi\in C_{0}^{\infty}(\mathbb{R}^{2})$ supported in the ball $\mathcal{B}\triangleq\{\xi\in \mathbb{R}^{2}, |\xi|\leq \frac{4}{3}\}$
and with value $1$ on $\{\xi\in \mathbb{R}^{2}, |\xi|\leq \frac{3}{4}\}$, then we define
$\varphi(\xi)\triangleq\chi\big(\frac{\xi}{2}\big)-\chi(\xi)$. It is clear that
${\varphi\in C_{0}^{\infty}(\mathbb{R}^{2})}$ is supported in the annulus
$\mathcal{C}\triangleq\{\xi\in \mathbb{R}^{2}, \frac{3}{4}\leq |\xi|\leq
\frac{8}{3}\}$ and satisfies
$$\chi(\xi)+\sum_{j\geq0}\varphi(2^{-j}\xi)=1, \quad  \forall \xi\in \mathbb{R}^{2};\qquad \sum_{j\in \mathbb{Z}}\varphi(2^{-j}\xi)=1, \quad  \forall \xi\neq0.$$
Let $h=\mathcal{F}^{-1}(\varphi)$ and $\widetilde{h}=\mathcal{F}^{-1}(\chi)$, then
the inhomogeneous dyadic blocks $\Delta_{j}$ of our decomposition by setting
$$\Delta_{j}u=0,\ \ j\leq -2; \ \  \ \ \ \Delta_{-1}u=\chi(D)u=\int_{\mathbb{R}^{2}}{\widetilde{h}(y)u(x-y)\,dy};
$$
$$ \Delta_{j}u=\varphi(2^{-j}D)u=2^{jn}\int_{\mathbb{R}^{2}}{h(2^{j}y)u(x-y)\,dy},\ \ \forall j\geq0.
$$
The homogeneous dyadic blocks $\dot{\Delta}_{j}$ reads
$$\dot{\Delta}_{j}u=\varphi(2^{-j}D)u =2^{2j}\int_{\mathbb{R}^{2}}{h(2^{j}y)u(x-y)\,dy},\ \ \forall j\in \mathbb{Z}.$$

\vskip .1in
It is now time to introduce the inhomogeneous Besov spaces, which are defined by the Littlewood-Paley decomposition.
\begin{define}
Let $s\in \mathbb{R}, (p,r)\in[1,+\infty]^{2}$. The inhomogeneous
Besov space $B_{p,r}^{s}$ is defined as a space of $f\in
S'(\mathbb{R}^{2})$ such that
$$ B_{p,r}^{s}=\{f\in S'(\mathbb{R}^{2});  \|f\|_{B_{p,r}^{s}}<\infty\},$$
where
\begin{equation} \nonumber
 \|f\|_{B_{p,r}^{s}}\triangleq\left\{\aligned
&\Big(\sum_{j\geq-1}2^{jrs}\|\Delta_{j}f\|_{L^{p}}^{r}\Big)^{\frac{1}{r}}, \quad \ r<\infty,\\
&\sup_{j\geq-1}
2^{js}\|\Delta_{j}f\|_{L^{p}}, \quad \ r=\infty.\\
\endaligned\right.
\end{equation}
\end{define}

We denote the tempered distributions by $S'(\mathbb{R}^{2})$, and polynomials by $\mathcal{P}(\mathbb{R}^{2})$. Now the homogeneous Besov spaces are defined as follows.
\begin{define}
Let $s\in \mathbb{R}, (p,r)\in[1,+\infty]^{2}$. The homogeneous
Besov space $\dot{B}_{p,r}^{s}$ is defined as a space of $f\in
S'(\mathbb{R}^{2})/\mathcal{P}(\mathbb{R}^{2})$ such that
$$ \dot{B}_{p,r}^{s}=\{f\in S'(\mathbb{R}^{2})/\mathcal{P}(\mathbb{R}^{2});  \|f\|_{\dot{B}_{p,r}^{s}}<\infty\},$$
where
\begin{equation}\nonumber
 \|f\|_{\dot{B}_{p,r}^{s}}\triangleq\left\{\aligned
&\Big(\sum_{j\in \mathbb{Z}}2^{jrs}\|\dot{\Delta}_{j}f\|_{L^{p}}^{r}\Big)^{\frac{1}{r}}, \quad \ 1\leq r<\infty,\\
&\sup_{j\in \mathbb{Z}}
2^{js}\|\dot{\Delta}_{j}f\|_{L^{p}}, \quad \ r=\infty.\\
\endaligned\right.
\end{equation}
\end{define}

\vskip .1in
The following one is the classical Bernstein type inequality (see \cite[Lemma 2.1]{BCD}).
\begin{lemma} \label{vfgty8xc}
Let $1\leq a\leq b\leq\infty$ and $\mathcal{C}$ be an annulus and $\mathcal{B}$ a ball of $\mathbb{R}^{2}$, then
$${\rm Supp}\widehat{f}\subset \lambda \mathcal{B}\ \Rightarrow\
\|\nabla^{k}f\|_{L^b}\leq C_1\, \lambda^{k  +
2(\frac{1}{a}-\frac{1}{b})} \|f\|_{L^a},
$$
$$  {\rm Supp}\widehat{f}\subset \lambda \mathcal{C}\ \Rightarrow\
C_2\, \lambda^{k} \| f\|_{L^b}\leq \|\nabla^{k} f\|_{L^b}\leq
C_3\, \lambda^{k + 2(\frac{1}{a}-\frac{1}{b})} \|f\|_{L^a},
$$
where $C_1$, $C_2$ and $C_3$ are positive constants depending on $k,\,a$ and $b$
only.
\end{lemma}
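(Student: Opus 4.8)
The plan is to prove all three estimates by realizing $\nabla^k f$ (or $f$ itself) as a convolution of $f$, respectively its derivatives, against an explicit frequency-localized kernel, and then to invoke Young's convolution inequality. The only structural input is that $\widehat{f}$ is supported in a dilate of a fixed ball or annulus; after rescaling the frequency variable by $\lambda$, every kernel I construct will be a fixed smooth compactly supported (hence Schwartz) function dilated by $\lambda$, so all the $\lambda$-powers come out of a single change of variables in the integral defining the $L^c$ norm of the kernel. This reduces each bound to a routine scaling computation plus Young's inequality, and isolates the one genuinely non-trivial point.

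First I would handle the two upper bounds simultaneously. Choose $\phi\in C_0^\infty(\mathbb{R}^2)$ with $\phi\equiv 1$ on $\mathcal{B}$ (ball case), or $\psi\in C_0^\infty(\mathbb{R}^2\setminus\{0\})$ with $\psi\equiv1$ on $\mathcal{C}$ (annulus case). Since $\mathrm{Supp}\,\widehat{f}\subset\lambda\mathcal{B}$ gives $\phi(\cdot/\lambda)\widehat{f}=\widehat{f}$, for any multi-index $|\gamma|=k$ one has $\widehat{\partial^\gamma f}(\xi)=(i\xi)^\gamma\phi(\xi/\lambda)\widehat{f}(\xi)$, i.e. $\partial^\gamma f=g_\lambda*f$ with $\widehat{g_\lambda}(\xi)=(i\xi)^\gamma\phi(\xi/\lambda)=\lambda^{k}h(\xi/\lambda)$, where $h(\eta)=(i\eta)^\gamma\phi(\eta)$ is smooth with compact support. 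A single change of variables yields $g_\lambda(x)=\lambda^{k+2}\check{h}(\lambda x)$, hence $\|g_\lambda\|_{L^c}=\lambda^{k+2-2/c}\|\check{h}\|_{L^c}$, and $\check{h}$ lies in every $L^c$ since $h$ is smooth with compact support. Taking $c$ defined by $1+\tfrac1b=\tfrac1a+\tfrac1c$, Young's inequality gives $\|\partial^\gamma f\|_{L^b}\le\|g_\lambda\|_{L^c}\|f\|_{L^a}$, and the exponent simplifies via $2-2/c=2(\tfrac1a-\tfrac1b)$ to $k+2(\tfrac1a-\tfrac1b)$, exactly the claimed power. This covers the ball estimate and the right-hand (upper) inequality in the annulus case verbatim.

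The hard part, the reverse inequality $C_2\lambda^k\|f\|_{L^b}\le\|\nabla^k f\|_{L^b}$ in the annulus case, requires inverting differentiation on the frequency support, which is possible only because $\widehat{f}$ vanishes near the origin. I would exploit that on $\mathcal{C}$ one has $\sum_{|\gamma|=k}|\xi^\gamma|^2\ge c_0>0$, so the symbols $\Theta_\gamma(\eta)\triangleq \dfrac{\overline{(i\eta)^\gamma}}{\sum_{|\gamma'|=k}|\eta^{\gamma'}|^2}\,\psi(\eta)$ are smooth, compactly supported away from the origin, and satisfy $\sum_{|\gamma|=k}\Theta_\gamma(\eta)(i\eta)^\gamma=\psi(\eta)$. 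Since $\psi(\cdot/\lambda)\equiv1$ on $\mathrm{Supp}\,\widehat{f}$, this gives the reconstruction $f=\sum_{|\gamma|=k}G_{\gamma,\lambda}*\partial^\gamma f$ with $\widehat{G_{\gamma,\lambda}}(\xi)=\lambda^{-k}\Theta_\gamma(\xi/\lambda)$, whence $G_{\gamma,\lambda}(x)=\lambda^{-k+2}\check{\Theta}_\gamma(\lambda x)$ and $\|G_{\gamma,\lambda}\|_{L^1}=\lambda^{-k}\|\check{\Theta}_\gamma\|_{L^1}$. Applying Young with the $L^1$ kernel in each summand yields $\|f\|_{L^b}\le\sum_{|\gamma|=k}\|G_{\gamma,\lambda}\|_{L^1}\|\partial^\gamma f\|_{L^b}\le C\lambda^{-k}\|\nabla^k f\|_{L^b}$, which is the desired lower bound. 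The only point needing care is the nonvanishing denominator, i.e. the strict separation of $\mathcal{C}$ from the origin; this is precisely where the annulus hypothesis (rather than a ball) is indispensable, and it is the step I expect to be the genuine obstacle, the upper bounds being routine once the kernel scaling is set up.
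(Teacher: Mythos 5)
Your proof is correct, and it is essentially the canonical argument: the paper does not prove this lemma at all but quotes it from \cite{BCD} (Lemma 2.1 there), and the proof given in that reference is precisely your construction — derivatives realized as convolution against rescaled, compactly supported Fourier multipliers, Young's inequality with the exponent $1+\frac1b=\frac1a+\frac1c$ producing the factor $\lambda^{k+2(\frac1a-\frac1b)}$, and, for the reverse bound on the annulus, division by the symbol $\sum_{|\gamma|=k}|\xi^{\gamma}|^{2}$, which is bounded below away from the origin, to reconstruct $f$ from $\nabla^{k}f$ via an $L^{1}$ kernel of size $\lambda^{-k}$. One cosmetic point: choose $\phi\equiv1$ (resp.\ $\psi\equiv1$) on a \emph{neighborhood} of $\mathcal{B}$ (resp.\ of $\mathcal{C}$), so that the identity $\phi(\cdot/\lambda)\widehat{f}=\widehat{f}$ is valid for distributional $\widehat{f}$ and not only for functions.
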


\vskip .1in
We next recall the following lower bound associated with the fractional dissipation term, which is due to Chamorro and Lemari$\rm\acute{e}$-Rieusset (see \cite[Theorem 4.2]{Chamorrolr}).

 \begin{lemma}
Let $q\in[2,\infty)$ and $\alpha\in (0,1)$, then there is a positive
constant $C(\alpha,q)$ such that
\begin{align}\label{dfoklp88}
\int_{\mathbb{R}^{2}}\Lambda^{2\alpha}f(|f|^{q-2}f)\,dx\geq C(\alpha,q)\|f\|_{\dot{B}_{q,q}^{\frac{2\alpha}{q}}}^{q}.
\end{align}
\end{lemma}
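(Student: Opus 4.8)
The plan is to bypass Littlewood--Paley entirely and argue through the Gagliardo (difference-quotient) representation of both the dissipation and the target Besov norm, reducing everything to two elementary pointwise inequalities. Since $\alpha\in(0,1)$, the fractional Laplacian admits the singular-integral form, and by Plancherel the quantity to be estimated is the symmetric bilinear expression
$$\int_{\mathbb{R}^{2}}\Lambda^{2\alpha}f\,(|f|^{q-2}f)\,dx=\frac{c_{\alpha}}{2}\int_{\mathbb{R}^{2}}\int_{\mathbb{R}^{2}}\frac{\big(f(x)-f(y)\big)\big(|f(x)|^{q-2}f(x)-|f(y)|^{q-2}f(y)\big)}{|x-y|^{2+2\alpha}}\,dx\,dy,$$
with a constant $c_{\alpha}>0$. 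First I would introduce $F:=|f|^{\frac{q}{2}-1}f$, so that $|F|^{2}=|f|^{q}$ and $\|F\|_{L^{2}}^{2}=\|f\|_{L^{q}}^{q}$; the role of $F$ is to linearise the problem into an $L^{2}$-based quantity.

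The first pointwise input is the convexity inequality
$$(a-b)\big(|a|^{q-2}a-|b|^{q-2}b\big)\ge c_{q}\big(|a|^{\frac{q}{2}-1}a-|b|^{\frac{q}{2}-1}b\big)^{2},\qquad a,b\in\mathbb{R},\ q\ge2,$$
valid with some $c_{q}>0$; inserting it into the double integral immediately gives the lower bound $\frac{c_{\alpha}c_{q}}{2}\iint|x-y|^{-(2+2\alpha)}\big(F(x)-F(y)\big)^{2}\,dx\,dy$, which by Plancherel equals $c\,\|\Lambda^{\alpha}F\|_{L^{2}}^{2}=c\,\|F\|_{\dot{H}^{\alpha}}^{2}$. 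Thus the dissipation controls the homogeneous $\dot{H}^{\alpha}$ energy of $F$ from below.

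It then remains to pass from $\|F\|_{\dot{H}^{\alpha}}$ back to $\|f\|_{\dot{B}_{q,q}^{2\alpha/q}}$. Here I would use that $s:=\frac{2\alpha}{q}\in(0,1)$ (since $q\ge2$ and $\alpha<1$), so the homogeneous Besov norm admits the Gagliardo characterisation $\|f\|_{\dot{B}_{q,q}^{s}}^{q}\approx\iint|x-y|^{-(2+sq)}|f(x)-f(y)|^{q}\,dx\,dy$, and crucially $sq=2\alpha$, so the kernel matches the one above. The second pointwise input is the H\"older-type bound $|a-b|^{q/2}\le C_{q}\big||a|^{\frac{q}{2}-1}a-|b|^{\frac{q}{2}-1}b\big|$, which is just the statement that $s\mapsto|s|^{2/q}\mathrm{sgn}(s)$ is $\frac{2}{q}$-H\"older continuous; squaring and integrating yields $\|f\|_{\dot{B}_{q,q}^{2\alpha/q}}^{q}\le C\iint|x-y|^{-(2+2\alpha)}\big(F(x)-F(y)\big)^{2}\,dx\,dy\approx\|F\|_{\dot{H}^{\alpha}}^{2}$. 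Chaining the two estimates gives $\int\Lambda^{2\alpha}f\,(|f|^{q-2}f)\,dx\ge C(\alpha,q)\,\|f\|_{\dot{B}_{q,q}^{2\alpha/q}}^{q}$, as claimed, after a routine density reduction to Schwartz $f$.

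The main obstacle I anticipate is not any single step but the careful verification of the two algebraic inequalities uniformly in $a,b$ (including opposite-sign configurations, where the constants degrade but remain positive for $q\ge2$), together with confirming that the Gagliardo characterisation applies in the admissible range $s=2\alpha/q\in(0,1)$ with exactly the kernel exponent $2+2\alpha$ produced by the dissipation. It is precisely this matching of homogeneities that makes the two difference-quotient integrals comparable and forces the Besov smoothness index to be $\frac{2\alpha}{q}$ rather than anything else.
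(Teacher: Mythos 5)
Your proof is correct, but it cannot be compared line-by-line with the paper's own argument, because the paper gives none: this lemma is quoted directly from Chamorro and Lemari\'e-Rieusset \cite{Chamorrolr} (Theorem 4.2 there), whose derivation goes through quite different machinery (semigroup/Littlewood--Paley techniques tied to the $\alpha$-stable process). Your difference-quotient route is a valid, self-contained and more elementary substitute. The two pointwise inequalities you invoke are both true uniformly over all sign configurations for $q\ge 2$: the Stroock--Varopoulos convexity bound $(a-b)(|a|^{q-2}a-|b|^{q-2}b)\ge c_q\bigl(|a|^{q/2-1}a-|b|^{q/2-1}b\bigr)^{2}$, and the reverse bound $|a-b|^{q/2}\le C_q\bigl||a|^{q/2-1}a-|b|^{q/2-1}b\bigr|$, which follows as you say from the $2/q$-H\"older continuity of the inverse power map (one may take $C_q=2$, checking the opposite-sign case via $\max(|a|,|b|)^{2/q}\ge\frac12(|a|^{2/q}+|b|^{2/q})$). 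The homogeneity matching is also right: $s=2\alpha/q\in(0,1)$ precisely because $q\ge 2$ and $\alpha<1$, and $sq=2\alpha$ reproduces the kernel exponent $2+2\alpha$ of the dissipation. Two points should be made explicit if your proof were to replace the citation: (i) the Gagliardo--Slobodeckij double integral must be identified with the Littlewood--Paley definition of $\dot{B}_{q,q}^{2\alpha/q}$ given in Section \ref{sectt2}, since that is the form actually used in the interpolation step of Lemma \ref{L35}; this equivalence is classical for smoothness index in $(0,1)$ but is a citable theorem, not a definition; (ii) the symmetrization identity for $\Lambda^{2\alpha}$ with $\alpha\in(0,1)$ and the finiteness of the double integrals require the density/regularity reduction you mention at the end, which is harmless here because the lemma is only applied to smooth solutions. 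What your approach buys is transparency --- the entire content is reduced to two algebraic inequalities plus standard norm characterizations; what the cited approach buys is a framework (nonlinear Bernstein inequalities) that also yields the frequency-localized versions of such lower bounds.
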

\vskip .1in
The final statement is the refined logarithmic Gronwall inequality (see \cite[Lemma 2.8]{Yeaam18}), which will be used to show the global $H^1$-bound of the solution.
\begin{lemma}\label{Lem01khj}
Assume that $l(t),\,m(t),\,n(t)$ and $f(t)$ are all nonnegative and integrable functions on $(0, T)$.
Let $A(t)$ and $B(t)$ be two
absolutely continuous and nonnegative functions on $(0, T)$ for any given $T>0$, satisfying for any $t\in (0, T)$
\begin{equation}
A'(t)+B(t)\leq \Big[l(t)+m(t)\ln(A+\alpha_{0})+n(t) {G}\big(k,\,\ln(A+B+\alpha_{0})\big)\Big](A+\alpha_{0})+f(t),\nonumber
\end{equation}
 where $\alpha_{0}=\alpha_{0}(k)\geq2$ is suitably large such that for example $\underbrace{\ln\cdot\cdot\cdot\ln}_{k\,\rm{times}}\alpha_{0}\geq 1$, and
\begin{eqnarray}  {G} (k,\,r )= r \ln r  \times\cdot\cdot\cdot\times\underbrace{\ln\cdot\cdot\cdot\ln}_{k\,\rm{times}}r,\quad k\geq1,\ k\in \mathbb{N}_{+}.\nonumber
\end{eqnarray}
Assume further that there are three constants $K\in[0,\,\infty)$, $\alpha\in[0,\,\infty)$ and $\beta\in [0,\,1)$ such that for any $t\in (0, T)$
\begin{equation}
n(t)\leq K\big(1+A(t)\big)^{\alpha}\big(1+A(t)+B(t)\big)^{\beta}.\nonumber
\end{equation}
Then the following estimate holds true for any $t\in (0, T)$
\begin{equation}
A(t)+\int_{0}^{t}{B(s)\,ds}\leq C.\nonumber
\end{equation}
\end{lemma}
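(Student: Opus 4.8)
The plan is to normalise the unknown and then feed the resulting scalar differential inequality into an Osgood/Bihari-type comparison argument calibrated to the iterated logarithm $G$. First I would set $Y(t)=A(t)+\alpha_{0}$, so that $Y'=A'$ and the hypothesis becomes $Y'+B\le\big[l+m\ln Y+nG\big(k,\ln(Y+B)\big)\big]Y+f$, the goal being to bound $Y(t)+\int_{0}^{t}B(s)\,ds$. The three driving terms should be treated in increasing order of difficulty: the contribution $l(t)Y+f(t)$ is controlled by the ordinary Gronwall inequality since $l,f\in L^{1}(0,T)$; the contribution $m(t)(\ln Y)Y$ is controlled by the classical logarithmic Gronwall inequality, producing at worst double-exponential growth. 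All the genuine difficulty is concentrated in the supercritical term $n(t)G\big(k,\ln(Y+B)\big)Y$, in which the dissipation $B$ enters both through the bound on $n$ and inside the iterated logarithm.

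The heart of the argument is to strip the instantaneous $B$ out of this term at the cost of $\frac{1}{2}B$ on the left-hand side. Here I would invoke the structural hypothesis $n\le KY^{\alpha}(Y+B)^{\beta}$ with $\beta<1$, combined with the fact that $s\mapsto G(k,\ln s)$ grows more slowly than any positive power of $s$, so that $(Y+B)^{\beta}G\big(k,\ln(Y+B)\big)\le C_{\epsilon}(Y+B)^{\beta+\epsilon}$ with $\beta+\epsilon<1$. Young's inequality then permits splitting off a small multiple of the full quantity $(Y+B)$, whose $B$-part is absorbed into the dissipation on the left. \emph{The delicate point — and the step I expect to be the main obstacle — is to arrange this splitting so that what remains is dominated by $Y\,G(k,\ln Y)$, i.e. linear in $Y$ up to the slowly growing factor, rather than by a genuine power $Y^{(\alpha+1)/(1-\beta)}$, which would force finite-time blow-up.} This is precisely where $\beta<1$ and the sublogarithmic nature of $G$ are indispensable, and I anticipate needing a continuity (bootstrap) argument in $Y$ to keep the residual coefficients integrable on $[0,T]$ while the estimate closes.

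Once the absorption is carried out, I would arrive at $Y'\le\bar C(t)\,Y\,G(k,\ln Y)$ with $\bar C\in L^{1}(0,T)$, modulo the already-controlled lower order terms. Setting $\rho=\ln Y$ turns this into $\rho'\le\bar C(t)\,G(k,\rho)$, and since $\frac{d}{d\sigma}\underbrace{\ln\cdots\ln}_{k+1}\sigma=\frac{1}{\sigma\ln\sigma\cdots\underbrace{\ln\cdots\ln}_{k}\sigma}=\frac{1}{G(k,\sigma)}$, the primitive of $1/G(k,\cdot)$ is the $(k+1)$-fold iterated logarithm, whose divergence at infinity is exactly the Osgood condition that prevents blow-up. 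Integrating gives $\underbrace{\ln\cdots\ln}_{k+1}\big(\ln Y(t)\big)\le\underbrace{\ln\cdots\ln}_{k+1}\big(\ln Y(0)\big)+\int_{0}^{t}\bar C$, hence a finite iterated-exponential bound for $Y(t)$ on every $[0,T]$, in agreement with the multiple-exponential growth recorded in the Remark. Feeding this bound back into the absorbed inequality and integrating in time then yields the bound for $\int_{0}^{t}B$. The three regimes merge because $G(k,\ln Y)\ge\ln Y\ge1$ dominates, so the iterated-exponential estimate subsumes the Gronwall and logarithmic-Gronwall contributions, giving $A(t)+\int_{0}^{t}B(s)\,ds\le C$ as claimed.
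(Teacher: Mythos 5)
First, note that the paper does not actually prove this lemma: it is quoted verbatim from \cite[Lemma 2.8]{Yeaam18}, so your attempt can only be compared against that cited argument. Your overall architecture is the right one and matches the endgame of that proof: normalize to $Y=A+\alpha_{0}$, reduce to a scalar inequality $Y'\leq \bar{C}(t)\,Y\,G(k,\ln Y)$ with $\bar{C}\in L^{1}(0,T)$, and integrate using the fact that the antiderivative of $1/G(k,\cdot)$ is the $(k+1)$-fold iterated logarithm, whose divergence at infinity is exactly the Osgood condition. The gap is in the step you yourself single out as the main obstacle, and the repair you propose there does not work. Once you replace $n$ by its structural bound $KY^{\alpha}(Y+B)^{\beta}$ and estimate $G(k,\ln(Y+B))\leq C_{\epsilon}(Y+B)^{\epsilon}$ \emph{uniformly in time}, Young's inequality leaves --- besides the absorbable piece $\delta(Y+B)$ --- the term $C_{\delta}Y^{p}$ with $p=(\alpha+1)/(1-\beta-\epsilon)>1$ and a \emph{time-independent} coefficient: the integrability of $n$ has been discarded at that point. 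A differential inequality of the form $Y'\leq \delta Y+C_{\delta}Y^{p}+\bar{C}(t)YG(k,\ln Y)$ is consistent with blow-up of $Y$ at a finite time determined by $Y(0),\delta,C_{\delta}$ alone, so no bound on an arbitrary prescribed $[0,T]$ can be deduced from it by any argument; in particular no continuity/bootstrap scheme can close, since closing one would require a comparison function $\Phi$, finite on $[0,T]$, with $\Phi'\gtrsim\Phi^{p}$, which itself blows up in finite time.

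The missing idea is that the absorption must be performed through a pointwise-in-time dichotomy, not uniformly. Fix $M=2(\alpha+1)/(1-\beta)$ and $\epsilon=(1-\beta)/4$, and at each $t$ distinguish two cases. If $Y+B\leq Y^{M}$, then $\ln(Y+B)\leq M\ln Y$, and since $G(k,\cdot)$ is increasing and satisfies $G(k,Mr)\leq C_{M}G(k,r)$ for $r$ bounded suitably below (this is where the largeness of $\alpha_{0}=\alpha_{0}(k)$ enters), the dangerous term is at most $C_{M}\,n(t)\,Y\,G(k,\ln Y)$: here you keep $n$ together with its integrability and never invoke the structural bound at all. If instead $Y+B>Y^{M}$, then $Y\leq(Y+B)^{1/M}$, whence
\[
n\,G\big(k,\ln(Y+B)\big)\,Y\;\leq\;KC_{\epsilon}\,Y^{\alpha+1}(Y+B)^{\beta+\epsilon}
\;\leq\;KC_{\epsilon}\,(Y+B)^{\frac{\alpha+1}{M}+\beta+\epsilon}
\;\leq\;\tfrac{1}{2}(Y+B)+C,
\]
because $\frac{\alpha+1}{M}+\beta+\epsilon=\frac{3+\beta}{4}<1$; the $\tfrac{1}{2}B$ is absorbed on the left and the remainder $\tfrac{1}{2}Y+C$ is linear in $Y$ with constant (hence integrable) coefficient --- no superlinear power ever appears. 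Combining the two cases yields $Y'+\tfrac{1}{2}B\leq\big[l+\tfrac{1}{2}+m\ln Y+C_{M}\,n\,G(k,\ln Y)\big]Y+f+C$, and since $G(k,\ln Y)\geq\ln Y\geq1$ your Osgood step (which is correct as written) then gives the bound on $Y$, and reinserting it and integrating gives the bound on $\int_{0}^{t}B$.
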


\vskip .2in
\section{ The proof of Theorem \ref{Th1}}\setcounter{equation}{0}\label{sectt3}
In this section, we give the proof of Theorem \ref{Th1}.
By the classical hyperbolic method, there exists a finite time $T_{0}$ such that the system (\ref{GMHD}) is local well-posedness in the interval $[0,\,T_{0}]$ in $H^{s}$ with $s\geq2$.
Thus, it is sufficient to establish {\it a priori} estimates in the interval $[0,\,T]$ for the given $T>T_{0}$.

\vskip .1in

Firstly, we state the following basic $L^2$-energy estimate and the proof of which is straightforward.
\begin{lemma}
Let $(u_{0},b_{0})$ satisfy the conditions stated in Theorem \ref{Th1}, then it holds
\begin{eqnarray}\label{t301}
\|u(t)\|_{L^{2}}^{2}+\|b(t)\|_{L^{2}}^{2}+ \int_{0}^{t}{
(\|\Lambda^{\alpha}u(\tau)\|_{L^{2}}^{2}+\|\mathcal{L}^{\frac{1}{2}}b(\tau)\|_{L^{2}}^{2})
\,d\tau}\leq\|u_{0}\|_{L^{2}}^{2}
+\|b_{0}\|_{L^{2}}^{2},
\end{eqnarray}
where $\mathcal{L}^{\frac{1}{2}}$ is defined by
$$\widehat{\mathcal{L}^{\frac{1}{2}}b}(\xi)=\frac{|\xi|}{\sqrt{g(\xi)}} \,\widehat{b}(\xi).$$
\end{lemma}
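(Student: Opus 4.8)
The plan is to carry out the classical $L^{2}$ energy estimate for the coupled system by testing each equation against its own unknown and then adding the two identities, relying on the divergence-free constraints to annihilate the advection, pressure, and coupling contributions. Concretely, I would take the $L^{2}$ inner product of the velocity equation $(\ref{GMHD})_{1}$ with $u$ and of the magnetic equation $(\ref{GMHD})_{2}$ with $b$.

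First I would dispose of the self-interaction and pressure terms. Since $\nabla\cdot u=0$, integration by parts gives $\int_{\mathbb{R}^{2}}(u\cdot\nabla)u\cdot u\,dx=0$ and $\int_{\mathbb{R}^{2}}(u\cdot\nabla)b\cdot b\,dx=0$, while the pressure contributes $\int_{\mathbb{R}^{2}}\nabla p\cdot u\,dx=-\int_{\mathbb{R}^{2}}p\,\nabla\cdot u\,dx=0$. For the dissipative parts, the definition of the fractional Laplacian together with Plancherel's theorem yields $\int_{\mathbb{R}^{2}}(-\Delta)^{\alpha}u\cdot u\,dx=\|\Lambda^{\alpha}u\|_{L^{2}}^{2}$, while for the magnetic diffusion I would use that $\mathcal{L}$ is a Fourier multiplier with the nonnegative symbol $|\xi|^{2}/g(\xi)$ (recall $g\geq C_{0}>0$ by condition (a)) to obtain, again by Plancherel,
\[
\int_{\mathbb{R}^{2}}\mathcal{L}b\cdot b\,dx=\int_{\mathbb{R}^{2}}\frac{|\xi|^{2}}{g(\xi)}|\widehat{b}(\xi)|^{2}\,d\xi=\|\mathcal{L}^{\frac{1}{2}}b\|_{L^{2}}^{2},
\]
which is exactly the dissipation appearing in $(\ref{t301})$.

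The only step that uses the coupling structure of the MHD system is the cancellation of the two magnetic terms. Writing them in index notation and summing, I would compute
\[
\int_{\mathbb{R}^{2}}(b\cdot\nabla)b\cdot u\,dx+\int_{\mathbb{R}^{2}}(b\cdot\nabla)u\cdot b\,dx=\int_{\mathbb{R}^{2}}b_{j}\partial_{j}(b_{i}u_{i})\,dx=-\int_{\mathbb{R}^{2}}(\nabla\cdot b)(b\cdot u)\,dx=0,
\]
where the final equality invokes $\nabla\cdot b=0$. This is the familiar fact that the work of the Lorentz force exactly balances the magnetic stretching term, so that kinetic and magnetic energy are merely exchanged without net production.

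Collecting these identities leaves
\[
\frac{1}{2}\frac{d}{dt}\big(\|u\|_{L^{2}}^{2}+\|b\|_{L^{2}}^{2}\big)+\|\Lambda^{\alpha}u\|_{L^{2}}^{2}+\|\mathcal{L}^{\frac{1}{2}}b\|_{L^{2}}^{2}=0,
\]
and integrating in time from $0$ to $t$ produces $(\ref{t301})$ with equality, hence the stated inequality. I expect no genuine obstacle: every manipulation is classical, and the only point deserving a word of care is verifying that $\mathcal{L}$ generates precisely the dissipation $\|\mathcal{L}^{\frac{1}{2}}b\|_{L^{2}}^{2}$, which is immediate from Plancherel and the positivity of $g$. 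For full rigour one performs these computations on the smooth local solution supplied by the local well-posedness theory and then passes to the limit, but at the level of \emph{a priori} estimates the argument above is complete.
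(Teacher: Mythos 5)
Your proof is correct and is precisely the standard argument intended here: the paper states this basic energy estimate without proof, and your computation (testing with $u$ and $b$, killing the transport and pressure terms via $\nabla\cdot u=\nabla\cdot b=0$, cancelling the two coupling terms, and identifying the dissipation via Plancherel) is the canonical way to obtain it. The only cosmetic point is that integrating the identity and multiplying by $2$ gives the dissipation integral with coefficient $2$, so \eqref{t301} follows as an inequality rather than an equality, exactly as stated.
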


\vskip .1in
In order to get the $H^{1}$-bound of $(u, b)$, we apply
$\nabla^{\bot}\cdot$ to the MHD equations (\ref{GMHD}) to obtain the
governing equations for the vorticity $\omega\triangleq\nabla^{\bot}\cdot u=\partial_{x_{1}}u_{2}-\partial_{x_{2}}u_{1}$ and the current $j\triangleq\nabla^{\bot}\cdot b=\partial_{x_{1}}b_{2}-\partial_{x_{2}}b_{1}$ as
follows
\begin{equation}\label{VMHD}
\left\{\aligned
&\partial_{t}\omega+(u\cdot\nabla)\omega+\Lambda^{2\alpha}\omega=(b\cdot\nabla) j,\\
&\partial_{t}j+(u\cdot\nabla)j+\mathcal{L}j=(b\cdot\nabla)\omega+T(\nabla u, \nabla b),
\endaligned \right.
\end{equation}
where
$$T(\nabla u, \nabla
b)=2\partial_{x_{1}}b_{1}(\partial_{x_{2}}u_{1}+\partial_{x_{1}}u_{2})
-2\partial_{x_{1}}u_{1}(\partial_{x_{2}}b_{1}+\partial_{x_{1}}b_{2}).$$

\vskip .1in
We now prove that the solution of (\ref{GMHD}) admits a global
$H^1$-bound as stated in the following lemma.
\begin{lemma}\label{tL32}
Let $(u_{0},b_{0})$ satisfy the conditions stated in Theorem \ref{Th1}, then it holds for any $t\in[0,\,T]$
\begin{eqnarray}\label{adt303}
\|\omega(t)\|_{L^{2}}^{2}+\|j(t)\|_{L^{2}}^{2}+ \int_{0}^{t}{(\|\Lambda^{\alpha}\omega(\tau)\|_{L^{2}}^{2}+
\|\mathcal{L}^{\frac{1}{2}}j(\tau)\|_{L^{2}}^{2})\,d\tau}\leq C(t,u_{0},b_{0}).
\end{eqnarray}
\end{lemma}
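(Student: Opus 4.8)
The plan is to run an $L^{2}$ energy estimate on the vorticity--current system \eqref{VMHD} and to close it with the refined logarithmic Gronwall inequality (Lemma \ref{Lem01khj}), whose iterated logarithm $G(k,\cdot)$ is matched to the growth \eqref{logcd1} of $g$. First I would test the $\omega$--equation with $\omega$ and the $j$--equation with $j$. The transport terms drop by $\nabla\cdot u=0$, the two coupling terms $\int(b\cdot\nabla j)\omega$ and $\int(b\cdot\nabla\omega)j$ cancel after integration by parts using $\nabla\cdot b=0$, and the dissipative terms give $\|\Lambda^{\alpha}\omega\|_{L^{2}}^{2}$ and $\|\mathcal{L}^{\frac12}j\|_{L^{2}}^{2}$. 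Writing $A:=\|\omega\|_{L^{2}}^{2}+\|j\|_{L^{2}}^{2}$ and $B:=\|\Lambda^{\alpha}\omega\|_{L^{2}}^{2}+\|\mathcal{L}^{\frac12}j\|_{L^{2}}^{2}$, this leaves the single nonlinear contribution
\begin{equation}
\tfrac12\tfrac{d}{dt}A+B=\int_{\mathbb{R}^{2}} T(\nabla u,\nabla b)\,j\,dx.\nonumber
\end{equation}

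The heart of the matter is to estimate this cubic term using only the logarithmically weakened magnetic diffusion. Since $T(\nabla u,\nabla b)$ is bilinear in $\nabla u$ and $\nabla b$, H\"older's inequality together with the identity $\|\nabla u\|_{L^{2}}=\|\omega\|_{L^{2}}$ and the Calder\'on--Zygmund bound $\|\nabla b\|_{L^{4}}\le C\|j\|_{L^{4}}$ gives $|\int T(\nabla u,\nabla b)\,j\,dx|\le C\|\omega\|_{L^{2}}\|j\|_{L^{4}}^{2}$. The factor $\|j\|_{L^{4}}^{2}$ must be absorbed by $B$ up to a logarithmic loss. I would use the two-dimensional embedding $\|j\|_{L^{4}}\le C\|j\|_{\dot{H}^{1/2}}$ and split the frequency integral $\|j\|_{\dot{H}^{1/2}}^{2}=\int|\xi|\,|\widehat{j}|^{2}\,d\xi$ at a level $N$: the low frequencies contribute $N\|j\|_{L^{2}}^{2}$, while on $|\xi|\ge N$ I would write $|\xi|=\frac{g(\xi)}{|\xi|}\cdot\frac{|\xi|^{2}}{g(\xi)}$ and invoke the monotonicity \eqref{chkhgtt1} (with $\epsilon=1$) to pull out the factor $\frac{g(N)}{N}$, obtaining $\|j\|_{\dot{H}^{1/2}}^{2}\le C\big(N\|j\|_{L^{2}}^{2}+\frac{g(N)}{N}\|\mathcal{L}^{\frac12}j\|_{L^{2}}^{2}\big)$. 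Balancing the two terms by choosing $N$ with $N^{2}\|j\|_{L^{2}}^{2}=g(N)\|\mathcal{L}^{\frac12}j\|_{L^{2}}^{2}$ yields $\|j\|_{L^{4}}^{2}\le C\sqrt{g(N)}\,\|j\|_{L^{2}}\|\mathcal{L}^{\frac12}j\|_{L^{2}}$, where $\ln N\lesssim\ln(A+B+\alpha_{0})$ because $g$ is slowly varying.

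Feeding this back, bounding $\|\omega\|_{L^{2}}\|j\|_{L^{2}}\le A$, and using \eqref{logcd1}, which forces $g(N)^{2}\lesssim G\big(k,\ln(A+B+\alpha_{0})\big)$ and hence $\sqrt{g(N)}\lesssim G^{1/4}$, I reach
\begin{equation}
\Big|\int_{\mathbb{R}^{2}} T(\nabla u,\nabla b)\,j\,dx\Big|\le C\,A\,G\big(k,\ln(A+B+\alpha_{0})\big)^{1/4}\,\|\mathcal{L}^{\tfrac12}j\|_{L^{2}}\le C\,A\,G^{1/4}B^{1/2}.\nonumber
\end{equation}
I would then recast the energy identity as $A'+B\le n\,G\big(k,\ln(A+B+\alpha_{0})\big)(A+\alpha_{0})$ with $n:=C\,G^{-3/4}B^{1/2}\frac{A}{A+\alpha_{0}}$, which obeys the structural bound $n\le C\,(A+B+\alpha_{0})^{1/2}$, i.e. the hypothesis of Lemma \ref{Lem01khj} with $\alpha=0$ and $\beta=\tfrac12\in[0,1)$ (the remaining inputs $l,m,f$ being zero or lower-order terms handled by \eqref{t301}). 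Lemma \ref{Lem01khj} then delivers $A(t)+\int_{0}^{t}B(s)\,ds\le C(t,u_{0},b_{0})$, which is exactly \eqref{adt303}.

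The main obstacle is this nonlinear estimate and the precise bookkeeping of the logarithmic loss: the exponent $\tfrac12$ in \eqref{logcd1} is what keeps $\sqrt{g(N)}$ no larger than $G^{1/4}$, and therefore keeps $\beta=\tfrac12<1$, so that the dissipation $B$ is never overwhelmed; any larger power in \eqref{logcd1} would push $\beta\ge1$ and break the Gronwall mechanism. Note that the velocity dissipation $\|\Lambda^{\alpha}\omega\|_{L^{2}}^{2}$ enters the $H^{1}$ bound only through its sign, so this a~priori estimate rests essentially on the magnetic diffusion alone, the condition $\alpha>0$ being needed for local well-posedness and the subsequent propagation of $H^{s}$ regularity rather than for \eqref{adt303} itself.
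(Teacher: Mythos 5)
Your skeleton is the paper's: the same energy identity with the same cancellation, the same reduction of the nonlinearity to $C\|\omega\|_{L^{2}}\|j\|_{L^{4}}^{2}$, a frequency splitting of $\|j\|_{L^{4}}^{2}$ adapted to the degenerate dissipation, and an appeal to Lemma \ref{Lem01khj}; your splitting via $\dot{H}^{1/2}(\mathbb{R}^{2})\hookrightarrow L^{4}$ and a cut at $N$ is a harmless variant of the paper's Littlewood--Paley argument. The genuine gap is in how you close the Gronwall step. You apply Lemma \ref{Lem01khj} with $n(t)=C\,G^{-3/4}B^{1/2}A/(A+\alpha_{0})\lesssim B(t)^{1/2}$. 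But Lemma \ref{Lem01khj} assumes $n$ is \emph{integrable} on $(0,T)$, and its conclusion constant necessarily depends on $\int_{0}^{T}n\,dt$; the structural hypothesis $n\leq K(A+\alpha_{0})^{\alpha}(A+B+\alpha_{0})^{\beta}$ with $\beta<1$ supplements, but cannot replace, that integrability. Indeed, if the constant were independent of $\int_{0}^{T}n\,dt$ the lemma would be false: take $l=m=f=0$, $B\equiv 0$ and $n=(A+\alpha_{0})^{1/2}$ (which satisfies the structural bound with $\beta=\frac12$); then $A'\leq (A+\alpha_{0})^{3/2}G\big(k,\ln(A+\alpha_{0})\big)$ is compatible with finite-time blow-up of $A$. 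Your $n$ contains $B^{1/2}$, i.e.\ precisely the dissipation $\|\Lambda^{\alpha}\omega\|_{L^{2}}^{2}+\|\mathcal{L}^{\frac{1}{2}}j\|_{L^{2}}^{2}$ whose time integrability is the content of \eqref{adt303}, so the application is circular. The paper is engineered to avoid exactly this: it bounds the low frequencies by $2^{M}g(2^{M})\|\mathcal{L}^{\frac{1}{2}}b\|_{L^{2}}^{2}$ rather than by $2^{M}\|j\|_{L^{2}}^{2}$, and then uses Young's inequality to absorb the full power $\|\mathcal{L}^{\frac{1}{2}}j\|_{L^{2}}^{2}$ into the left-hand side, so that the surviving coefficient is $n(t)=(1+\|\mathcal{L}^{\frac{1}{2}}b\|_{L^{2}})^{2}$, which is a priori integrable in time by the basic energy law \eqref{t301} (and also $\leq CA$). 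The price of Young's inequality is that $g(2^{M})$ gets squared, and that is exactly where the exponent $\frac12$ in \eqref{logcd1} is used ($g^{2}\leq \widetilde{C}^{2}G$, with $G$ to the first, Osgood-critical, power). A telltale symptom that your bookkeeping is off: your scheme never truly uses the exponent $\frac12$ --- redo your computation with any power $\tau$ in \eqref{logcd1} and you still get $n\lesssim B^{1/2}(A+B+\alpha_{0})^{\epsilon}$, i.e.\ $\beta<1$, so your argument would ``prove'' the case that Remark 1.2 of the paper explicitly leaves open; your claim that larger powers push $\beta\geq 1$ is not what your own algebra gives.

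There is a second, smaller hole: the assertion that $\ln N\lesssim \ln(A+B+\alpha_{0})$ ``because $g$ is slowly varying.'' Your balancing gives $N\sim\sqrt{g(N)}\,\|\mathcal{L}^{\frac{1}{2}}j\|_{L^{2}}/\|j\|_{L^{2}}$, and when $\|j\|_{L^{2}}$ is small this ratio is not controlled by any power of $A+B+\alpha_{0}$: for a tiny-amplitude packet at enormous frequency, $A+B$ stays of order $\alpha_{0}$ while $N$, hence $g(N)$, is arbitrarily large, so $g(N)^{2}\lesssim G\big(k,\ln(A+B+\alpha_{0})\big)$ fails. The paper's optimization $2^{M}\approx\|\mathcal{L}^{\frac{1}{2}}j\|_{L^{2}}/(1+\|\mathcal{L}^{\frac{1}{2}}b\|_{L^{2}})$ carries the ``$1+$'' precisely as a safeguard, giving $2^{M}\leq B^{1/2}$ unconditionally. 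This second defect is repairable (balance against $1+\|j\|_{L^{2}}^{2}$, or use that $\|j\|_{L^{2}}$ times iterated logarithms of $1/\|j\|_{L^{2}}$ is bounded), but the circularity described above is not, and it is fatal to the proof as written.
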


\begin{rem}\rm
After checking the proof of Lemma \ref{tL32} below, one may find that \eqref{adt303} is still true for the case $\alpha=0$.
\end{rem}

\begin{proof}
Taking the inner products of $(\ref{VMHD})_{1}$ with $\omega$, $(\ref{VMHD})_{2}$ with $j$, adding them up and using the incompressible condition, we find
\begin{align}
\frac{1}{2}\frac{d}{dt}(\|\omega(t)\|_{L^{2}}^{2}+\|j(t)\|_{L^{2}}^{2})
+\|\Lambda^{\alpha}\omega\|_{L^{2}}^{2}+\|\mathcal{L}^{\frac{1}{2}}j\|_{L^{2}}^{2}
&=\int_{\mathbb{R}^{2}} {T(\nabla u, \nabla
b)j
\,dx},\nonumber
\end{align}
where we have used the following fact
$$\int_{\mathbb{R}^{2}} {(b \cdot \nabla j) \omega
\,dx}+\int_{\mathbb{R}^{2}}  { (b \cdot \nabla\omega) j\,dx}=0.$$
By the simple interpolation inequality, it yields
\begin{align}
\int_{\mathbb{R}^{2}} {T(\nabla u, \nabla
b)j
\,dx}&\leq C\|\nabla u\|_{L^{2}}\|\nabla b\|_{L^{4}}\|j\|_{L^{4}}\nonumber\\
&\leq C\|\omega\|_{L^{2}}\|j\|_{L^{4}}^{2},\nonumber
\end{align}
which leads to
\begin{align}
\frac{1}{2}\frac{d}{dt}(\|\omega(t)\|_{L^{2}}^{2}+\|j(t)\|_{L^{2}}^{2})
+\|\Lambda^{\alpha}\omega\|_{L^{2}}^{2}+\|\mathcal{L}^{\frac{1}{2}}j\|_{L^{2}}^{2}
\leq C\|\omega\|_{L^{2}}\|j\|_{L^{4}}^{2}.\nonumber
\end{align}
In view of the Littlewood-Paley decomposition and the Bernstein inequality (see Lemma \ref{vfgty8xc}), we obtain
\begin{align}
\|j\|_{L^{4}}^{2}&\leq \|j\|_{B_{4,2}^{0}}^{2}\nonumber\\
&\leq \|\Delta_{-1}j\|_{L^{4}}^{2}+\sum_{k\geq0}\|\Delta_{k}j\|_{L^{4}}^{2}
\nonumber\\
&\leq C\|b\|_{L^{2}}^{2}+\sum_{0\leq k\leq M}\|\Delta_{k}j\|_{L^{4}}^{2}+\sum_{k> M}\|\Delta_{k}j\|_{L^{4}}^{2}
\nonumber\\
&\leq C\|b\|_{L^{2}}^{2}+\sum_{0\leq k\leq M}2^{k}\|\Delta_{k}j\|_{L^{2}}^{2}+ \sum_{k> M}2^{-k}\|\Delta_{k}\Lambda j\|_{L^{2}}^{2}
\nonumber\\
&= C\|b\|_{L^{2}}^{2}+\sum_{0\leq k\leq M}2^{k}\left\|\Delta_{k}\left(\sqrt{g(\Lambda)}\frac{j}{\sqrt{g(\Lambda)}}
\right)\right\|_{L^{2}}^{2}\nonumber\\
&\quad+ \sum_{k> M}2^{-k}\left\|\Delta_{k}\left(\sqrt{g(\Lambda)}\frac{\Lambda }{\sqrt{g(\Lambda)}}j
\right)\right\|_{L^{2}}^{2}
\nonumber\\
&\leq C\|b\|_{L^{2}}^{2}+\sum_{0\leq k\leq M}2^{k} g(2^{k}) \left\|\Delta_{k}\left(\frac{\nabla b}{\sqrt{g(\Lambda)}}
\right)\right\|_{L^{2}}^{2}\nonumber\\
&\quad+ \sum_{k> M}2^{-k} g(2^{k}) \left\|\Delta_{k}\left(\frac{\Lambda }{\sqrt{g(\Lambda)}}j
\right)\right\|_{L^{2}}^{2}
\nonumber\\
&\leq C\|b\|_{L^{2}}^{2}+C
\sum_{0\leq k\leq M}2^{k} g(2^{k})
\|\mathcal{L}^{\frac{1}{2}}b\|_{L^{2}}^{2}+ C\sum_{k> M}2^{-k} g(2^{k})
\|\mathcal{L}^{\frac{1}{2}}j\|_{L^{2}}^{2}
\nonumber\\
&\leq C\|b\|_{L^{2}}^{2}+C
\sum_{0\leq k\leq M}2^{k} g(2^{k})
\|\mathcal{L}^{\frac{1}{2}}b\|_{L^{2}}^{2}+ C\sum_{k> M}2^{-(1-\epsilon)k} \frac{g(2^{k})}{2^{k\epsilon}}
\|\mathcal{L}^{\frac{1}{2}}j\|_{L^{2}}^{2}
\nonumber\\
&\leq C\|b\|_{L^{2}}^{2}+C
\sum_{0\leq k\leq M}2^{k} g(2^{k})
\|\mathcal{L}^{\frac{1}{2}}b\|_{L^{2}}^{2}+ C\sum_{k> M}2^{-(1-\epsilon)k} \frac{g(2^{M})}{2^{M\epsilon}}
\|\mathcal{L}^{\frac{1}{2}}j\|_{L^{2}}^{2}\nonumber\\
&\leq C\|b\|_{L^{2}}^{2}+C
2^{M} g(2^{M})
\|\mathcal{L}^{\frac{1}{2}}b\|_{L^{2}}^{2}+ C2^{-(1-\epsilon)M} \frac{g(2^{M})}{2^{M\epsilon}}
\|\mathcal{L}^{\frac{1}{2}}j\|_{L^{2}}^{2}
\nonumber\\
&\leq C+C
2^{M} g(2^{M})
\|\mathcal{L}^{\frac{1}{2}}b\|_{L^{2}}^{2}+ C2^{-M}g(2^{M})
\|\mathcal{L}^{\frac{1}{2}}j\|_{L^{2}}^{2},\nonumber
\end{align}
where we have used the property \eqref{chkhgtt1}. As a result, one gets
\begin{eqnarray}
\|j\|_{L^{4}}^{2}\leq C+C
2^{M} g(2^{M})
\|\mathcal{L}^{\frac{1}{2}}b\|_{L^{2}}^{2}+ C2^{-M}g(2^{M})
\|\mathcal{L}^{\frac{1}{2}}j\|_{L^{2}}^{2}.\nonumber
\end{eqnarray}
Therefore, we get
\begin{align}\label{fghkhk4}
&\frac{1}{2}\frac{d}{dt}(\|\omega(t)\|_{L^{2}}^{2}+\|j(t)\|_{L^{2}}^{2})
+\|\Lambda^{\alpha}\omega\|_{L^{2}}^{2}+\|\mathcal{L}^{\frac{1}{2}}j\|_{L^{2}}^{2}
\nonumber\\
&\leq C\|\omega\|_{L^{2}}\left(1+
2^{M} g(2^{M})
\|\mathcal{L}^{\frac{1}{2}}b\|_{L^{2}}^{2}+  2^{-M}g(2^{M})
\|\mathcal{L}^{\frac{1}{2}}j\|_{L^{2}}^{2}\right)
\nonumber\\
&\leq C\|\omega\|_{L^{2}}\left(1+
2^{M} g(2^{M})
(1+\|\mathcal{L}^{\frac{1}{2}}b\|_{L^{2}})^{2}+  2^{-M}g(2^{M})
\|\mathcal{L}^{\frac{1}{2}}j\|_{L^{2}}^{2}\right).
\end{align}
We take $M$ large enough such that
$$
2^{M} g(2^{M})
(1+\|\mathcal{L}^{\frac{1}{2}}b\|_{L^{2}})^{2}\approx2^{-M}g(2^{M})
\|\mathcal{L}^{\frac{1}{2}}j\|_{L^{2}}^{2},$$
which implies
$$2^{M}\approx\frac{\|\mathcal{L}^{\frac{1}{2}}j\|_{L^{2}}}
{1+\|\mathcal{L}^{\frac{1}{2}}b\|_{L^{2}}}.$$
It follows from \eqref{fghkhk4} that
\begin{align}
&\frac{1}{2}\frac{d}{dt}(\|\omega(t)\|_{L^{2}}^{2}+\|j(t)\|_{L^{2}}^{2})
+\|\Lambda^{\alpha}\omega\|_{L^{2}}^{2}+\|\mathcal{L}^{\frac{1}{2}}j\|_{L^{2}}^{2}
\nonumber\\
&\leq  C\|\omega\|_{L^{2}}+C\|\omega\|_{L^{2}}
\|\mathcal{L}^{\frac{1}{2}}j\|_{L^{2}} g\left(\frac{\|\mathcal{L}^{\frac{1}{2}}j\|_{L^{2}}}
{1+\|\mathcal{L}^{\frac{1}{2}}b\|_{L^{2}}}\right)
(1+\|\mathcal{L}^{\frac{1}{2}}b\|_{L^{2}})
\nonumber\\
&\leq \frac{1}{2}
\|\mathcal{L}^{\frac{1}{2}}j\|_{L^{2}}^{2}+ C\|\omega\|_{L^{2}}+C\|\omega\|_{L^{2}}^{2} g^{2}\left(\frac{\|\mathcal{L}^{\frac{1}{2}}j\|_{L^{2}}}
{1+\|\mathcal{L}^{\frac{1}{2}}b\|_{L^{2}}}\right)
(1+\|\mathcal{L}^{\frac{1}{2}}b\|_{L^{2}})^{2},\nonumber
\end{align}
which leads to
\begin{align}\label{vbklhgw8}
& \frac{d}{dt}(\|\omega(t)\|_{L^{2}}^{2}+\|j(t)\|_{L^{2}}^{2})
+\|\Lambda^{\alpha}\omega\|_{L^{2}}^{2}+\|\mathcal{L}^{\frac{1}{2}}j\|_{L^{2}}^{2}
\nonumber\\
&\leq  C\|\omega\|_{L^{2}}+ C\|\omega\|_{L^{2}}^{2} g^{2}\left(\frac{\|\mathcal{L}^{\frac{1}{2}}j\|_{L^{2}}}
{1+\|\mathcal{L}^{\frac{1}{2}}b\|_{L^{2}}}\right)
(1+\|\mathcal{L}^{\frac{1}{2}}b\|_{L^{2}})^{2}.
\end{align}
For simplicity, we denote
$$G(\xi)\triangleq\ln(\sigma+|\xi|)\ln\ln(\sigma+|\xi|)\cdot\cdot\cdot
\underbrace{\ln\ln\cdot\cdot\cdot\ln}_{k\,\rm{times}}(\sigma+|\xi|).$$
We thus deduce from \eqref{vbklhgw8} and \eqref{logcd1} that
\begin{align}\label{vbklhgw9}
& \frac{d}{dt}(\|\omega(t)\|_{L^{2}}^{2}+\|j(t)\|_{L^{2}}^{2})
+\|\Lambda^{\alpha}\omega\|_{L^{2}}^{2}+\|\mathcal{L}^{\frac{1}{2}}j\|_{L^{2}}^{2}
\nonumber\\
&\leq   C\|\omega\|_{L^{2}}+C\|\omega\|_{L^{2}}^{2} G\left(\frac{\|\mathcal{L}^{\frac{1}{2}}j\|_{L^{2}}}
{1+\|\mathcal{L}^{\frac{1}{2}}b\|_{L^{2}}}\right)
(1+\|\mathcal{L}^{\frac{1}{2}}b\|_{L^{2}})^{2}
\nonumber\\
&\leq  C\|\omega\|_{L^{2}}+ C\|\omega\|_{L^{2}}^{2} G\left( \|\mathcal{L}^{\frac{1}{2}}j\|_{L^{2}} \right)
(1+\|\mathcal{L}^{\frac{1}{2}}b\|_{L^{2}})^{2}
\nonumber\\
&\leq C\|\omega\|_{L^{2}}+ C (1+\|\mathcal{L}^{\frac{1}{2}}b\|_{L^{2}})^{2}\|\omega\|_{L^{2}}^{2} \nonumber\\
&\quad \times \ln(\sigma+\|\mathcal{L}^{\frac{1}{2}}j\|_{L^{2}})
\ln\ln(\sigma+\|\mathcal{L}^{\frac{1}{2}}j\|_{L^{2}})\cdot\cdot\cdot
\underbrace{\ln\ln\cdot\cdot\cdot\ln}_{k\,\rm{times}}
(\sigma+\|\mathcal{L}^{\frac{1}{2}}j\|_{L^{2}}).
\end{align}
We denote
$$A(t)\triangleq \sigma+\|\omega(t)\|_{L^{2}}^{2}+\|j(t)\|_{L^{2}}^{2},\quad
B(t)\triangleq \sigma+\|\Lambda^{\alpha}\omega(t)\|_{L^{2}}^{2}
+\|\mathcal{L}^{\frac{1}{2}}j(t)\|_{L^{2}}^{2},$$
$$n(t)\triangleq (1+\|\mathcal{L}^{\frac{1}{2}}b(t)\|_{L^{2}})^{2}.$$
Then, one derives from \eqref{vbklhgw9} that
\begin{align}\label{vbklhgw10}
 \frac{d}{dt}A(t)+B(t)
&\leq   n(t)A(t)\ln(\sigma+B(t))
\ln\ln(\sigma+B(t))\cdot\cdot\cdot
\underbrace{\ln\ln\cdot\cdot\cdot\ln}_{k\,\rm{times}}
(\sigma+B(t)).
\end{align}
It is easy to check that the function $n(t)$ is in $L_{loc}^{1}(\mathbb{R}_{+})$. Moreover, it obeys
\begin{align}
n(t)&= \left(1+\Big\|\widehat{\mathcal{L}^{\frac{1}{2}}b}\Big\|_{L^{2}}\right)^{2}
\nonumber\\
&= C\left(1+\left\|\frac{|\xi|}{\sqrt{g(\xi)}} \,\widehat{b}(\xi)\right\|_{L^{2}}\right)^{2}
\nonumber\\
&\leq C\left(1+\left\||\xi|\widehat{b}(\xi)\right\|_{L^{2}}\right)^{2}
\nonumber\\
&\leq C\left(1+\left\|\nabla b\right\|_{L^{2}}\right)^{2}
\nonumber\\
&\leq C\left(1+\left\|j\right\|_{L^{2}}\right)^{2}
\nonumber\\
&\leq CA(t),\nonumber
\end{align}
which shows
\begin{align}\label{vbklhgw11}
n(t) \leq CA(t).
\end{align}
Keeping in mind \eqref{vbklhgw11} and applying the refined logarithmic Gronwall inequality (see Lemma \ref{Lem01khj}) to \eqref{vbklhgw10}, it is not hard to obtain (\ref{adt303}). Therefore, we complete the proof of Lemma \ref{tL32}.
\end{proof}

\vskip .1in
Thanks to \eqref{adt303} and the $\dot{H}^{r}$-estimate of $j$, we are able to show the following estimate, whose proof can be performed by repeating the proof of \cite[Lemma 3.4]{Zhao22}.
\begin{lemma} \label{lkjmnea33}
Let $\alpha>0$, then it holds for any $r\in(0,\alpha)$
\begin{eqnarray}\label{tcmnhqq1}
\|\Lambda^{r}j(t)\|_{L^{2}}^{2}+ \int_{0}^{t}{
\|\mathcal{L}^{\frac{1}{2}}\Lambda^{r}j(\tau)\|_{L^{2}}^{2}\,d\tau}\leq C(t,u_{0},b_{0}).
\end{eqnarray}
\end{lemma}

\begin{rem}\rm
In the case when $\alpha=0$, appealing to other approach different from  \cite[Lemma 3.4]{Zhao22}, we are still able to show that (see Theorem \ref{Th2} for details)
$$\|\Lambda^{r}j(t)\|_{L^{2}}+\|\mathcal{L}b(t)\|_{L^{2}}\leq C(t,u_{0},b_{0})$$
for any $r\in(0,1)$. This indicates that as regards Lemma \ref{lkjmnea33}, the role of $\alpha>0$ is not critical.
\end{rem}

\vskip .1in
The following result is an easy consequence of (\ref{adt303}) and \eqref{tcmnhqq1}.
\begin{lemma}
Let $\alpha>0$, then it holds for any $r\in(0,\alpha)$
\begin{eqnarray}\label{t303}
\|\omega(t)\|_{L^{2}}^{2}+\|j(t)\|_{H^{r}}^{2}+\|b(t)\|_{L^{\infty}}^{2}+ \int_{0}^{t}{(\|\Lambda^{\alpha}\omega\|_{L^{2}}^{2}+
\|\mathcal{L}^{\frac{1}{2}}j\|_{H^{r}}^{2})(\tau)\,d\tau}\leq C(t,u_{0},b_{0}).
\end{eqnarray}
\end{lemma}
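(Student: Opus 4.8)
The plan is to deduce (\ref{t303}) by simply adding the two previously established estimates (\ref{adt303}) and (\ref{tcmnhqq1}), after splitting the inhomogeneous Sobolev norms appearing in (\ref{t303}) into their $L^{2}$ and homogeneous parts. The only structural fact I need is the standard equivalence, valid for every $r\geq0$,
$$\|f\|_{H^{r}}^{2}\approx \|f\|_{L^{2}}^{2}+\|\Lambda^{r}f\|_{L^{2}}^{2},$$
which holds on the Fourier side because $1+|\xi|^{2r}\approx(1+|\xi|^{2})^{r}$ for $r\geq0$.

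First I would apply this equivalence to $j(t)$ itself, obtaining
$$\|j(t)\|_{H^{r}}^{2}\approx\|j(t)\|_{L^{2}}^{2}+\|\Lambda^{r}j(t)\|_{L^{2}}^{2}.$$
Here the $L^{2}$ part is controlled by (\ref{adt303}) and the homogeneous part $\|\Lambda^{r}j(t)\|_{L^{2}}^{2}$ by (\ref{tcmnhqq1}), both uniformly on $[0,T]$ since $r\in[0,\alpha)$. Next I would treat the time-integrated dissipation term. Because $\mathcal{L}^{\frac{1}{2}}$ and $\Lambda^{r}$ are both radial Fourier multipliers they commute, so the same splitting gives
$$\|\mathcal{L}^{\frac{1}{2}}j(\tau)\|_{H^{r}}^{2}\approx\|\mathcal{L}^{\frac{1}{2}}j(\tau)\|_{L^{2}}^{2}+\|\mathcal{L}^{\frac{1}{2}}\Lambda^{r}j(\tau)\|_{L^{2}}^{2}.$$
Integrating in time, the first term is bounded by $\int_{0}^{t}\|\mathcal{L}^{\frac{1}{2}}j(\tau)\|_{L^{2}}^{2}\,d\tau$ from (\ref{adt303}), while the second is exactly the dissipative quantity controlled by (\ref{tcmnhqq1}). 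Finally the remaining two terms $\|\omega(t)\|_{L^{2}}^{2}$ and $\int_{0}^{t}\|\Lambda^{\alpha}\omega(\tau)\|_{L^{2}}^{2}\,d\tau$ appear verbatim in (\ref{adt303}). Summing all contributions yields (\ref{t303}).

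There is no genuine obstacle here, which is why the result is described as an easy consequence of the two prior estimates; the argument is purely a recombination of already-available bounds. The only point deserving a word of care is that the commutation of $\mathcal{L}^{\frac{1}{2}}$ with $\Lambda^{r}$ and the norm equivalence hold uniformly in $\tau$, and both are immediate on the Fourier side because the symbols $|\xi|/\sqrt{g(\xi)}$ and $|\xi|^{r}$ are radial and nonnegative, so no positivity or lower-bound hypothesis on $g$ beyond condition (a) is invoked.
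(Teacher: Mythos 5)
Your proposal is correct and is precisely the argument the paper intends: the paper states this lemma without proof as an ``easy consequence'' of \eqref{adt303} and \eqref{tcmnhqq1}, and your splitting $\|f\|_{H^{r}}^{2}\approx\|f\|_{L^{2}}^{2}+\|\Lambda^{r}f\|_{L^{2}}^{2}$ together with the commutation of the Fourier multipliers $\mathcal{L}^{\frac{1}{2}}$ and $\Lambda^{r}$ is exactly the recombination that justifies it.
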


\vskip .1in
To go on, we will encounter even greater difficulty. In order to introduce the reader to the main difficulty, let us consider the endpoint case, namely $\alpha=0$. In this case, our natural target is to derive the $L^{\infty}$-bound of $\omega$. Regarding the vorticity equation, to achieve this goal, the significant quantity that one needs to bound in order to get global existence is the $L^{\infty}$-norm of $\nabla j$. Unfortunately, the main difficulty encountered for global existence is due to the lack of strong dissipation in the magnetic equation because the magnetic diffusion is reducing about logarithmic diffusion from standard Laplacian diffusion. Consequently, we don't see how to estimate in a suitable way the quantity $\int_{0}^{t}\|\nabla j(\tau)\|_{L^{\infty}}\,d\tau$. From a mathematical point of view, it is extremely hard to obtain the $L^{\infty}$-norm of $\nabla j$ in this case. To say the least, deriving the $L^{q}$-bound of $\omega$ with $q\in(2,\infty)$ is also not a trivial task even the dissipation in the velocity equation is $\alpha>0$. As a matter of fact, this is our next main task. More precisely, we will show the $L^{q}$-bound of $\omega$ for any $q\in(2,\infty)$ which is a crucial component of this paper and plays an important role in obtaining the global bound for $(u,\,b)$ in $H^{s}$ with $s\geq2$. It is worthwhile to emphasize that the condition $\alpha>0$ is crucial to guarantee the validity of \eqref{t311} below.
\begin{lemma}\label{L35}
Let $(u_{0},b_{0})$ satisfy the conditions stated in Theorem \ref{Th1}. If $\alpha>0$, then it holds for any $t\in[0,\,T]$ and for any $q\in (2,\infty)$
\begin{eqnarray}\label{t311}
\|\omega(t)\|_{L^{q}}+\int_{0}^{t}{
\|\nabla j(\tau)\|_{L^{q}} \,d\tau}\leq C(t,u_{0},b_{0}).
\end{eqnarray}
In particular, it holds
\begin{eqnarray}\label{tffgklq1}
\int_{0}^{t}{
\|\nabla b(\tau)\|_{L^{\infty}} \,d\tau}\leq C(t,u_{0},b_{0}).
\end{eqnarray}
\end{lemma}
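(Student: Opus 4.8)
The plan is to close a coupled estimate for $\|\omega\|_{L^{q}}$ and $\int_{0}^{t}\|\nabla j\|_{L^{q}}\,d\tau$: an $L^{q}$ energy estimate for the vorticity, together with a Duhamel representation for the current in which the kernel bounds of Lemmas \ref{tL33}--\ref{khrdlyvhbg} compensate for the missing full Laplacian diffusion. First I would treat the vorticity. Testing $(\ref{VMHD})_{1}$ against $|\omega|^{q-2}\omega$, the transport term drops by $\nabla\cdot u=0$ and the dissipation $\int_{\mathbb{R}^{2}}\Lambda^{2\alpha}\omega\,|\omega|^{q-2}\omega\,dx\ge 0$ contributes favorably (nonnegative by \eqref{dfoklp88} when $\alpha\in(0,1)$, and with stronger bounds available when $\alpha\ge 1$), so that $\frac{d}{dt}\|\omega\|_{L^{q}}\le\|(b\cdot\nabla)j\|_{L^{q}}\le\|b\|_{L^{\infty}}\|\nabla j\|_{L^{q}}$. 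Since \eqref{t303} gives $\|j\|_{H^{r}}\le C$ for any $r\in(0,\alpha)$, the Biot--Savart law yields $\|b\|_{H^{1+r}}\le C$, and the 2D embedding $H^{1+r}\hookrightarrow L^{\infty}$ shows $\|b\|_{L^{\infty}}\le C$. Hence everything reduces to bounding $\int_{0}^{t}\|\nabla j\|_{L^{q}}\,d\tau$ through
\begin{equation}
\|\omega(t)\|_{L^{q}}\le\|\omega_{0}\|_{L^{q}}+C\int_{0}^{t}\|\nabla j(\tau)\|_{L^{q}}\,d\tau.\nonumber
\end{equation}

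For the current I would use the Duhamel formula $j(\tau)=K(\tau)*j_{0}+\int_{0}^{\tau}K(\tau-s)*\tilde F(s)\,ds$ with $\tilde F=-\nabla\cdot(uj)+\nabla\cdot(b\omega)+T(\nabla u,\nabla b)$ (the first two written in divergence form via $\nabla\cdot u=\nabla\cdot b=0$), apply $\nabla$, and integrate the $L^{q}$ norm in time. The linear part is harmless because $j_{0}\in H^{s}$. For the two divergence terms I would move both derivatives onto the kernel and write, with Riesz transforms $\mathcal{R}$ bounded on $L^{q}$, the bound $\|\nabla^{2}K(\sigma)*H\|_{L^{q}}\le\|\Lambda^{2-\delta}K(\sigma)\|_{L^{1}}\|\Lambda^{\delta}H\|_{L^{q}}$ for $H\in\{uj,\,b\omega\}$; after Minkowski and Fubini the kernel factor integrates to $\int_{0}^{t}\|\Lambda^{2-\delta}K(\sigma)\|_{L^{1}}\,d\sigma\le C(t)$ by Lemma \ref{khrdlyvhbg}. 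The stretching term $T(\nabla u,\nabla b)$ carries only one free derivative on the kernel and is controlled with $\|\Lambda^{1-\delta}K(\sigma)\|_{L^{2}}$ from the same lemma. Thus $\int_{0}^{t}\|\nabla j\|_{L^{q}}\,d\tau$ is dominated by $C(t)+C(t)\int_{0}^{t}\big(\|\Lambda^{\delta}(uj)\|_{L^{q}}+\|\Lambda^{\delta}(b\omega)\|_{L^{q}}+\cdots\big)\,ds$.

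It remains to estimate these fractional products. By the fractional Leibniz rule, $\|\Lambda^{\delta}(b\omega)\|_{L^{q}}\lesssim\|\Lambda^{\delta}b\|_{L^{\infty}}\|\omega\|_{L^{q}}+\|b\|_{L^{\infty}}\|\Lambda^{\delta}\omega\|_{L^{q}}$, and similarly for $uj$, where the factors $\|\Lambda^{\delta}b\|_{L^{\infty}}$, $\|u\|_{L^{\infty}}$ are bounded via \eqref{t303} and Biot--Savart (the $L^{\infty}$ bound for $u$ coming with a harmless factor of $\|\omega\|_{L^{q}}$). The first summand reproduces $\|\omega\|_{L^{q}}$, which, fed back into the vorticity inequality above, yields a Gr\"onwall structure closed exactly as in \eqref{vbklhgw11} via $n\le CA$. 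The genuinely delicate summand is $\|b\|_{L^{\infty}}\|\Lambda^{\delta}\omega\|_{L^{q}}$, where a full derivative has fallen on $\omega$, a quantity only known in $L^{q}$: I would take $\delta>0$ small and absorb it by interpolating with the dissipation, using $\int_{0}^{t}\|\Lambda^{\alpha}\omega\|_{L^{2}}^{2}\,d\tau<\infty$ from \eqref{t303} and a H\"older argument in time, and the analogous near-$\nabla j$ control $\int_{0}^{t}\|\mathcal{L}^{\frac{1}{2}}j\|_{H^{r}}^{2}\,d\tau<\infty$ for the $\Lambda^{\delta}(uj)$ term. This borderline balance --- absorbing almost two derivatives while $\omega$ and $j$ carry only a fractional surplus --- is the main obstacle, and it is exactly what forces the logarithmic smallness \eqref{logcd1} of $g$, entering through the time-integrability of $\|\Lambda^{2-\delta}K\|_{L^{1}}$ in Lemma \ref{khrdlyvhbg}.

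Once $\|\omega\|_{L^{q}}+\int_{0}^{t}\|\nabla j\|_{L^{q}}\,d\tau\le C(t,u_{0},b_{0})$, that is \eqref{t311}, is established, the in particular claim \eqref{tffgklq1} follows by Calder\'{o}n--Zygmund theory and Sobolev embedding: $\|\nabla^{2}b\|_{L^{q}}\approx\|\nabla j\|_{L^{q}}$ and $\|\nabla b\|_{L^{q}}\approx\|j\|_{L^{q}}$, the latter controlled pointwise in time by $\|j\|_{L^{2}}$ and $\|\nabla j\|_{L^{q}}$ through Gagliardo--Nirenberg, so that the 2D embedding $W^{1,q}\hookrightarrow L^{\infty}$ for $q>2$ gives $\int_{0}^{t}\|\nabla b\|_{L^{\infty}}\,d\tau\le C\int_{0}^{t}\big(\|j\|_{L^{q}}+\|\nabla j\|_{L^{q}}\big)\,d\tau<\infty$.
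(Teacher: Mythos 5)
Your overall architecture (Duhamel representation for the current with the kernel bounds of Lemmas \ref{tL33}--\ref{khrdlyvhbg}, an $L^{q}$ energy estimate for $\omega$, fractional product estimates, and a Gronwall closure) parallels the paper's proof, and you correctly single out $\|\Lambda^{\delta}\omega\|_{L^{q}}$ as the delicate term. But your proposed resolution of that term has a genuine gap. You plan to control $\int_{0}^{t}\|\Lambda^{\delta}\omega\|_{L^{q}}\,d\tau$ by interpolating against the $L^{2}$-based dissipation $\int_{0}^{t}\|\Lambda^{\alpha}\omega\|_{L^{2}}^{2}\,d\tau<\infty$ from \eqref{t303}. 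A Sobolev-index count shows this cannot work for all $q\in(2,\infty)$: in 2D, an inequality of the form $\|\Lambda^{\delta}\omega\|_{L^{q}}\lesssim \|\omega\|_{L^{2}}^{1-\theta}\|\Lambda^{\alpha}\omega\|_{L^{2}}^{\theta}$ forces $\delta-\tfrac{2}{q}=-1+\theta\alpha$, i.e. $\theta=\frac{\delta+1-2/q}{\alpha}$, and $\theta\leq1$ requires $q\leq\frac{2}{1+\delta-\alpha}$. For small $\alpha$ (the whole point of the theorem) this confines you to $q$ near $2$, whereas the lemma is claimed, and genuinely needed, for arbitrarily large $q$: the proof of Theorem \ref{Th1} uses the factor $\|\omega\|_{L^{q}}^{\frac{\alpha q}{\alpha q-1}}$, which requires $q>1/\alpha$. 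Your route would suffice for the ``in particular'' statement \eqref{tffgklq1}, which needs only one $q>2$, but not for \eqref{t311} as stated.

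The missing idea is that the lower bound \eqref{dfoklp88} must be retained as a genuine dissipation term, not merely invoked for nonnegativity as you do. The paper keeps $\int_{\mathbb{R}^{2}}\Lambda^{2\alpha}\omega(|\omega|^{q-2}\omega)\,dx\geq C\|\omega\|_{\dot{B}_{q,q}^{\frac{2\alpha}{q}}}^{q}$ in the $L^{q}$ vorticity estimate, which upon time integration gives the $L^{q}$-based bound $\|\omega\|_{L_{t}^{q}\dot{B}_{q,q}^{2\alpha/q}}\leq C+C\|\nabla j\|_{L_{t}^{1}L^{q}}$. Interpolating between $L^{2}$ and $\dot{B}_{q,q}^{2\alpha/q}$ (valid precisely when $\delta<\frac{2\alpha}{q}$) then yields
\begin{equation}
\|\Lambda^{\delta}\omega\|_{L_{t}^{1}L^{q}}\leq C+C\|\nabla j\|_{L_{t}^{1}L^{q}}^{\theta'},\qquad \theta'=\frac{(1+\delta)q-2}{q-2(1-\alpha)}<1,\nonumber
\end{equation}
so this term can be absorbed into the left-hand side of the $\nabla j$ estimate by Young's inequality; only then does the Gronwall argument close. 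Note also that this mechanism, and not the time-integrability of $\|\Lambda^{2-\delta}K\|_{L^{1}}$, is what resolves the ``borderline balance'' you point to: the kernel integrability of Lemma \ref{khrdlyvhbg} (where the logarithmic condition \eqref{logcd1} enters) is needed in both your account and the paper's, but it is the $\dot{B}_{q,q}^{2\alpha/q}$ dissipation, living at $L^{q}$ integrability, that rescues the $\Lambda^{\delta}\omega$ term for large $q$.
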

\begin{proof}
It follows from $\eqref{VMHD}_{2}$ that
$$\partial_{t}(\Lambda j)+\mathcal{L}\Lambda j=\Lambda\partial_{x_{i}}(b_{i}\omega-u_{i}j)+\Lambda T(\nabla u, \nabla b),$$
which along with its Fourier transform reads
$$\partial_{t}\widehat{(\Lambda j)}(\xi,t)+\frac{|\xi|^{2}}{g(\xi)}\widehat{(\Lambda j)}(\xi,t)=\mathcal{F}(
\Lambda\partial_{x_{i}}(b_{i}\omega-u_{i}j))(\xi,t)+\mathcal{F}(\Lambda T(\nabla u, \nabla b))(\xi,t).$$
As a result, we have
\begin{align}\label{tffmpwa8}
\widehat{(\Lambda j)}(\xi,t)=&e^{\frac{-t|\xi|^{2}}{g(\xi)}}\widehat{(\Lambda j_{0})}(\xi)+\int_{0}^{t}e^{\frac{-(t-\tau)|\xi|^{2}}{g(\xi)}}\mathcal{F}(
\Lambda\partial_{x_{i}}(b_{i}\omega-u_{i}j))(\xi,\tau)\,d\tau\nonumber\\&+
\int_{0}^{t}e^{\frac{-(t-\tau)|\xi|^{2}}{g(\xi)}}\mathcal{F}(\Lambda T(\nabla u, \nabla b))(\xi,\tau)\,d\tau\nonumber\\
=&e^{\frac{-t|\xi|^{2}}{g(\xi)}}|\xi|\widehat{j_{0}}(\xi)+\int_{0}^{t}
|\xi|^{2-\delta}e^{\frac{-(t-\tau)|\xi|^{2}}{g(\xi)}}|\xi|^{\delta-2}\mathcal{F}(
\Lambda\partial_{x_{i}}(b_{i}\omega-u_{i}j))(\xi,\tau)\,d\tau\nonumber\\&+
\int_{0}^{t}|\xi|^{1-\delta}e^{\frac{-(t-\tau)|\xi|^{2}}{g(\xi)}}|\xi|^{\delta-1}
\mathcal{F}(\Lambda T(\nabla u, \nabla b))(\xi,\tau)\,d\tau
\nonumber\\
=&|\xi|^{2-\delta}e^{\frac{-t|\xi|^{2}}{g(\xi)}}\widehat{\Lambda^{\delta-1}j_{0}}(\xi)
+\int_{0}^{t}
|\xi|^{2-\delta}e^{\frac{-(t-\tau)|\xi|^{2}}{g(\xi)}}\mathcal{F}(
\Lambda^{\delta-1}\partial_{x_{i}}(b_{i}\omega-u_{i}j))(\xi,\tau)\,d\tau\nonumber\\&+
\int_{0}^{t}|\xi|^{1-\delta}e^{\frac{-(t-\tau)|\xi|^{2}}{g(\xi)}}
\mathcal{F}(\Lambda^{\delta} T(\nabla u, \nabla b))(\xi,\tau)\,d\tau,
\end{align}
where the constant $\delta>0$ will be fixed later.
We thus derive from \eqref{tffmpwa8} that
\begin{align}
\widehat{(\Lambda j)}(\xi,t) =&|\xi|^{2-\delta}e^{\frac{-t|\xi|^{2}}{g(\xi)}}\widehat{\Lambda^{\delta-1}j_{0}}(\xi)
+\int_{0}^{t}
|\xi|^{2-\delta}e^{\frac{-(t-\tau)|\xi|^{2}}{g(\xi)}}\mathcal{F}(
\Lambda^{\delta-1}\partial_{x_{i}}(b_{i}\omega-u_{i}j))(\xi,\tau)\,d\tau\nonumber\\&+
\int_{0}^{t}|\xi|^{1-\delta}e^{\frac{-(t-\tau)|\xi|^{2}}{g(\xi)}}
\mathcal{F}(\Lambda^{\delta} T(\nabla u, \nabla b))(\xi,\tau)\,d\tau.\nonumber
\end{align}
Applying inverse Fourier transform to above equality shows
\begin{align}\label{tmkgfd68}
\Lambda j(t) =&\Lambda^{2-\delta} K(t)\ast \Lambda^{\delta-1}j_{0}
+\int_{0}^{t}\Lambda^{2-\delta} K(t-\tau)\ast(
\Lambda^{\delta-1}\partial_{x_{i}}(b_{i}\omega-u_{i}j))(\tau)\,d\tau\nonumber\\&+
\int_{0}^{t}\Lambda^{1-\delta} K(t-\tau)\ast
(\Lambda^{\delta} T(\nabla u, \nabla b))(\tau)\,d\tau.
\end{align}
Taking $L^{q}$-norm (for any $q\in(2,\infty)$) to the both sides of the above equality in terms of space variable and using the Young inequality, it follows that
\begin{align}
\|\nabla j(t)\|_{L^{q}}&\leq C\|\Lambda j(t)\|_{L^{q}}\nonumber\\&\leq
 C\|\Lambda^{2-\delta} K(t)\ast \Lambda^{\delta-1}j_{0}\|_{L^{q}}\nonumber\\&\quad
 +C\int_{0}^{t}{\|\Lambda^{2-\delta} K(t-\tau)\ast \{ \Lambda^{\delta-1}\partial_{x_{i}}(b_{i}\omega-u_{i}j))\}(\tau)\|_{L^{q}}\,d\tau}
 \nonumber\\&\quad+C
\int_{0}^{t}{\|\Lambda^{1-\delta} K(t-\tau)\ast  \Lambda^{\delta}T(\nabla u, \nabla b)(\tau)\|_{L^{q}}\,d\tau}
\nonumber\\&\leq C\|\Lambda^{2-{\delta}}K(t)\|_{L^{1}}\| \Lambda^{\delta-1}j_{0}\|_{L^{q}}
\nonumber\\&\quad+C\int_{0}^{t}{\|\Lambda^{2-\delta} K(t-\tau)\|_{L^{1}}(\|\Lambda^{\delta}(b \omega)(\tau)\|_{L^{q}}+\|\Lambda^{\delta}
(uj)(\tau)\|_{L^{q}})\,d\tau}
\nonumber\\&\quad
+C\int_{0}^{t}{\|\Lambda^{1-\delta}K(t-\tau)\|_{L^{2}} \|\Lambda^{\delta}T(\nabla u, \nabla b)(\tau)\|_{L^{\frac{2q}{q+2}}}\,d\tau}
\nonumber\\&\leq C\|\Lambda^{2-\delta}K(t)\|_{L^{1}}\| \Lambda^{\delta}b_{0}\|_{L^{q}}
\nonumber\\&\quad+C\int_{0}^{t}{\|\Lambda^{2-\delta} K(t-\tau)\|_{L^{1}}(\|\Lambda^{\delta}(b\otimes \omega)(\tau)\|_{L^{q}}+\|\Lambda^{\delta}
(u\otimes j)(\tau)\|_{L^{q}})\,d\tau}
\nonumber\\&\quad
+C\int_{0}^{t}{\|\Lambda^{1-\delta}K(t-\tau)\|_{L^{2}} \|\Lambda^{\delta}T(\nabla u, \nabla b)(\tau)\|_{L^{\frac{2q}{q+2}}}\,d\tau}.\nonumber
\end{align}
Taking $L^{1}$-norm in terms of time variable and appealing to the convolution Young inequality, one concludes by using \eqref{bnmper8} that
\begin{align}\label{t312}
\|\nabla j(t)\|_{L_{t}^{1}L^{q}} \leq&C\|\Lambda^{2-\delta} K(t)\|_{L_{t}^{1}L^{1}} +C\|\Lambda^{1-\delta} K(t)\|_{L_{t}^{1}L^{2}}\|\Lambda^{\delta}T(\nabla u, \nabla b)\|_{L_{t}^{1}L^{\frac{2q}{q+2}}}\nonumber\\ &+C\|\Lambda^{2-\delta} K(t)\|_{L_{t}^{1}L^{1}}(\|\Lambda^{\delta}(b\omega)\|_{L_{t}^{1}L^{q}}
+\|\Lambda^{\delta}(uj)\|_{L_{t}^{1}L^{q}})\nonumber\\ \leq&C +C\|\Lambda^{\delta}T(\nabla u, \nabla b)\|_{L_{t}^{1}L^{\frac{2q}{q+2}}}+C(\|\Lambda^{\delta}(b\omega)\|_{L_{t}^{1}L^{q}}
+\|\Lambda^{\delta}(uj)\|_{L_{t}^{1}L^{q}}).
\end{align}
Taking $\delta<\alpha$ and using \eqref{t303}, we may derive
\begin{align}\label{bnhtdf01}
C\|\Lambda^{\delta}T(\nabla u, \nabla b)\|_{L_{t}^{1}L^{\frac{2q}{q+2}}}&\leq
C\|\Lambda^{\delta}\nabla u\|_{L_{t}^{2}L^{2}}\| \nabla b\|_{L_{t}^{2}L^{q}}
+C\|\Lambda^{\delta}\nabla b\|_{L_{t}^{\infty}L^{2}}\|\nabla u\|_{L_{t}^{1}L^{q}}
\nonumber\\
&\leq
C\|\Lambda^{\delta}\omega\|_{L_{t}^{2}L^{2}}\|j\|_{L_{t}^{2}L^{q}}
+C\|\Lambda^{\delta}j\|_{L_{t}^{\infty}L^{2}}\|\omega\|_{L_{t}^{1}L^{q}}
\nonumber\\
&\leq
C+C\|\omega\|_{L_{t}^{1}L^{q}}.
\end{align}
Similarly, one has by taking $\delta<\min\{\alpha,\frac{1}{q}\}$ and using \eqref{t303} that
\begin{align}\label{bnhtdf02}
C(\|\Lambda^{\delta}(b\omega)\|_{L_{t}^{1}L^{q}}
+\|\Lambda^{\delta}(uj)\|_{L_{t}^{1}L^{q}})\leq&
C(\|b\|_{L_{t}^{\infty}L^{\infty}}\|\Lambda^{\delta}\omega\|_{L_{t}^{1}L^{q}}+
\|\Lambda^{\delta}b\|_{L_{t}^{\infty}L^{\frac{2}{\delta}}}
\|\omega\|_{L_{t}^{1}L^{\frac{2q}{2-\delta q}}}
\nonumber\\&+\|\Lambda^{\delta}u\|_{L_{t}^{2}L^{2q}}
\|j\|_{L_{t}^{2}L^{2q}}+\|u\|_{L_{t}^{2}L^{2q}}
\|\Lambda^{\delta}j\|_{L_{t}^{2}L^{2q}})\nonumber\\
\leq&
C(\|b\|_{L_{t}^{\infty}L^{\infty}}\|\Lambda^{\delta}\omega\|_{L_{t}^{1}L^{q}}+
\|j\|_{L_{t}^{\infty}L^{2}}
\|\Lambda^{\delta}\omega\|_{L_{t}^{1}L^{q}})
\nonumber\\&+C(\|u\|_{L_{t}^{2}L^{2}}+\|\omega\|_{L_{t}^{2}L^{2}})
(\|j\|_{L_{t}^{2}L^{2}}+\|\mathcal{L}^{\frac{1}{2}}j\|_{L_{t}^{2}L^{2}})\nonumber\\
\leq&
C+C\|\Lambda^{\delta}\omega\|_{L_{t}^{1}L^{q}}.
\end{align}
Putting \eqref{bnhtdf01}  and \eqref{bnhtdf02} into \eqref{t312} shows
\begin{align}\label{bnhtdf03}
\|\nabla j(t)\|_{L_{t}^{1}L^{q}} \leq&C+C\|\omega\|_{L_{t}^{1}L^{q}}+
C\|\Lambda^{\delta}\omega\|_{L_{t}^{1}L^{q}}.
\end{align}
To bound the terms at the righthand side of \eqref{bnhtdf03}, we deduce from
$\eqref{VMHD}_{1}$ that
\begin{align}\label{bnhtdf04}
\frac{1}{q}\frac{d}{dt}\|\omega(t)\|_{L^{q}}^{q} +\int_{\mathbb{R}^{2}}\Lambda^{2\alpha}\omega(|\omega|^{q-2}\omega)\,dx \leq&C\|b\|_{L^{\infty}}\|\nabla j\|_{L^{q}}\|\omega\|_{L^{q}}^{q-1} \nonumber\\
\leq&C \|\nabla j\|_{L^{q}}\|\omega\|_{L^{q}}^{q-1}.
\end{align}
Thanks to \eqref{dfoklp88}, one obtains
\begin{align}\label{bnhtdf05}
\int_{\mathbb{R}^{2}}\Lambda^{2\alpha}\omega(|\omega|^{q-2}\omega)\,dx\geq C(\alpha,q)\|\omega\|_{\dot{B}_{q,q}^{\frac{2\alpha}{q}}}^{q}.
\end{align}
Putting \eqref{bnhtdf05} into \eqref{bnhtdf04} leads to
\begin{align}\label{bnhtdf06}
\frac{1}{q}\frac{d}{dt}\|\omega(t)\|_{L^{q}}^{q} +C\|\omega\|_{\dot{B}_{q,q}^{\frac{2\alpha}{q}}}^{q} \leq&C \|\nabla j\|_{L^{q}}\|\omega\|_{L^{q}}^{q-1}.
\end{align}
Obviously, it follows from \eqref{bnhtdf06} that
\begin{align}
 \frac{d}{dt}\|\omega(t)\|_{L^{q}}  \leq&C \|\nabla j\|_{L^{q}},\nonumber
\end{align}
which implies
\begin{align}\label{bnhtdf07}
\|\omega(t)\|_{L^{q}}  \leq& \|\omega_{0}\|_{L^{q}} +C\|\nabla j\|_{L_{t}^{1}L^{q}}.
\end{align}
Keeping in mind \eqref{bnhtdf06} and making use of \eqref{bnhtdf07}, we are able to show
\begin{align}
\int_{0}^{t}\|\omega(\tau)\|_{\dot{B}_{q,q}^{\frac{2\alpha}{q}}}^{q}\,d\tau
&\leq C+C\int_{0}^{t}\|\nabla j(\tau)\|_{L^{q}}\|\omega(\tau)\|_{L^{q}}^{q-1}\,d\tau\nonumber\\
&\leq C+C\int_{0}^{t}\|\nabla j(\tau)\|_{L^{q}}\left(1+\|\nabla j\|_{L_{\tau}^{1}L^{q}}\right)^{q-1}\,d\tau\nonumber\\
&\leq C+C\left(1+\|\nabla j\|_{L_{t}^{1}L^{q}}\right)^{q-1}\int_{0}^{t}\|\nabla j(\tau)\|_{L^{q}}\,d\tau
\nonumber\\
&\leq C+C\|\nabla j\|_{L_{t}^{1}L^{q}}^{q},\nonumber
\end{align}
which gives
\begin{align}\label{bnhtdf08}
\|\omega(\tau)\|_{L_{t}^{q}\dot{B}_{q,q}^{\frac{2\alpha}{q}}} \leq C+C\|\nabla j\|_{L_{t}^{1}L^{q}}.
\end{align}
Coming back to \eqref{bnhtdf03} and using \eqref{bnhtdf08}, we derive by further restricting $\delta<\frac{2\alpha}{q}$ that
\begin{align}
\|\nabla j(t)\|_{L_{t}^{1}L^{q}} \leq&C+C\|\omega\|_{L_{t}^{1}L^{q}}+
C\|\Lambda^{\delta}\omega\|_{L_{t}^{1}L^{q}}\nonumber\\
\leq&C+C\|\omega\|_{L_{t}^{1}L^{q}}+
C\int_{0}^{t}\|\omega(\tau)\|_{L^{2}}^{1-\frac{(1+\delta)q-2}{q-2(1-\alpha)}}
\|\omega(\tau)\|_{\dot{B}_{q,q}^{\frac{2\alpha}{q}}}
^{\frac{(1+\delta)q-2}{q-2(1-\alpha)}}\,d\tau
\nonumber\\
\leq&C+C\|\omega\|_{L_{t}^{1}L^{q}}+
C\left(\int_{0}^{t}\|\omega(\tau)\|_{L^{2}}^{\frac{(2\alpha-\delta q)q}{q[q-2(1-\alpha)]-(1+\delta)q+2}}
\,d\tau\right)^{\frac{q[q-2(1-\alpha)]-(1+\delta)q+2}{q[q-2(1-\alpha)]}}
\nonumber\\&\times\left(\int_{0}^{t}\|\omega(\tau)\|_{\dot{B}_{q,q}^{\frac{2\alpha}{q}}}
^{q}\,d\tau\right)^{\frac{(1+\delta)q-2}{q[q-2(1-\alpha)]}}
\nonumber\\
\leq&C+C\|\omega\|_{L_{t}^{1}L^{q}}+
C\|\omega(\tau)\|_{L_{t}^{q}\dot{B}_{q,q}^{\frac{2\alpha}{q}}}
^{\frac{(1+\delta)q-2}{q-2(1-\alpha)}}
\nonumber\\
\leq&C+C\|\omega\|_{L_{t}^{1}L^{q}}+
C\|\nabla j\|_{L_{t}^{1}L^{q}}
^{\frac{(1+\delta)q-2}{q-2(1-\alpha)}}
\nonumber\\
\leq&
\frac{1}{2}\|\nabla j(t)\|_{L_{t}^{1}L^{q}}+C+C\|\omega\|_{L_{t}^{1}L^{q}},\nonumber
\end{align}
which yields
\begin{align}\label{bnhtdf09}
\|\nabla j(t)\|_{L_{t}^{1}L^{q}} \leq&C+C\|\omega\|_{L_{t}^{1}L^{q}}.
\end{align}
Denoting
$$A(t)\triangleq \|\nabla j(t)\|_{L_{t}^{1}L^{q}},$$
we deduce from \eqref{bnhtdf07} and \eqref{bnhtdf09} that
$$A(t)\leq C+C\int_{0}^{t}A(\tau)\,d\tau.$$
Applying a Gronwall-type argument to the above inequality, we obtain
$$A(t)\leq C(t,u_{0},b_{0}),$$
which further implies for any $q\in (2,\infty)$ that
$$\|\omega(t)\|_{L^{q}}+\int_{0}^{t}\|\nabla j(\tau)\|_{L^{q}}\,d\tau\leq C(t,u_{0},b_{0}).$$
This achieves the proof of Lemma \ref{L35}.
\end{proof}

\vskip .1in
Finally, we are in a position to prove Theorem \ref{Th1}.
\begin{proof}[{Proof of Theorem \ref{Th1}}]
With a priori estimates achieved above, it is a standard procedure to complete
the proof of Theorem \ref{Th1}. The first step is to construct a local-in-time solution which can be achieved by quite standard arguments and is thus
omitted here. The last step is to show the global $H^{s}$-bound.
To this end, we apply $\Lambda^{s}$ to the equations $u$ and $b$, and take the $L^2$ inner product of the resulting equations with $(\Lambda^{s}u,\,\Lambda^{s}b)$ to obtain
\begin{align}
& \frac{1}{2}\frac{d}{dt}(\|\Lambda^{s}u(t)\|_{L^{2}}^{2}+\|\Lambda^{s}b(t)\|_{L^{2}}^{2})
+\|\Lambda^{s+\alpha}u\|_{L^{2}}^{2}
+\|\mathcal{L}^{\frac{1}{2}}\Lambda^{s}b\|_{L^{2}}^{2} \nonumber\\
&=  -\int_{\mathbb{R}^{2}}{[\Lambda^{s}, u\cdot\nabla]u\cdot
\Lambda^{s}u\,dx}+\int_{\mathbb{R}^{2}}{[\Lambda^{s},
b\cdot\nabla]b\cdot
\Lambda^{s}u\,dx}\nonumber\\
& \quad- \int_{\mathbb{R}^{2}}{[\Lambda^{s}, u\cdot\nabla]b\cdot
\Lambda^{s}b\,dx}+\int_{\mathbb{R}^{2}}{[\Lambda^{s},
b\cdot\nabla]u\cdot
\Lambda^{s}b\,dx}\nonumber\\
&=  J_{1}+J_{2}+J_{3}+J_{4},\nonumber
\end{align}
where $[a,\,b]$ is the standard commutator notation, namely $[a,\,b]=ab-ba$.
To handle the four terms, we need the following Kato-Ponce inequality (see \cite{kaPonce})
$$\|[\Lambda^{s}, f]g\|_{L^{p}}\leq C(\|\nabla f\|_{L^{p_{1}}}\|\Lambda^{s-1}g\|_{L^{p_{2}}}+\|g\|_{L^{p_{3}}
}\|\Lambda^{s}f\|_{L^{p_{4}}}),$$
where $1<p_{1},\,p_{3}\leq\infty$ and $1<p_{2},\,p_{4}<\infty$ satisfy $\frac{1}{p}=\frac{1}{p_{1}}+\frac{1}{p_{2}}=\frac{1}{p_{3}}+\frac{1}{p_{4}}$.
As a result, one can deduce that
\begin{align}
J_{1}&\leq C\|\nabla u\|_{L^{q}}\|\Lambda^{s}u\|_{L^{\frac{2q}{q-1}}}^{2}
\nonumber\\
&\leq C\|\nabla u\|_{L^{q}}\|\Lambda^{s}u\|_{L^{2}}^{2-\frac{2}{\alpha q}}\|\Lambda^{s+\alpha}u\|_{L^{2}}^{ \frac{2}{\alpha q}}
\nonumber\\
&\leq
\frac{1}{2}\|\Lambda^{s+\alpha}u\|_{L^{2}}^{2}+C\|\omega\|_{L^{q}}^{\frac{\alpha q}{\alpha q-1}}\|\Lambda^{s}u\|_{L^{2}}^{2},\nonumber
\end{align}
$$J_{2}\leq C\|\nabla b\|_{L^{\infty}}(\|\Lambda^{s}u\|_{L^{2}}^{2}+\|\Lambda^{s}b\|_{L^{2}}^{2}),$$
\begin{align}
J_{3} &\leq C\|[\Lambda^{s}, u\cdot\nabla]b\|_{L^{2}}
\|\Lambda^{s}b\|_{L^{2}}\nonumber\\
&\leq C(\|\nabla u\|_{L^{q}}\|\Lambda^{s}b\|_{L^{\frac{2q}{q-2}}}+\|\nabla b\|_{L^{\infty}}\|\Lambda^{s}u\|_{L^{2}})
\|\Lambda^{s}b\|_{L^{2}}
\nonumber\\
&\leq C(\|\nabla u\|_{L^{q}}\|\Lambda^{s}b\|_{L^{2}}^{1-\eta}
\|\mathcal{L}^{\frac{1}{2}}b\|_{H^{s}}^{\eta}+\|\nabla b\|_{L^{\infty}}\|\Lambda^{s}u\|_{L^{2}})
\|\Lambda^{s}b\|_{L^{2}}
\nonumber\\
&\leq
\frac{1}{2}\|\mathcal{L}^{\frac{1}{2}}\Lambda^{s}b\|_{L^{2}}^{2}+
C\left(\|\nabla b\|_{L^{\infty}}+\|\omega\|_{L^{q}}^{\frac{2}{1-\eta}}\right)
(\|\Lambda^{s}u\|_{L^{2}}^{2}+\|\Lambda^{s}b\|_{L^{2}}^{2}),\nonumber
\end{align}
\begin{align}
J_{4}&\leq C\|[\Lambda^{s}, b\cdot\nabla]u\|_{L^{2}}
\|\Lambda^{s}b\|_{L^{2}}\nonumber\\
&\leq C(\|\nabla b\|_{L^{\infty}}\|\Lambda^{s}u\|_{L^{2}}+\|\nabla u\|_{L^{q}}\|\Lambda^{s}b\|_{L^{\frac{2q}{q-2}}})
\|\Lambda^{s}b\|_{L^{2}}
\nonumber\\
&\leq
\frac{1}{2}\|\mathcal{L}^{\frac{1}{2}}\Lambda^{s}b\|_{L^{2}}^{2}+
C\left(\|\nabla b\|_{L^{\infty}}+\|\omega\|_{L^{q}}^{\frac{2}{1-\eta}}\right)
(\|\Lambda^{s}u\|_{L^{2}}^{2}+\|\Lambda^{s}b\|_{L^{2}}^{2}).\nonumber
\end{align}
Consequently, we get
\begin{align}&\label{vsjhf95}
 \frac{d}{dt}(\|\Lambda^{s}u(t)\|_{L^{2}}^{2}+\|\Lambda^{s}b(t)\|_{L^{2}}^{2})
 +\|\Lambda^{s+\alpha}u\|_{L^{2}}^{2}
+\|\mathcal{L}^{\frac{1}{2}}\Lambda^{s}b\|_{L^{2}}^{2}
\nonumber\\&\leq C(\|\omega\|_{L^{q}}^{\frac{\alpha q}{\alpha q-1}}+\|\omega\|_{L^{q}}^{\frac{2}{1-\eta}}+\|\nabla b\|_{L^{\infty}})(\|\Lambda^{s}u\|_{L^{2}}^{2}+\|\Lambda^{s}b\|_{L^{2}}^{2}).
\end{align}
Applying the Gronwall type inequality to \eqref{vsjhf95} and using the estimates \eqref{t311} as well as \eqref{tffgklq1}, we eventually obtain
$$\|\Lambda^{s}u(t)\|_{L^{2}}+\|\Lambda^{s}b(t)\|_{L^{2}}+\int_{0}^{t}
(\|\Lambda^{s+\alpha}u(\tau)\|_{L^{2}}^{2}
+\|\mathcal{L}^{\frac{1}{2}}\Lambda^{s}b(\tau)\|_{L^{2}}^{2})\,d\tau\leq C(t,u_{0},b_{0}),$$
which along with \eqref{t301} yields the desired global $H^{s}$-bound. Thanks to the global $H^{s}$-bound with $s\geq2$, we thus get that both $u$ and $b$ are in $L_{loc}^{1}(\mathbb{R}_{+}; \rm {Lip(\mathbb{R}^{2})})$, which imply the uniqueness immediately, and thus omit the details. Consequently, we complete the proof of Theorem \ref{Th1}.
\end{proof}

\vskip .2in
\section{ The proof of Theorem \ref{Th2}}\setcounter{equation}{0}\label{addedxq}
This section mainly focuses on the proof of Theorem \ref{Th2}. Precisely, we are going to derive some global \emph{a prior} bounds for the case $\alpha=0$, which of course improve the one of \eqref{t303}. We believe these bounds will play important roles in the eventual solution of the global regularity problem for the 2D resistive MHD equations. More precisely, the global \emph{a prior} bounds of Theorem \ref{Th2} can be stated one by one as follows.
\begin{lemma}  \label{vsadlea11}
Let $(u_{0},b_{0})$ satisfy the conditions stated in Theorem \ref{Th2}, then it holds for any $r\in[0,1)$
\begin{eqnarray}\label{vsadfg11}
\|\Lambda^{r}j(t)\|_{L^{2}} \leq C(t,u_{0},b_{0}).
\end{eqnarray}
In particular, \eqref{vsadfg11} implies
\begin{eqnarray}\label{68vsadfgg}
\|b(t)\|_{L^{\infty}} \leq C(t,u_{0},b_{0}).
\end{eqnarray}
Moreover, there also holds
\begin{eqnarray}\label{adfkh1}
\|\mathcal{L}b(t)\|_{L^{2}} \leq C(t,u_{0},b_{0}).
\end{eqnarray}
\end{lemma}

\begin{proof}
Similar to \eqref{tmkgfd68}, we may derive
\begin{align}
\Lambda^{r} j(t) =&\Lambda^{r} K(t)\ast  j_{0}
+\int_{0}^{t}\Lambda^{r+1+\epsilon} K(t-\tau)\ast
\Lambda^{-\epsilon-1}\partial_{x_{i}}\big(b_{i}\omega-u_{i}j\big)(\tau)\,d\tau \nonumber\\&+
\int_{0}^{t}\Lambda^{r} K(t-\tau)\ast
T(\nabla u, \nabla b)(\tau)\,d\tau,\nonumber
\end{align}
which yields
\begin{align}\label{vsadfg12}
\|\Lambda^{r} j(t)\|_{L^{2}} \leq&\|\Lambda^{r} K(t)\ast  j_{0}\|_{L^{2}}
+\int_{0}^{t}\|\Lambda^{r+1+\epsilon} K(t-\tau)\ast
\Lambda^{-\epsilon-1}\partial_{x_{i}}\big(b_{i}\omega-u_{i}j\big)(\tau)\|_{L^{2}}\,d\tau \nonumber\\&+
\int_{0}^{t}\|\Lambda^{r} K(t-\tau)\ast
T(\nabla u, \nabla b)(\tau)\|_{L^{2}}\,d\tau.
\end{align}
Thanks to the Plancherel Theorem, one has
\begin{align}\label{ydvba11}
\|\Lambda^{r} K(t)\ast  j_{0}\|_{L^{2}}=& \left\|\widehat{\Lambda^{r} K(t)} \widehat{j_{0}}\right\|_{L^{2}}\nonumber\\=& \left\||\xi|^{r} e^{\frac{-t|\xi|^{2}}{g(\xi)}} \widehat{j_{0}}\right\|_{L^{2}}
\nonumber\\ \leq & \left\||\xi|^{r} \widehat{j_{0}}\right\|_{L^{2}}
\nonumber\\=& \left\|\Lambda^{r}j_{0}\right\|_{L^{2}}.
\end{align}
Due to $r<1$, we take $\epsilon\in (0,1-r)$, then it follows from \eqref{bnmper8} and the convolution inequality in space variables that
\begin{align}
&\int_{0}^{t}\|\Lambda^{r+1+\epsilon} K(t-\tau)\ast
\Lambda^{-\epsilon-1}\partial_{x_{i}}\big(b_{i}\omega-u_{i}j\big)(\tau)\|_{L^{2}}\,d\tau
\nonumber\\ &\leq  C
\int_{0}^{t}\|\Lambda^{r+1+\epsilon} K(t-\tau)\|_{L^{1}}
\|\Lambda^{-\epsilon-1}\partial_{x_{i}}\big(b_{i}\omega-u_{i}j\big)(\tau)\|_{L^{2}}\,d\tau
\nonumber\\ &\leq  C
\int_{0}^{t}\|\Lambda^{r+1+\epsilon} K(t-\tau)\|_{L^{1}}
\|\Lambda^{-\epsilon}\big(b_{i}\omega-u_{i}j\big)(\tau)\|_{L^{2}}\,d\tau
\nonumber\\ &\leq  C
\int_{0}^{t}\|\Lambda^{r+1+\epsilon} K(t-\tau)\|_{L^{1}}
(\|b\omega(\tau)\|_{L^{\frac{2}{1+\epsilon}}}+\|u j(\tau)\|_{L^{\frac{2}{1+\epsilon}}})\,d\tau
\nonumber\\ &\leq  C
\int_{0}^{t}\|\Lambda^{r+1+\epsilon} K(t-\tau)\|_{L^{1}}
(\|b\|_{L^{\frac{2}{\epsilon}}}\|\omega\|_{L^{2}}+
\|u\|_{L^{\frac{2}{\epsilon}}}\|j\|_{L^{2}})(\tau)\,d\tau
\nonumber\\ &\leq  C
\int_{0}^{t}\|\Lambda^{r+1+\epsilon} K(t-\tau)\|_{L^{1}}
(\|b\|_{L^{2}}^{\epsilon}\|j\|_{L^{2}}^{1-\epsilon}\|\omega\|_{L^{2}}+
\|u\|_{L^{2}}^{\epsilon}\|\omega\|_{L^{2}}^{1-\epsilon}
\|j\|_{L^{2}})(\tau)\,d\tau
\nonumber\\ &\leq  C(t)
\int_{0}^{t}\|\Lambda^{r+1+\epsilon} K(\tau)\|_{L^{1}}\,d\tau
\nonumber\\ &\leq  C(t),\nonumber
\end{align}
where we have used \eqref{t301} and \eqref{adt303}.
Using \eqref{bnmper8} and the convolution inequality in space variables again, we have
\begin{align}
\int_{0}^{t}\|\Lambda^{r} K(t-\tau)\ast
T(\nabla u, \nabla b)(\tau)\|_{L^{2}}\,d\tau &\leq  C\int_{0}^{t}\|\Lambda^{r} K(t-\tau)\|_{L^{2}}
\|T(\nabla u, \nabla b)(\tau)\|_{L^{1}}\,d\tau\nonumber\\&\leq  C\int_{0}^{t}\|\Lambda^{r} K(t-\tau)\|_{L^{2}}
\|\nabla u(\tau)\|_{L^{2}}\|\nabla b(\tau)\|_{L^{2}}\,d\tau
\nonumber\\&\leq  C\int_{0}^{t}\|\Lambda^{r} K(t-\tau)\|_{L^{2}}
\|\omega(\tau)\|_{L^{2}}\|j(\tau)\|_{L^{2}}\,d\tau
\nonumber\\&\leq  C(t)\int_{0}^{t}\|\Lambda^{r} K(\tau)\|_{L^{2}}\,d\tau
\nonumber\\&\leq  C(t).\nonumber
\end{align}
Inserting the above estimates into \eqref{vsadfg12} gives
\begin{align}
\|\Lambda^{r} j(t)\|_{L^{2}}\leq  C(t),\nonumber
\end{align}
which is \eqref{vsadfg11}. The proof of \eqref{adfkh1} is much involved and the argument in dealing with  \eqref{vsadfg11} does not work. To this end, we first show for any $q\in[2,\infty)$
\begin{eqnarray}\label{adyhye004}
\|\mathcal{L}b(t)\|_{L_{t}^{q}L^{2}} \leq C(t,u_{0},b_{0}).
\end{eqnarray}
The equation $\eqref{ZEGMHD}_{2}$ is rewritten as
\begin{eqnarray}\label{adyhye005}
\partial_{t}b+\mathcal{L}b=(b\cdot\nabla)u-(u\cdot\nabla)b\triangleq f.
\end{eqnarray}
Obviously, we get
\begin{align}
\|f\|_{L^{2}}&\leq
\|(b\cdot\nabla)u\|_{L^{2}}+\|(u\cdot\nabla)b\|_{L^{2}}
\nonumber\\&\leq
C(\|b\|_{L^{\infty}}\|\nabla u\|_{L^{2}}+\|u\|_{L^{4}}\|\nabla b\|_{L^{4}}),\nonumber
\end{align}
which along with \eqref{t301}, \eqref{adt303}, \eqref{vsadfg11} and \eqref{68vsadfgg} yield
\begin{eqnarray}\label{adyhye007}
\|f(t)\|_{L_{t}^{\infty}L^{2}} \leq C(t,u_{0},b_{0}).
\end{eqnarray}
Taking the $L^2$ product of both sides of \eqref{adyhye005} with $\mathcal{L}b$, we are able to show
\begin{align}
\frac{1}{2}\frac{d}{dt}\|\mathcal{L}^{\frac{1}{2}}b(t)\|_{L^{2}}^{2}
+\|\mathcal{L}b\|_{L^{2}}^{2}&=\int_{\mathbb{R}^{2}}{
f
\cdot \mathcal{L} b\,dx}\nonumber\\&\leq
\|f\|_{L^{2}}\|\mathcal{L} b\|_{L^{2}}
\nonumber\\&\leq \frac{1}{2}\|\mathcal{L}b\|_{L^{2}}^{2}+
C\|f\|_{L^{2}}^{2},\nonumber
\end{align}
which yields
$$\frac{d}{dt}\|\mathcal{L}^{\frac{1}{2}}b(t)\|_{L^{2}}^{2}
+\|\mathcal{L}b\|_{L^{2}}^{2}\leq
C\|f\|_{L^{2}}^{2}.$$
Performing a time integration and using \eqref{adyhye007}, we have
\begin{eqnarray} \label{adyhye008}
\|\mathcal{L}b(t)\|_{L_{t}^{2}L^{2}} \leq C(t,u_{0},b_{0}).
\end{eqnarray}
It follows from \eqref{adyhye005}, \eqref{adyhye007} and \eqref{adyhye008} that
\begin{eqnarray} \label{adyhye009}
\|\partial_{t}b\|_{L_{t}^{2}L^{2}} \leq C(t,u_{0},b_{0}).
\end{eqnarray}
Noticing that after checking the proof of \eqref{vsadfg11}, it seems impossible to derive \eqref{adfkh1} via only the equation $\eqref{ZEGMHD}_{2}$ or the equation $\eqref{VMHD}_{2}$. To bypass this difficulty, we appeal to explore a combined quantity $\mathcal{L}b-(b\cdot\nabla)u$ which obeys the equation
\begin{align} \label{adyhye012}
\partial_{t}\{\mathcal{L}b-
(b\cdot\nabla)u\}+\mathcal{L}\{\mathcal{L}b-
(b\cdot\nabla)u\}=&b\cdot\nabla\{(u\cdot\nabla)u\}+b\cdot\nabla(\nabla p)-\mathcal{L}\{(u\cdot\nabla)b\}\nonumber\\&-b\cdot\nabla\{(b\cdot\nabla)b\}
-\partial_{t}b\cdot\nabla u.
\end{align}
The derivation of \eqref{adyhye012} can be performed by combining $\eqref{ZEGMHD}_{1}$ and $\eqref{ZEGMHD}_{2}$.
More precisely, we apply $b\cdot\nabla$ to $\eqref{ZEGMHD}_{1}$ to conclude
\begin{align} \label{adyhye010}
\partial_{t}(b\cdot\nabla u)+b\cdot\nabla\{(u\cdot\nabla)u\}+b\cdot\nabla(\nabla p)=b\cdot\nabla\{(b\cdot\nabla)b\}+\partial_{t}b\cdot\nabla u.
\end{align}
Applying $\mathcal{L}$ to $\eqref{ZEGMHD}_{2}$, we thus infer
\begin{align} \label{adyhye011}
\partial_{t}\mathcal{L}b+\mathcal{L}\{(u\cdot\nabla)b\}+\mathcal{L}\{\mathcal{L}b-
(b\cdot\nabla)u\}=0.
\end{align}
As a result, \eqref{adyhye012} is an easy consequence of \eqref{adyhye011} minus \eqref{adyhye010}. We note that applying the direct $L^2$-energy method to \eqref{adyhye012} to derive \eqref{adfkh1} is not workable due to the strong nonlinearity at the right-hand side of \eqref{adyhye012}. To overcome this difficulty, we formulate \eqref{adyhye012} as the following integral form
\begin{align}
\{\mathcal{L}b-
(b\cdot\nabla)u\}(t)=K(t)\ast \{\mathcal{L}b_{0}-
(b_{0}\cdot\nabla)u_{0}\}+\sum_{k=1}^{5}N_{k},\nonumber
\end{align}
where the five terms are given by
$$\quad \quad \ \ N_{1}=\int_{0}^{t}{K(t-\tau)\ast \Big(b\cdot\nabla\{(u\cdot\nabla)u\}\Big)(\tau)\,d\tau},$$
$$ \, N_{2}=\int_{0}^{t}{K(t-\tau)\ast \Big(b\cdot\nabla(\nabla p)\Big)(\tau)\,d\tau},$$
$$ \quad \ \ \ N_{3}=-\int_{0}^{t}{K(t-\tau)\ast \Big(\mathcal{L}\{(u\cdot\nabla)b\}\Big)(\tau)\,d\tau},$$
$$\qquad  \ \ \ \, N_{4}=-\int_{0}^{t}{K(t-\tau)\ast \Big(b\cdot\nabla\{(b\cdot\nabla)b\}\Big)(\tau)\,d\tau}, $$
$$\ N_{5}=-\int_{0}^{t}{K(t-\tau)\ast \Big(\partial_{t}b\cdot\nabla u\Big)(\tau)\,d\tau}.$$
By virtue of the Plancherel Theorem, we notice
\begin{align}
 \|K(t)\ast\{\mathcal{L}b_{0}-
(b_{0}\cdot\nabla)u_{0}\}\|_{L^2}
&\leq \|\widehat{K}(t) \mathcal{F}\{\mathcal{L}b_{0}-
(b_{0}\cdot\nabla)u_{0}\}\|_{L^2} \nonumber\\
&\leq \| \mathcal{F}\{\mathcal{L}b_{0}-
(b_{0}\cdot\nabla)u_{0}\}\|_{L^2} \nonumber\\
&\leq C\|\mathcal{L}b_{0}-
(b_{0}\cdot\nabla)u_{0}\|_{L^2} \nonumber\\
&\leq C, \nonumber
\end{align}
which shows
\begin{align}\label{adyhye013}
\|\{\mathcal{L}b-
(b\cdot\nabla)u\}(t)\|_{L^2}&\leq  C +\sum_{k=1}^{5}\|N_{k}\|_{L^2}.
\end{align}
Next we will claim that
\begin{align}\label{adyhye014}
\sum_{k=1}^{4}\|N_{k}\|_{L_{t}^{\infty}L^{2}} \leq C(t,u_{0},b_{0}).
\end{align}
Actually, by taking $\varepsilon\in (0,1)$, one deduces from \eqref{bnmper8} and the convolution inequality in space variables that
\begin{align}\label{adyhye015}
\|N_{1}\|_{L^{2}}&=\left\|\int_{0}^{t}{K(t-\tau)\ast \Big(b\cdot\nabla\{(u\cdot\nabla)u\}\Big)(\tau)\,d\tau}\right\|_{L^{2}}
\nonumber\\&=\left\|\int_{0}^{t}{\Lambda^{\varepsilon}\nabla K(t-\tau)\ast \Lambda^{-\varepsilon}\Big(b \{(u\cdot\nabla)u\}\Big)(\tau)\,d\tau}\right\|_{L^{2}}
\nonumber\\&\leq C\int_{0}^{t}{\left\|\Lambda^{\varepsilon}\nabla K(t-\tau)\ast \Lambda^{-\varepsilon}\Big(b \{(u\cdot\nabla)u\}\Big)(\tau)\right\|_{L^{2}}\,d\tau}
\nonumber\\&\leq C\int_{0}^{t}{\left\|\Lambda^{\varepsilon}\nabla K(t-\tau)\right\|_{L^{1}} \left\|\Lambda^{-\varepsilon}\Big(b \{(u\cdot\nabla)u\}\Big)(\tau)\right\|_{L^{2}}\,d\tau}
\nonumber\\&\leq C\int_{0}^{t}{\left\|\Lambda^{\varepsilon}\nabla K(t-\tau)\right\|_{L^{1}} \left\| \Big(b \{(u\cdot\nabla)u\}\Big)(\tau)\right\|_{L^{\frac{2}{1+\varepsilon}}}\,d\tau}
\nonumber\\&\leq C\int_{0}^{t}{\left\|\Lambda^{\varepsilon}\nabla K(t-\tau)\right\|_{L^{1}} \|b(\tau)\|_{L^{\infty}} \|u(\tau)\|_{L^{\frac{2}{\varepsilon}}}\|\nabla u(\tau)\|_{L^{2}}\,d\tau}
\nonumber\\&\leq C(t)\int_{0}^{t}{\left\|\Lambda^{\varepsilon}\nabla K(\tau)\right\|_{L^{1}} \,d\tau}\nonumber\\&\leq C(t).
\end{align}
Similarly, one obtains
\begin{align}\label{adyhye016}
\|N_{4}\|_{L^{2}}\leq C(t).
\end{align}
Owing to $\nabla\cdot u=0$, we thus deduce from $\eqref{ZEGMHD}_{1}$ that
$$\nabla p=\frac{\nabla \nabla\cdot(b\cdot\nabla b-u\cdot\nabla u)}{\Delta},$$
which ensures
\begin{align}\label{adyhye017}
\|N_{2}\|_{L^{2}}&=\left\|\int_{0}^{t}{K(t-\tau)\ast \left(b\cdot\nabla\left(\frac{\nabla \nabla\cdot(b\cdot\nabla b-u\cdot\nabla u)}{\Delta}\right)\right)(\tau)\,d\tau}\right\|_{L^{2}}
\nonumber\\&=\left\|\int_{0}^{t}{\Lambda^{\varepsilon}\nabla K(t-\tau)\ast \Lambda^{-\varepsilon}\left(b \frac{\nabla \nabla\cdot(b\cdot\nabla b-u\cdot\nabla u)}{\Delta}\right)(\tau)\,d\tau}\right\|_{L^{2}}
\nonumber\\&\leq C\int_{0}^{t}{\left\|\Lambda^{\varepsilon}\nabla K(t-\tau)\ast \Lambda^{-\varepsilon}\left(b \frac{\nabla \nabla\cdot(b\cdot\nabla b-u\cdot\nabla u)}{\Delta}\right) (\tau)\right\|_{L^{2}}\,d\tau}
\nonumber\\&\leq C\int_{0}^{t}{\left\|\Lambda^{\varepsilon}\nabla K(t-\tau)\right\|_{L^{1}} \left\|\Lambda^{-\varepsilon}\left(b \frac{\nabla \nabla\cdot(b\cdot\nabla b-u\cdot\nabla u)}{\Delta}\right)(\tau)\right\|_{L^{2}}\,d\tau}
\nonumber\\&\leq C\int_{0}^{t}{\left\|\Lambda^{\varepsilon}\nabla K(t-\tau)\right\|_{L^{1}} \left\| b \frac{\nabla \nabla\cdot(b\cdot\nabla b-u\cdot\nabla u)}{\Delta} (\tau)\right\|_{L^{\frac{2}{1+\varepsilon}}}\,d\tau}
\nonumber\\&\leq C\int_{0}^{t}{\left\|\Lambda^{\varepsilon}\nabla K(t-\tau)\right\|_{L^{1}} \|b(\tau)\|_{L^{\infty}}\left\| \frac{\nabla \nabla\cdot(b\cdot\nabla b-u\cdot\nabla u)}{\Delta} (\tau)\right\|_{L^{\frac{2}{1+\varepsilon}}}\,d\tau}
\nonumber\\&\leq C\int_{0}^{t}{\left\|\Lambda^{\varepsilon}\nabla K(t-\tau)\right\|_{L^{1}} \|b(\tau)\|_{L^{\infty}}\left\| (b\cdot\nabla b-u\cdot\nabla u) (\tau)\right\|_{L^{\frac{2}{1+\varepsilon}}}\,d\tau}
\nonumber\\&\leq C\int_{0}^{t}{\left\|\Lambda^{\varepsilon}\nabla K(t-\tau)\right\|_{L^{1}} \|b(\tau)\|_{L^{\infty}} (\|b\|_{L^{\frac{2}{\varepsilon}}}\|\nabla b\|_{L^{2}}+\|u\|_{L^{\frac{2}{\varepsilon}}}\|\nabla u\|_{L^{2}})(\tau)\,d\tau}
\nonumber\\&\leq C(t)\int_{0}^{t}{\left\|\Lambda^{\varepsilon}\nabla K(\tau)\right\|_{L^{1}} \,d\tau}\nonumber\\&\leq C(t).
\end{align}
For the term $N_{3}$, due to $\varepsilon\in (0,1)$, we can also conclude
\begin{align}\label{adyhye018}
\|N_{3}\|_{L^{2}}&=\left\|\int_{0}^{t}{K(t-\tau)\ast \Big(\mathcal{L}\{(u\cdot\nabla)b\}\Big)(\tau)\,d\tau}\right\|_{L^{2}}
\nonumber\\&=\left\|\int_{0}^{t}{\Lambda^{2-\varepsilon} K(t-\tau)\ast \Big(\Lambda^{\varepsilon-2}\mathcal{L}\{(u\cdot\nabla)b\}\Big)(\tau)
\,d\tau}\right\|_{L^{2}}
\nonumber\\&\leq C\int_{0}^{t}{\left\|\Lambda^{2-\varepsilon}  K(t-\tau)\right\|_{L^{1}} \left\|\Big(\Lambda^{\varepsilon-2}\mathcal{L}\{(u\cdot\nabla)b\}\Big)(\tau)\right\|_{L^{2}}\,d\tau}
\nonumber\\&\leq C\int_{0}^{t}{\left\|\Lambda^{2-\varepsilon} K(t-\tau)\right\|_{L^{1}} \left\| \Big(\Lambda^{\varepsilon}\{(u\cdot\nabla)b\}\Big)(\tau)\right\|_{L^{2}}\,d\tau}
\nonumber\\&\leq C\int_{0}^{t}{\left\|\Lambda^{2-\varepsilon} K(t-\tau)\right\|_{L^{1}}  ( \|\Lambda^{\varepsilon}u\|_{L^{\frac{2}{\varepsilon}}}\|\nabla b\|_{L^{\frac{2}{1-\varepsilon}}}+\|u\|_{L^{\frac{2(\varepsilon+1)}{1-\varepsilon}}}
\|\Lambda^{\varepsilon}\nabla b\|_{L^{\frac{\varepsilon+1}{\varepsilon}}}
)(\tau)\,d\tau}
\nonumber\\&\leq C\int_{0}^{t}{\left\|\Lambda^{2-\varepsilon} K(t-\tau)\right\|_{L^{1}}  ( \|\Lambda^{\varepsilon}u\|_{L^{\frac{2}{\varepsilon}}}\|\Lambda^{\varepsilon}j
\|_{L^{2}}+\|u\|_{L^{\frac{2(\varepsilon+1)}{1-\varepsilon}}}
\|\Lambda^{\frac{\varepsilon^{2}+1}{\varepsilon+1}}j\|_{L^{2}}
)(\tau)\,d\tau}
\nonumber\\&\leq C(t)\int_{0}^{t}{\left\|\Lambda^{2-\varepsilon} K(\tau)\right\|_{L^{1}} \,d\tau}\nonumber\\&\leq C(t),
\end{align}
where we have used the fact
$$\left\| \Lambda^{\varepsilon-2}\mathcal{L}f\right\|_{L^{2}}\leq C\left\|\Lambda^{\varepsilon}f\right\|_{L^{2}}.$$
The above estimate can be deduced from the Plancherel Theorem
\begin{align}
\left\| \Lambda^{\varepsilon-2}\mathcal{L}f\right\|_{L^{2}}&=\left(\int_{\mathbb{R}^2}|\xi|^{2\varepsilon-4}
|\widehat{\mathcal{L}f}(\xi)|^{2}\,d\xi\right)^{\frac{1}{2}}\nonumber\\
&=\left(\int_{\mathbb{R}^2}|\xi|^{2\varepsilon-4}\frac{|\xi|^{4}}{g^{2}(\xi)}
|\widehat{f}(\xi)|^{2}\,d\xi\right)^{\frac{1}{2}}\nonumber\\
&\leq \frac{1}{C_{0}}\left(\int_{\mathbb{R}^2}|\xi|^{2\varepsilon}
|\widehat{f}(\xi)|^{2}\,d\xi\right)^{\frac{1}{2}}\nonumber\\
&\leq \frac{1}{C_{0}}\left\|\Lambda^{\varepsilon}f\right\|_{L^{2}}.\nonumber
\end{align}
Putting the estimates \eqref{adyhye015}, \eqref{adyhye016}, \eqref{adyhye017} and \eqref{adyhye018} together yields \eqref{adyhye014}.
For the last term, at this stage, we are only able to show
\begin{eqnarray}\label{adyhye019}
\|N_{5}\|_{L_{t}^{q}L^{2}} \leq C(t,u_{0},b_{0})
\end{eqnarray}
for any $q<\infty$. As a matter of fact, we also conclude
\begin{align} \label{adyhye020}
\|N_{5}\|_{L^{2}}&=\left\|\int_{0}^{t}{K(t-\tau)\ast \Big(\partial_{t}b\cdot\nabla u\Big)(\tau)\,d\tau}\right\|_{L^{2}}
\nonumber\\&\leq C\int_{0}^{t}{\left\|K(t-\tau)\right\|_{L^{2}} \left\|\partial_{t}b\cdot\nabla u(\tau)\right\|_{L^{1}}\,d\tau}
\nonumber\\&\leq C\int_{0}^{t}{\left\|K(t-\tau)\right\|_{L^{2}} \left\|\partial_{t}b(\tau)\right\|_{L^{2}}\left\|\nabla u(\tau)\right\|_{L^{2}}\,d\tau}\nonumber\\&\leq C(t)\int_{0}^{t}{\left\|K(t-\tau)\right\|_{L^{2}} \left\|\partial_{t}b(\tau)\right\|_{L^{2}}\,d\tau}.
\end{align}
Applying the convolution inequality in time variables and using \eqref{adyhye020} as well as \eqref{adyhye009}, we readily obtain that
\begin{align}
\|N_{5}\|_{L_{t}^{q}L^{2}}\leq C\|K(t)\|_{L_{t}^{\frac{2q}{q+2}}L^{2}}\|\partial_{t}b(t)\|_{L_{t}^{2}L^{2}}\leq C(t),\nonumber
\end{align}
where we have used the following fact due to \eqref{bnmper8}
$$\|K(t)\|_{L_{t}^{\frac{2q}{q+2}}L^{2}}\leq C(t).$$
Plugging \eqref{adyhye014} and \eqref{adyhye019} into \eqref{adyhye013} implies for any $q\in[2,\infty)$
\begin{align}
\|\{\mathcal{L}b-
(b\cdot\nabla)u\}(t)\|_{L_{t}^{q}L^2} \leq  C(t).\nonumber
\end{align}
This enables us to show
\begin{align}
\|\mathcal{L}b(t)\|_{L_{t}^{q}L^2}&\leq \|\{\mathcal{L}b-
(b\cdot\nabla)u\}(t)\|_{L_{t}^{q}L^2}+\|(b\cdot\nabla)u(t)\|_{L_{t}^{q}L^2} \nonumber\\&
 \leq \|\{\mathcal{L}b-
(b\cdot\nabla)u\}(t)\|_{L_{t}^{q}L^2}+\|b(t)\|_{L_{t}^{\infty}L^{\infty}}
\|\nabla u(t)\|_{L_{t}^{q}L^2}
\nonumber\\&\leq  C(t),\nonumber
\end{align}
which is the desired bound \eqref{adyhye004}. With \eqref{adyhye004} in hand, we are in a position to prove \eqref{adfkh1}.
Actually, coming back to \eqref{adyhye020}, we have
\begin{align}
\|N_{5}\|_{L^{2}}\leq C(t)\int_{0}^{t}{\left\|K(t-\tau)\right\|_{L^{2}} \left\|\partial_{t}b(\tau)\right\|_{L^{2}}\,d\tau}.\nonumber
\end{align}
This along with the convolution inequality in time variables yields
\begin{align}\label{adyhye021}
\|N_{5}\|_{L_{t}^{\infty}L^{2}} \leq C\|K(t)\|_{L_{t}^{\frac{q}{q-1}}L^{2}}\|\partial_{t}b(t)\|_{L_{t}^{q}L^{2}}
 \leq C(t),
\end{align}
where we have used the following fact due to \eqref{bnmper8} again
$$\|K(t)\|_{L_{t}^{\frac{q}{q-1}}L^{2}}\leq C(t).$$
Putting \eqref{adyhye014} and \eqref{adyhye021} into \eqref{adyhye013} gives the bound
\begin{align}
\|\{\mathcal{L}b-
(b\cdot\nabla)u\}(t)\|_{L_{t}^{\infty}L^2} \leq  C(t).\nonumber
\end{align}
This also allows us to derive
\begin{align}
\|\mathcal{L}b(t)\|_{L_{t}^{\infty}L^2}&\leq \|\{\mathcal{L}b-
(b\cdot\nabla)u\}(t)\|_{L_{t}^{\infty}L^2}+\|(b\cdot\nabla)u(t)\|_{L_{t}^{\infty}L^2} \nonumber\\&
 \leq \|\{\mathcal{L}b-
(b\cdot\nabla)u\}(t)\|_{L_{t}^{\infty}L^2}+\|b(t)\|_{L_{t}^{\infty}L^{\infty}}
\|\nabla u(t)\|_{L_{t}^{\infty}L^2}
\nonumber\\&\leq  C(t),\nonumber
\end{align}
which is \eqref{adfkh1}. Therefore, we complete the proof of Lemma \ref{vsadlea11}.
\end{proof}

\vskip .1in
For the combined quantity $\mathcal{L}b-(b\cdot\nabla)u$ itself, we are able to show that it actually belongs to $L_{t}^{\infty}H^{r}$ for any $r\in [0,1)$. Precisely, it reads as follows.
\begin{lemma}  \label{vsg82}
Let $(u_{0},b_{0})$ satisfy the conditions stated in Theorem \ref{Th2}, then it holds for any $r\in[0,1)$
\begin{eqnarray}\label{adyhye022}
\|\Lambda^{r}\{\mathcal{L}b-
(b\cdot\nabla)u\}(t)\|_{L^{2}} \leq C(t,u_{0},b_{0}).
\end{eqnarray}
\end{lemma}

\begin{proof}
According to \eqref{adyhye012}, we may get
\begin{align}
\Lambda^{r}\{\mathcal{L}b-
(b\cdot\nabla)u\}(t)=K(t)\ast \Lambda^{r}\{\mathcal{L}b_{0}-
(b_{0}\cdot\nabla)u_{0}\}+\sum_{k=1}^{5}\widetilde{N}_{k},\nonumber
\end{align}
where the five terms are given by
$$\quad \quad \ \ \widetilde{N}_{1}=\int_{0}^{t}{K(t-\tau)\ast \Lambda^{r}\Big(b\cdot\nabla\{(u\cdot\nabla)u\}\Big)(\tau)\,d\tau},$$
$$ \, \widetilde{N}_{2}=\int_{0}^{t}{K(t-\tau)\ast \Lambda^{r}\Big(b\cdot\nabla(\nabla p)\Big)(\tau)\,d\tau},$$
$$ \quad \ \ \ \widetilde{N}_{3}=-\int_{0}^{t}{K(t-\tau)\ast \Lambda^{r}\Big(\mathcal{L}\{(u\cdot\nabla)b\}\Big)(\tau)\,d\tau},$$
$$\qquad  \ \ \ \, \widetilde{N}_{4}=-\int_{0}^{t}{K(t-\tau)\ast \Lambda^{r}\Big(b\cdot\nabla\{(b\cdot\nabla)b\}\Big)(\tau)\,d\tau}, $$
$$\ \widetilde{N}_{5}=-\int_{0}^{t}{K(t-\tau)\ast \Lambda^{r}\Big(\partial_{t}b\cdot\nabla u\Big)(\tau)\,d\tau}.$$
We thus have
\begin{align}\label{adyhye023}
\|\Lambda^{r}\{\mathcal{L}b-
(b\cdot\nabla)u\}(t)\|_{L^2}&\leq \|K(t)\ast \Lambda^{r}\{\mathcal{L}b_{0}-
(b_{0}\cdot\nabla)u_{0}\}\|_{L^{2}} +\sum_{k=1}^{5}\|\widetilde{N}_{k}\|_{L^2}.
\end{align}
Due to $\Lambda^{r}\{\mathcal{L}b_{0}-
(b_{0}\cdot\nabla)u_{0}\}\in L^{2}(\mathbb{R}^{2})$, according to \eqref{ydvba11}, we get
$$\|K(t)\ast \Lambda^{r}\{\mathcal{L}b_{0}-
(b_{0}\cdot\nabla)u_{0}\}\|_{L^{2}}\leq C.$$
Obviously, it follows from $\nabla\cdot u=\nabla\cdot b=0$ that
\begin{align}
 b\cdot\nabla\{(u\cdot\nabla)u\}&= b_{i}\partial_{i}(u_{j}\partial_{j}u_{k})\nonumber\\
 &= \partial_{i}\{b_{i}(u_{j}\partial_{j}u_{k})\}\nonumber\\
 &= \partial_{i}\{b_{i}\partial_{j}(u_{j}u_{k})\}\nonumber\\
 &= \partial_{i}\partial_{j}\{b_{i}(u_{j}u_{k})\}-\partial_{i}
 \{\partial_{j}b_{i}(u_{j}u_{k})\},\nonumber
\end{align}
where we have adopted the Einstein summation convention.
In what follows, taking $\varepsilon\in (0,1-r)$, we get from \eqref{bnmper8} and the convolution inequality in space variables
\begin{align}
\|\widetilde{N}_{1}\|_{L^{2}}&=\left\|\int_{0}^{t}{K(t-\tau)\ast \Lambda^{r} \Big(\partial_{i}\partial_{j}\{b_{i}(u_{j}u_{k})\}-\partial_{i}\{\partial_{j}b_{i}
(u_{j}u_{k})\}\Big)(\tau)\,d\tau}\right\|_{L^{2}}
\nonumber\\&\leq\left\|\int_{0}^{t}{K(t-\tau)\ast \Lambda^{r} \Big(\partial_{i}\partial_{j}\{b_{i}(u_{j}u_{k})\}\Big)(\tau)\,d\tau}\right\|_{L^{2}}
\nonumber\\&\quad+
\left\|\int_{0}^{t}{K(t-\tau)\ast \Lambda^{r} \Big(\partial_{i}\{\partial_{j}b_{i}
(u_{j}u_{k})\}\Big)(\tau)\,d\tau}\right\|_{L^{2}}
\nonumber\\&\leq\left\|\int_{0}^{t}{\Lambda^{r+1+\varepsilon}K(t-\tau)\ast \Lambda^{-1-\varepsilon} \Big(\partial_{i}\partial_{j}\{b_{i}(u_{j}u_{k})\}\Big)(\tau)\,d\tau}\right\|_{L^{2}}
\nonumber\\&\quad+
\left\|\int_{0}^{t}{\Lambda^{r+1+\varepsilon}K(t-\tau)\ast \Lambda^{-1-\varepsilon} \Big(\partial_{i}\{\partial_{j}b_{i}
(u_{j}u_{k})\}\Big)(\tau)\,d\tau}\right\|_{L^{2}}
\nonumber\\&\leq \int_{0}^{t}{\left\|\Lambda^{r+1+\varepsilon} K(t-\tau)\ast \Lambda^{-1-\varepsilon} \Big(\partial_{i}\partial_{j}\{b_{i}(u_{j}u_{k})\}\Big)(\tau)\right\|_{L^{2}}\,d\tau}
\nonumber\\&\quad+
\int_{0}^{t}{\left\|\Lambda^{r+1+\varepsilon}K(t-\tau)\ast \Lambda^{-1-\varepsilon} \Big(\partial_{i}\{\partial_{j}b_{i}
(u_{j}u_{k})\}\Big)(\tau)\right\|_{L^{2}}\,d\tau}
\nonumber\\&\leq C\int_{0}^{t}{\left\|\Lambda^{r+1+\varepsilon} K(t-\tau)\right\|_{L^{1}} \left\|\Lambda^{1-\varepsilon}(b_{i}u_{j}u_{k})(\tau)\right\|_{L^{2}}\,d\tau}
\nonumber\\&\quad+C\int_{0}^{t}{\left\|\Lambda^{r+1+\varepsilon} K(t-\tau)\right\|_{L^{1}} \left\|\Lambda^{-\varepsilon}(\partial_{j}b_{i}
u_{j}u_{k})(\tau)\right\|_{L^{2}}\,d\tau}
\nonumber\\&\leq C\int_{0}^{t}{\left\|\Lambda^{r+1+\varepsilon} K(t-\tau)\right\|_{L^{1}}  (\|\Lambda^{1-\varepsilon}b\|_{L^{\frac{2}{1-\varepsilon}}}\|u\|_{L^{\frac{4}{\varepsilon}}}^{2}
+\|\Lambda^{1-\varepsilon}u\|_{L^{\frac{2}{1-\varepsilon}}}\|u\|_{L^{\frac{4}{\varepsilon}}}
\|b\|_{L^{\frac{4}{\varepsilon}}})\,d\tau}
\nonumber\\&\quad+C\int_{0}^{t}{\left\|\Lambda^{r+1+\varepsilon} K(t-\tau)\right\|_{L^{1}} \left\|(\partial_{j}b_{i}
u_{j}u_{k})(\tau)\right\|_{L^{\frac{2}{1+\varepsilon}}}\,d\tau}
\nonumber\\&\leq C\int_{0}^{t}{\left\|\Lambda^{r+1+\varepsilon} K(t-\tau)\right\|_{L^{1}}  (\|\nabla b\|_{L^{2}}\|u\|_{L^{\frac{4}{\varepsilon}}}^{2}
+\|\nabla u\|_{L^{2}}\|u\|_{L^{\frac{4}{\varepsilon}}}
\|b\|_{L^{\frac{4}{\varepsilon}}})(\tau)\,d\tau}
\nonumber\\&\quad+C\int_{0}^{t}{\left\|\Lambda^{r+1+\varepsilon} K(t-\tau)\right\|_{L^{1}} \|\nabla b\|_{L^{2}}\|u\|_{L^{\frac{4}{\varepsilon}}}^{2}(\tau)\,d\tau}
\nonumber\\&\leq C(t)\int_{0}^{t}{\left\|\Lambda^{r+1+\varepsilon}  K(\tau)\right\|_{L^{1}} \,d\tau}\nonumber\\&\leq C(t).\nonumber
\end{align}
Similarly, we are able to show
\begin{align}
\|\widetilde{N}_{2}\|_{L^{2}}&=\left\|\int_{0}^{t}{K(t-\tau)\ast \Lambda^{r}\left(b\cdot\nabla\left(\frac{\nabla \nabla\cdot(b\cdot\nabla b-u\cdot\nabla u)}{\Delta}\right)\right)(\tau)\,d\tau}\right\|_{L^{2}}
\nonumber\\&=\left\|\int_{0}^{t}{\Lambda^{r+1+\varepsilon}K(t-\tau)\ast \Lambda^{-1-\varepsilon}\nabla\left(b \frac{\nabla \nabla\cdot(b\cdot\nabla b-u\cdot\nabla u)}{\Delta}\right)(\tau)\,d\tau}\right\|_{L^{2}}
\nonumber\\&\leq C\int_{0}^{t}{\left\|\Lambda^{r+1+\varepsilon} K(t-\tau)\ast \Lambda^{-1-\varepsilon}\nabla\left(b \frac{\nabla \nabla\cdot(b\cdot\nabla b-u\cdot\nabla u)}{\Delta}\right) (\tau)\right\|_{L^{2}}\,d\tau}
\nonumber\\&\leq C\int_{0}^{t}{\left\|\Lambda^{r+1+\varepsilon} K(t-\tau)\right\|_{L^{1}} \left\|\Lambda^{-\varepsilon}\left(b \frac{\nabla \nabla\cdot(b\cdot\nabla b-u\cdot\nabla u)}{\Delta}\right)(\tau)\right\|_{L^{2}}\,d\tau}
\nonumber\\&\leq C\int_{0}^{t}{\left\|\Lambda^{r+1+\varepsilon}K(t-\tau)\right\|_{L^{1}} \left\| b \frac{\nabla \nabla\cdot(b\cdot\nabla b-u\cdot\nabla u)}{\Delta} (\tau)\right\|_{L^{\frac{2}{1+\varepsilon}}}\,d\tau}
\nonumber\\&\leq C\int_{0}^{t}{\left\|\Lambda^{r+1+\varepsilon} K(t-\tau)\right\|_{L^{1}} \|b(\tau)\|_{L^{\infty}}\left\| (b\cdot\nabla b-u\cdot\nabla u) (\tau)\right\|_{L^{\frac{2}{1+\varepsilon}}}\,d\tau}
\nonumber\\&\leq C\int_{0}^{t}{\left\|\Lambda^{r+1+\varepsilon} K(t-\tau)\right\|_{L^{1}} \|b(\tau)\|_{L^{\infty}} (\|b\|_{L^{\frac{2}{\varepsilon}}}\|\nabla b\|_{L^{2}}+\|u\|_{L^{\frac{2}{\varepsilon}}}\|\nabla u\|_{L^{2}})(\tau)\,d\tau}
\nonumber\\&\leq C(t)\int_{0}^{t}{\left\|\Lambda^{r+1+\varepsilon} K(\tau)\right\|_{L^{1}} \,d\tau}\nonumber\\&\leq C(t),\nonumber
\end{align}
which also implies
$$\|\widetilde{N}_{4}\|_{L^{2}}\leq C(t).$$
Moreover, we obtain
\begin{align}
\|\widetilde{N}_{3}\|_{L^{2}}&=\left\|\int_{0}^{t}{K(t-\tau)\ast \Lambda^{r} \Big(\mathcal{L}\{(u\cdot\nabla)b\}\Big)(\tau)\,d\tau}\right\|_{L^{2}}
\nonumber\\&=\left\|\int_{0}^{t}{\Lambda^{r+1+\varepsilon} K(t-\tau)\ast \Big(\Lambda^{-\varepsilon-1}\mathcal{L}\{(u\cdot\nabla)b\}\Big)(\tau)
\,d\tau}\right\|_{L^{2}}
\nonumber\\&\leq C\int_{0}^{t}{\left\|\Lambda^{r+1+\varepsilon}  K(t-\tau)\right\|_{L^{1}} \left\|\Big(\Lambda^{1-\varepsilon-2}\mathcal{L}\{(u\cdot\nabla)b\}\Big)(\tau)\right\|_{L^{2}}\,d\tau}
\nonumber\\&\leq C\int_{0}^{t}{\left\|\Lambda^{r+1+\varepsilon} K(t-\tau)\right\|_{L^{1}} \left\| \Big(\Lambda^{1-\varepsilon}\{(u\cdot\nabla)b\}\Big)(\tau)\right\|_{L^{2}}\,d\tau}
\nonumber\\&\leq C\int_{0}^{t}{\left\|\Lambda^{r+1+\varepsilon}K(t-\tau)\right\|_{L^{1}}
C(\tau)\,d\tau}
\nonumber\\&\leq C(t)\int_{0}^{t}{\left\|\Lambda^{r+1+\varepsilon} K(\tau)\right\|_{L^{1}} \,d\tau}\nonumber\\&\leq C(t),\nonumber
\end{align}
where we have used the following estimate
\begin{align}
\left\| \Lambda^{1-\varepsilon}\{(u\cdot\nabla)b\}(t)\right\|_{L^{2}}&\leq C\|\Lambda^{1-\varepsilon}u\|_{L^{\frac{2}{1-\varepsilon}}}\|\nabla b\|_{L^{\frac{2}{\varepsilon}}}+C\|u\|_{L^{\frac{2(2-\varepsilon)}{\varepsilon}}}
\|\Lambda^{1-\varepsilon}\nabla b\|_{L^{\frac{2-\varepsilon}{1-\varepsilon}}}\nonumber\\&\leq
C\|\omega\|_{L^{2}}
\|\Lambda^{1-\varepsilon}j\|_{L^{2}}
+C\|u\|_{L^{\frac{2(2-\varepsilon)}{\varepsilon}}}
\|\Lambda^{\frac{2-2\varepsilon+\varepsilon^{2}}{2-\varepsilon}}j\|_{L^{2}}
\nonumber\\&\leq
C(t).\nonumber
\end{align}
We deduce from \eqref{adyhye005}, \eqref{adfkh1} and \eqref{adyhye007} that
\begin{align}\label{adyhye023}
\|\partial_{t}b\|_{L_{t}^{\infty}L^{2}} \leq \|\mathcal{L}b\|_{L_{t}^{\infty}L^{2}}+\|f\|_{L_{t}^{\infty}L^{2}}\leq
C(t).
\end{align}
According to \eqref{bnmper8} and \eqref{adyhye023}, we finally get
\begin{align}
\|N_{5}\|_{L^{2}}&=\left\|\int_{0}^{t}\Lambda^{r}{K(t-\tau)\ast  \Big(\partial_{t}b\cdot\nabla u\Big)(\tau)\,d\tau}\right\|_{L^{2}}
\nonumber\\&\leq C\int_{0}^{t}{\left\|\Lambda^{r} K(t-\tau)\right\|_{L^{2}} \left\|\partial_{t}b\cdot\nabla u(\tau)\right\|_{L^{1}}\,d\tau}
\nonumber\\&\leq C\int_{0}^{t}{\left\|\Lambda^{r} K(t-\tau)\right\|_{L^{2}} \left\|\partial_{t}b(\tau)\right\|_{L^{2}}\left\|\nabla u(\tau)\right\|_{L^{2}}\,d\tau}\nonumber\\&\leq C(t)\int_{0}^{t}{\left\|\Lambda^{r} K(t-\tau)\right\|_{L^{2}}\,d\tau}
\nonumber\\&\leq C(t).\nonumber
\end{align}
Putting the above five estimates into \eqref{adyhye023}, we arrive at the desired bound \eqref{adyhye022}. Therefore, we conclude the proof of Lemma \ref{vsg82}.
\end{proof}

\vskip .2in
\textbf{Acknowledgements.}
Ye was supported by the Qing Lan Project of Jiangsu Province. Zhao was partially supported by the National Natural Science Foundation of China (No. 11901165, No. 11971446).

\vskip .3in


\begin{thebibliography}{00} \frenchspacing
\bibitem{Agelas}
L. Agelas, \emph{Global regularity for logarithmically critical 2D MHD equations with zero viscosity}, Monatsh. Math.  \textbf{181}  (2016), 245--266.

\bibitem{BCD}
H. Bahouri, J.-Y. Chemin, R. Danchin: \emph{Fourier Analysis and Nonlinear
Partial Differential Equations}, Grundlehren der mathematischen
Wissenschaften, 343, Springer (2011).


\bibitem{CW2011}
C. Cao, J. Wu, \emph{Global regularity for the 2D MHD equations with mixed
partial dissipation and magnetic diffusion}, Adv. Math. \textbf{226} (2011),
1803--1822.

\bibitem{CWYSiam14}
C. Cao, J. Wu, B. Yuan, \emph{The 2D incompressible magnetohydrodynamics
equations with only magnetic diffusion}, SIAM J. Math. Anal. \textbf{{46}} (2014), 588--602.

\bibitem{Chamorrolr}
D. Chamorro, P.G. Lemarie-Rieusset, \emph{Quasi-geostrophic equation, nonlinear Bernstein inequalities and
$\alpha$-stable processes}, Rev. Mat. Iberoam. \textbf{28} (2012), 1109--1122.

\bibitem{Davidson01}
P.A. Davidson, \emph{An Introduction to Magnetohydrodynamics}, Cambridge University Press, Cambridge, England, 2001.

\bibitem{FNZ14MM}
J. Fan, H. Malaikah, S. Monaquel, G. Nakamura, Y. Zhou, \emph{Global Cauchy problem of 2D generalized MHD equations}, Monatsh. Math. \textbf{{175}} (2014), 127--131.

\bibitem{JZ31114}
Q. Jiu, J. Zhao, \emph{A remark on global regularity of 2D generalized
magnetohydrodynamic equations}, J. Math. Anal. Appl. \textbf{{412}} (2014),
478--484.

\bibitem{JZ31115}
Q. Jiu, J. Zhao, \emph{Global regularity of 2D generalized MHD equations with magnetic diffusion}, Z. Angew. Math. Phys. \textbf{ {66} }(2015), 677--687.

\bibitem{kaPonce}
T. Kato, G. Ponce, \emph{Commutator estimates and the Euler and the Navier-Stokes equations},
Comm. Pure Appl. Math. \textbf{41} (1988), 891--907.

\bibitem{LZ}
Z. Lei, Y. Zhou, \emph{BKM's criterion and global weak solutions for
magnetohydrodynamics with zero viscosity}, Discrete Contin. Dyn.
Syst. \textbf{25} (2009), 575--583.

\bibitem{PF}
E. Priest, T. Forbes, \emph{Magnetic reconnection, MHD theory and
Applications}, Cambridge University Press, Cambridge, 2000.

\bibitem{ST}
M. Sermange, R. Temam, \emph{Some mathematical questions related to the
MHD equations}, Comm. Pure Appl. Math. \textbf{36} (1983), 635--664.

\bibitem{TTao}
T. Tao, \emph{Global regularity for a logarithmically supercritical hyperdissipative Navier-Stokes equation}, Anal. PDE \textbf{2} (2009), 361--366.


\bibitem{TYZ}
C. V. Tran, X. Yu, Z. Zhai, \emph{Note on solution regularity of the
generalized magnetohydrodynamic equations with partial dissipation},
Nonlinear Anal. \textbf{85} (2013), 43--51.

\bibitem{TYZ113}
C. V. Tran, X. Yu, Z. Zhai, \emph{On global regularity of 2D generalized
magnetodydrodynamics equations}, J. Differential. Equations \textbf{{254}}
(2013), 4194--4216.

\bibitem{Wu2003}
J. Wu, \emph{The generalized MHD equations}, J. Differential. Equations, \textbf{195} (2003), 284--312.

\bibitem{Wu2011}
J. Wu, \emph{Global regularity for a class of generalized
magnetohydrodynamic equations}, J. Math. Fluid Mech, \textbf{13} (2011), 295--305.

\bibitem{Y3efg5}
K. Yamazaki, \emph{On the global regularity of two-dimensional generalized
magnetohydrodynamics system}, J. Math. Anal. Appl. \textbf{{416}} (2014), 99--111.

\bibitem{Yasd14a}
K. Yamazaki, \emph{Global regularity of the logarithmically supercritical MHD system with zero diffusivity}, Appl. Math. Lett. \textbf{29} (2014), 46--51.

\bibitem{Yamazaki18}
K. Yamazaki, \emph{Global regularity of logarithmically supercritical MHD system with improved logarithmic powers}, Dyn. Partial Differ. Equ. \textbf{15} (2018), 147--173.

\bibitem{Yejee18}
Z. Ye, \emph{Remark on the global regularity of 2D MHD equations with almost Laplacian magnetic diffusion}, J. Evol. Equ. \textbf{18} (2018), 821--844.


\bibitem{YX2014NA}
Z. Ye, X. Xu, \emph{Global regularity of the two-dimensional
incompressible generalized magnetohydrodynamics system}, Nonlinear
Anal. \textbf{{100}} (2014), 86--96.


\bibitem{Yeaam18}
Z. Ye, \emph{Some new regularity criteria for the 2D
Euler-Boussinesq equations via the temperature}, Acta Appl Math \textbf{157} (2018), 141--169.

\bibitem{YB2014JMAA}
B. Yuan, L. Bai, \emph{Remarks on global regularity of 2D generalized MHD
equations}, J. Math. Anal. Appl. \textbf{{413}} (2014), 633--640.

\bibitem{YZhao16}
B. Yuan, J. Zhao,
\emph{Global regularity of 2D almost resistive MHD Equations},Nonlinear Anal. Real World Appl. \textbf{41} (2018), 53--65.


\bibitem{Zhao22}
J. Zhao, \emph{Global regularity for solutions to 2D generalized MHD equations with multiple exponential upper bound uniformly in time}, J. Math. Anal. Appl. \textbf{514} (2022), no. 1, Paper No. 126306.

\end{thebibliography}
\end{document}